\documentclass[reqno,11pt]{amsart}
\usepackage{amsmath,amssymb,mathrsfs,amsthm,amsfonts} 
\usepackage[usenames,dvipsnames]{xcolor}
\usepackage{hyperref}
\usepackage{comment}
\usepackage{stmaryrd}
\usepackage{pgfplots}
\usetikzlibrary{arrows}
\hypersetup{%
	colorlinks=true, linkcolor=blue,
	citecolor=ForestGreen
}
\usepackage[paper=letterpaper,margin=1in]{geometry}
\usepackage{graphicx,tikz}
\usepackage{bm}
\usepackage{marginnote}
\usepackage{bbm}
\usepackage{caption}
\usepackage{tikz}
\usepackage{enumerate}
\usetikzlibrary{matrix,graphs,arrows,positioning, shapes, calc,decorations.markings,decorations.pathmorphing,shapes.symbols,hobby}
\usepackage{dsfont}
\usepackage{todonotes}

\newtheorem{theorem}{Theorem}[section]
\newtheorem{lemma}[theorem]{Lemma}

\newtheorem{proposition}[theorem]{Proposition}
\newtheorem{corollary}[theorem]{Corollary}
\theoremstyle{definition}
\newtheorem{definition}[theorem]{Definition}

\numberwithin{example}{section}
\newtheorem{remark}[theorem]{Remark}
\numberwithin{equation}{section}
\allowdisplaybreaks
\usepackage{acronym}

\newcommand{\cc}{CC}
\newcommand{\cnc}{CNC}

\newcommand{\E}{\mathbb{E}}
\newcommand{\N}{\mathbb{N}}
\renewcommand{\P}{\mathbb{P}}
\renewcommand{\Pr}{\mathbb{P}}	
\newcommand{\Ex}{\mathbb{E}}	
\renewcommand{\d}{\mathrm{d}}	
\newcommand{\ind}{\mathds{1}}	

\newcommand{\cM}{\mathcal{M}}


\newcommand{\Con}{\mathrm{C}} 

\newcommand{\R}{\mathbb{R}} 





%



\DeclareMathOperator\supp{supp}

\DeclareMathOperator\PT{Pt}
\DeclareMathOperator\Em{Em}


\newcommand{\til}{\widetilde}
\renewcommand{\bar}{\overline}
\renewcommand{\hat}{\widehat}

\usepackage{graphicx}



\title[Large deviation principle for random permutations]{Large deviation principle for random permutations}

\author[J.\ Borga]{Jacopo Borga}
\address{J.\ Borga,
	Department of Mathematics, Stanford University,
	\newline\hphantom{\quad \ \ J. \ Borga}
	450 Jane Stanford Way,
	Stanford, CA 94305, United States of America
}
\email{jborga@stanford.edu}

\author[S.\ Das]{Sayan Das}
\address{S.\ Das,
	Department of Mathematics, Columbia University,
	\newline\hphantom{\quad \ \ S. Das}
	2990 Broadway, New York, NY 10027, United States of America
}
\email{sayan.das@columbia.edu}

\author[S.\ Mukherjee]{Sumit Mukherjee}
\address{S.\ Mukherjee,
	Department of Statistics, Columbia University,
	\newline\hphantom{\quad \ \ S. Mukherjee}
	1255 Amsterdam Avenue, New York, NY 10027, United States of America.
}
\email{sm3949@columbia.edu}
\author[P.\ Winkler]{Peter Winkler}
\address{P.\ Winkler,
	Department of Mathematics, Dartmouth College,
	\newline\hphantom{\quad \ \ P. Winkler}
	27 N. Main Street, Hanover, NH 03755, United States of America.
}
\email{peter.winkler@dartmouth.edu}

\begin{document}
	\begin{abstract} 

		We derive a large deviation principle for random permutations induced by probability measures of the unit square, called  \emph{permutons}. These permutations are called  $\mu$-\emph{random permutations}. We also introduce and study a new general class of models of random permutations, called \emph{Gibbs permutation models}, which combines and generalizes $\mu$-random permutations and the celebrated \emph{Mallows model} for permutations. Most of our results hold in the general setting of Gibbs permutation models.
		
		We apply the tools that we develop to the case of $\mu$-random permutations conditioned to have an atypical proportion of patterns. Several results are made more concrete in the specific case of inversions. For instance, we prove the existence of at least one phase transition for a generalized version of the Mallows model where the base measure is non-uniform. This is in contrast with the results of Starr (2009, 2018) on the (standard) Mallows model, where the absence of phase transition, i.e., phase uniqueness, was proven.
		
		Our results naturally lead us to investigate a new notion of permutons, called \emph{conditionally constant} permutons, which generalizes both \emph{pattern-avoiding} and \emph{pattern-packing} permutons. We describe some properties of conditionally constant permutons with respect to inversions. The study of \emph{conditionally constant} permutons for general patterns seems to be a new challenging problem.
	\end{abstract}
	
	\subjclass[2020]{05A05, 60C05, 60F10}
	\keywords{Random permutations, patterns, permutons, Gibbs measures}

	\maketitle

\section{Introduction}

Studying random permutations is of central importance in probabilistic combinatorics, with the uniform distribution on the symmetric group receiving the most attention (\cite{diaconis1988group}). In the context of large deviations, a large deviation principle (henceforth referred to as LDP) for a uniformly random permutation was first derived in \cite{Trashorras}. More recently,  \cite{Mukherjee} and \cite{Winkler}  gave independent proofs of this important result, and use it to study behavior of exponential tilts, and uniformly random permutations under rare events. 

In this paper we study an LDP for $\mu$-random permutations, where $\mu$ is a probability distribution on $[0,1]^2$ with continuous marginals, {i.e.\ marginals with continuous cumulative distribution functions.} The notion of $\mu$-random permutations was introduced in \cite{Hoppen}, and we recall it in the next definition. 

We denote the set of all permutations of size $n$ by $S_n$ and the infinite set of all permutations of finite size by $S$.

\begin{definition}\label{def:mu_random}

Given $n$ points $({\bf x},{\bf y})=\big((x_i,y_i)\big)_{1\le i\le n} \in [0,1]^2$, with $x_i\neq x_j$ and $y_i\neq y_j$ for all $i\neq j$, we define a permutation $\pi_{{\bf x},{\bf y}}\in S_n$ associated to it as follows. 
Let $(x_{(i)},y_{(i)})_{1\le i\le n}$ be the \emph{$x$-reordering} of  $(\bf{x},\bf{y})$, i.e.\ the unique reordering of the sequence $\big((x_i,y_i)\big)_{1\le i\le n}$ such that
$x_{(1)}<\cdots<x_{(n)}$.
The values $(y_{(1)},\ldots,y_{(n)})$ are then in the same
relative order as the values of a unique permutation $\pi_{{\bf x},{\bf y}}\in S_n$, called the  \emph{permutation induced by} $(\bf{x},\bf{y})$. 

Let $\mathcal{M}$ denote the set of all probability measures on $[0,1]^2$ with continuous marginals. For any $\mu\in \cM$ and $n\in\N$,  define the $\mu$-\emph{random permutation} $\pi_{n,\mu}$ of size $n$ as 
\begin{equation}\label{eq:muranper}
	\pi_{n,\mu}:=\pi_{{\bf X},{\bf Y}}
\end{equation}
where $({\bf X},{\bf Y})=((X_1,Y_1),\ldots,(X_n,Y_n))\stackrel{i.i.d.}{\sim}\mu$. Since $\mu$ has continuous marginals, there are no ties between $(X_1,\cdots,X_n)$ and $(Y_1,\cdots,Y_n)$ with probability $1$. It follows that $\pi_{n,\mu}$ is almost surely well defined. 
\end{definition}

\begin{remark}
	In \cite{Hoppen} the authors assume that $\mu$ belongs to $\widetilde{\cM}$, the set of measures $\mu\in \cM$ with uniform marginals. In this case we call $\mu$ a \emph{permuton}, as first done in \cite{glebov2015finitely}. We point out that for the purpose of defining $\pi_{n,\mu}$, this assumption can be made without loss of generality. To see this, let $F_1,F_2$ denote the marginal cumulative distribution functions of $\mu\in \cM$. Then with $(X,Y)\sim \mu$, the random vector $(F_1(X), F_2(Y))$ is supported on the unit square and has uniform marginals. Also {note that the marginal distribution functions $F_1$ and $F_2$ are strictly increasing $\mu^{\otimes 2}$-almost surely, i.e.\ if $(X_1,Y_1), (X_2,Y_2)\stackrel{i.i.d.}{\sim}\mu$, then $X_1<X_2$ implies $F_1(X_1)<F_1(X_2)$ with $\mu^{\otimes 2}$-probability 1, and similarly for $F_2$. Consequently,}  one can see that $\pi_{n,\mu}\stackrel{d}{=}\pi_{n,\tilde{\mu}}$, where $\tilde{\mu}$ is the law of $(F_1(X),F_2(Y))$. We denote by $\mathcal{O}$ the map 
	\begin{align}\label{eq:map_proj}
		\mathcal{O}:\mathcal{M}&\rightarrow \widetilde{\mathcal{M}}\\
		\mu&\mapsto \widetilde{\mu}.\nonumber
	\end{align}
	Then $\mathcal{O}$ is onto, but not 1-1.
	Thus one can usually assume that $\mu$ is supported on $[0,1]^2$, and has uniform marginals. We do not, however, make this assumption, since we will need to consider  probability measures $\mu\in \cM$ with continuous (but possibly non-uniform) marginals (see for instance Remark \ref{rem:rets} below).
\end{remark}

\subsection{Permutation limit theory and permutons} To study large deviation for permutations, it is necessary to embed permutations of all sizes in a common topological space. This was first done in \cite{Hoppen}, where the authors study a notion of permutation limits, motivated by the study of property testing of permutations (\cite{hoppen2}) and the notion of dense graph limits or graphons. In this section we give a brief introduction to the permutation limit theory (see
e.g.\ \cite[Section 2.1]{borga2021random} for more detail).

\begin{definition}
	Let $\mathcal{P}$ denote the space of all probability measures on $[0,1]^2$, and note that $\widetilde{\mathcal{M}}\subset \mathcal{M}\subset
	\mathcal{P}$.  Given a permutation $\pi\in S_n$, the associated \emph{empirical measure} $\Em(\pi)\in \mathcal{P}$ is defined as
	\begin{align}\label{def:emp}
		\Em(\pi):=\frac1n\sum_{i=1}^n \delta_{\left(\frac{i}{n},\frac{\pi(i)}{n}\right)},
	\end{align}
	where $\delta_{x}$ denotes the Dirac probability measure at $x\in[0,1]^2$.
\end{definition}

Given a pattern $\sigma\in S_k$ and a probability measure $\nu\in \mathcal{P}$, denote the \emph{pattern density} of $\sigma$ in the measure $\nu$ by setting
\begin{align}\label{def:ts}
	t_\sigma(\nu):=\nu^{\otimes k}(h_\sigma)=\int_{[0,1]^{2k}} h_\sigma\Big[(x_1,y_1),\cdots,(x_k,y_k)\Big] d\nu(x_1,y_1)\cdots d\nu(x_k,y_k),
\end{align}
where
\begin{align}\label{def:hs1}
	\notag &h_\sigma\big((x_1,y_1),\cdots,(x_k,y_k)\big):=\mathds{1}\{\pi_{(x_1,\ldots,x_k),(y_1,\ldots,y_k)}=\sigma\}\\
	=
&\sum_{\tau\in S_k} \mathds{1}\left\{ x_{\tau(1)}<x_{\tau(2)}<\ldots<x_{\tau(k)}\right\}   \mathds{1}\left\{y_{\tau(\sigma^{-1}(1))}<y_{\tau(\sigma^{-1}(2))}<\ldots<y_{\tau(\sigma^{-1}(k))}\right\}.
\end{align}
The combinatorial and probabilistic properties of pattern densities are well studied in the literature (see \cite{goldstein2005berry,bona,janson,janson2020patterns,borga2021asymptotic} and the references therein). Note that $t_\sigma(\nu)\in [0,1]$, for all $\sigma\in S_k$ and all $\nu\in \mathcal{P}$. We can naturally extend the mapping $t_\sigma(\cdot)$ to permutations (instead of probability measures) by setting  for a permutation $\pi\in S_n$ and a pattern $\sigma\in S_k$, 
\begin{equation}\label{eq:extend_map}
	t_\sigma(\pi):=t_\sigma(\Em(\pi)).
\end{equation}

Suppose $\pi_n$ is a sequence of permutations with $\pi_n\in S_n$. We say that the sequence $(\pi_n)_{n\ge 1}$ \emph{converges} if for every pattern $\sigma\in S$, the pattern density $t_\sigma(\pi_n)$ converges. In this case, it was shown in \cite{Hoppen} that there exists a permuton $\nu_\infty\in \widetilde{\cM}$, such that 
\begin{equation}\label{eq:perm_lim}
	\lim_{n\to\infty}t_\sigma(\pi_n)=t_\sigma(\nu_\infty).
\end{equation}
We then say that $\pi_n$ \emph{converges to} $\nu_\infty$.
Moreover, given any $\nu \in \widetilde{\cM}$, there exists a sequence of permutations $\pi_n\in S_n$ which converges to $\nu$. Thus $\widetilde{\cM}$ is exactly the set of all possible limits of permutations. 

Another important result of \cite{Hoppen} is that a sequence of permutations $(\pi_n)_{n\ge 1}$ with $\pi_n\in S_n$ converges to a measure $\nu_\infty\in \widetilde{\cM}$ if and only if the sequence of probability measures $\Em(\pi_n)$ converges in the weak topology to $\nu_\infty$. It also follows from this result that the space of all permutation limits, equipped with the topology of pattern convergence, is compact.

\begin{remark}
	In the rest of the paper, whenever we say that a sequence of permutations $\pi_n$ converges to a permuton $\nu_\infty$, we mean that the corresponding sequence $\Em(\pi_n)$ converges weakly to $\nu_\infty$. Similarly, if we say that a sequence of random permutations $\pi_n$ satisfies a certain LDP, we mean that the corresponding sequence $\Em(\pi_n)$ satisfies the LDP on $\mathcal P$ with respect to weak topology.
\end{remark}

\begin{remark}
	In the permuton literature (see the discussion after this remark for a brief overview) it is more common to associate a sequence of permutations $\pi_n$ with the sequence of permutons $\nu_{\pi_n}$ (rather than the probability measures $\Em(\pi_n)$) defined  by
	\begin{equation*}
		\nu_{\pi_n}(A)
		= 
		n 
		\sum_{i=1}^n 
		\text{Leb}
		\big([(i-1)/n, i/n]
		\times
		[(\sigma(i)-1)/n,\sigma(i)/n]
		\cap 
		A\big),
	\end{equation*}
	for all Borel measurable sets $A$ of $[0,1]^2$.
	In words, $\nu_{\pi_n}$ can be obtained by uniformly distributing mass to the squares $\{[\frac{i-1}{n}, \frac{i}{n}]\times  [\frac{\sigma(i)-1}{n}, \frac{\sigma(i)}{n}]: 1\leq i \leq n\}.$ We highlight that all our results hold also with this different encoding, applying for instance \cite[Theorem 4.2.13]{DZ}, since the Kolmogorov-Smirnov distance between $\nu_{\pi_n}$ and $\Em(\pi_n)$ is at most $\frac1n$.
\end{remark}

Permuton limits have been investigated for various models of random permutations and there is a growing literature in the past decade. {Permuton limits have been used to study several permutation statistics of interest, such as fixed points, number of cycles of a given length, permutation graphs, and the longest increasing subsequence~\cite{mueller2013length,muk2,muk3,lis-bass-22,borga2022permutons,dubach2023locally}.} 
For many models, the permuton limits are deterministic permutons, for instance, Erd\"{o}s-Szekeres permutations \cite{MR2266895}, Mallows random permutations \cite{Shannon,starr2018phase}, certain classes of exponential families on permutations \cite{Mukherjee}, random sorting networks \cite{dauvergne2018archimedean}, permutations avoiding decreasing sequences \cite{hoffman2017pattern,hoffman2019scaling}, permutations with fixed pattern densities \cite{Winkler}, almost square permutations \cite{borga2021almost}, and permutations sorted with the {\sc{runsort}} algorithm \cite{alon2021runsort}. For random pattern-avoiding permutations, the limiting permutons appear to be random in many cases. In~\cite{borga2021skewperm} a two-parameter family of permutons, called the \emph{skew Brownian permuton}, was introduced to cover most of the known examples \cite{bassino2018separable,bassino2017universal, MR4115736,bassino2019scaling,borga2021permuton}.

{In a different spirit compared to ours, large deviation results for pattern-avoiding permutations has been studied in the literature \cite{miner2014shape,atapour2014large,madras2016large}. Perhaps more related to the current research is the LDP for random graphs with respect to the \emph{cut metric} (see \cite{chatterjee2011large, borgs2020large, dhara2022large} and references there-in). In \cite{chatterjee2011large} the authors study an LDP for  Erd\"{o}s-R\'enyi random graphs. Utilizing this LDP, \cite{chatterjee2013estimating} investigates Exponential Random Graph Models (ERGMs), which are exponential families on the space of graphs. On the other hand, \cite{DL,kenyon2018bipodal} study the behavior of random graphs constrained by subgraph densities. More recently, \cite{dhara2022large} and \cite{borgs2020large} generalize this LDP to the setting of inhomogeneous random graphs. In a similar manner, in the permutation world \cite{Trashorras,Mukherjee,Winkler} establish the LDP for a uniformly random permutation.  Utilizing this, \cite{Mukherjee} studies exponential families on the space of permutations (of which the Mallows models is a special case), and \cite{Winkler} studies conditional behavior of a uniformly random permutation under constraint on pattern densities. The LDP for $\mu$ random permutations (studied here) is analogous to the LDP for inhomogeneous random graphs. As will be explained below, this allows us to study a much more general class of probability measures on permutations, and demonstrate interesting phase transition properties there-in. }


\subsection{Large deviation principle for $\mu$-random permutations}\label{sec1.2}

We begin with our first main result that establishes an LDP for random empirical measures corresponding to $\mu$-random permutations. 

To describe the \emph{good rate function} (see \cite[Section 1.2]{DZ} for basic definitions related to LDP), we need the following notation:
Let $(\sigma_i)_{i\ge 1}$ be an enumeration of the set of all patterns of all sizes. Define a mapping\footnote{Note that the mapping $\PT$ is defined from $\mathcal{P}$ to $[0,1]^\N$, but here it is only used when restricted to $\cM$. The motivation is that later we will also need to use the mapping $\PT$ on the space $\mathcal P$ (see for instance \eqref{eq:gen_map}).} $\PT:\mathcal{P}\to[0,1]^\N$ by setting
\begin{equation}\label{eq:same_prop}
	\PT(\gamma)=\left(t_{\sigma_i}(\gamma)\right)_{i\ge 1}.
\end{equation}
It follows from \cite{Hoppen} that given any $\mu\in \cM$, there exists a unique $\widetilde{\mu}$ in $\widetilde{\cM}$ such that $\PT(\mu)=\PT(\widetilde{\mu})$. In particular, $\widetilde{\mu}=\mathcal{O}(\mu)$, where $\mathcal{O}(\cdot)$ is the mapping introduced in \eqref{eq:map_proj}.

\medskip

The claimed good rate function for the LDP for $\pi_{n,\mu}$ is 
\begin{eqnarray}\label{eq:rate}
	\begin{split}
		I_\mu(\gamma)= \begin{cases}
			\inf_{\nu\in \cM:\mathcal{O}(\nu)=\gamma} D(\nu|\mu) &\text{ if }\gamma\in \widetilde{\cM},\\
			\infty& \text{ if }\gamma\notin  \widetilde{\cM},
		\end{cases}
	\end{split}
\end{eqnarray}
where $D(\nu|\mu)$ denotes the \emph{Kullback–Leibler divergence}, i.e.\ $D(\nu|\mu)=\int_{[0,1]^2}\log(\frac{d\nu}{d\mu})d\nu$.

\medskip

Our first main result shows that $\pi_{n,\mu}$ (introduced in Definition \ref{def:mu_random}) satisfies an LDP with the above good rate function.

\begin{theorem}\label{thm:main} 
	For any probability measure $\mu\in \mathcal{M}$, the random permutations $\pi_{n,\mu}$ satisfy an LDP with speed $n$ and good rate function $I_\mu(\cdot)$ as in \eqref{eq:rate}.  More precisely, for any Borel set $A\subset \mathcal{P}$ we have
	\begin{equation*}
		-\inf_{\gamma\in  A^\circ} I_\mu(\gamma)\le \liminf_{n\to\infty}\frac{1}{n}\log \P(\Em(\pi_{n,\mu})\in A)
		\le \limsup_{n\to\infty}\frac{1}{n}\log \P(\Em(\pi_{n,\mu})\in A)\le -\inf_{\gamma\in \bar{A}} I_\mu(\gamma),
	\end{equation*}
where $A^\circ$ and $\bar{A}$ denote the interior and the closure of $A$, respectively. 
\end{theorem}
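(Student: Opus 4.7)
The plan is to reduce Theorem \ref{thm:main} to Sanov's theorem for the raw i.i.d.\ empirical measure $L_n := \frac{1}{n}\sum_{i=1}^n \delta_{(X_i,Y_i)}$, where $(X_i,Y_i)\stackrel{\mathrm{i.i.d.}}{\sim}\mu$, combined with a contraction through the rank transformation. First, I extend the map $\mathcal{O}$ from \eqref{eq:map_proj} to all of $\mathcal{P}$ by setting $\Psi(\nu):=(F_{\nu,1},F_{\nu,2})_{\#}\,\nu$, where $F_{\nu,1},F_{\nu,2}$ are the marginal cumulative distribution functions of $\nu$; the restriction of $\Psi$ to $\mathcal{M}$ coincides with $\mathcal{O}$. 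The starting observation is the almost sure identity
\begin{equation*}
    \Em(\pi_{n,\mu}) = \Psi(L_n),
\end{equation*}
which holds because the empirical CDFs of the $X_i$'s and $Y_i$'s, evaluated at the sample points, return the normalized ranks $r_i/n$ and $s_i/n$, and the multiset $\{(r_i,s_i)\}_{i=1}^n$ is by construction equal to $\{(k,\pi_{n,\mu}(k))\}_{k=1}^n$.

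The central technical step will be a one-sided continuity property of $\Psi$: if $\nu_k \to \nu$ weakly in $\mathcal{P}$ and $\nu \in \mathcal{M}$, then $\Psi(\nu_k) \to \mathcal{O}(\nu)$ weakly. This follows from the classical fact that weak convergence to a distribution with continuous marginal CDFs upgrades to uniform convergence of those marginal CDFs; plugging this into $\int f\big(F_{\nu_k,1}(x),F_{\nu_k,2}(y)\big)\,d\nu_k(x,y)$ for an arbitrary bounded continuous test function $f$ lets one pass to the limit and identify it with $\int f\,d\mathcal{O}(\nu)$.

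Armed with this, I would transfer the Sanov LDP for $L_n$ (with speed $n$ and good rate function $D(\cdot|\mu)$) to $\Em(\pi_{n,\mu})=\Psi(L_n)$. For the upper bound on a closed set $C$, write $\mathbb{P}(\Em(\pi_{n,\mu})\in C) = \mathbb{P}(L_n \in \Psi^{-1}(C))$ and apply Sanov to obtain an upper bound governed by $\inf_{\nu\in \overline{\Psi^{-1}(C)}} D(\nu|\mu)$; any $\nu$ in this closure with $D(\nu|\mu)<\infty$ automatically lies in $\mathcal{M}$ (since $\nu\ll\mu$ preserves continuity of marginals), and the continuity property then forces $\mathcal{O}(\nu)\in C$, yielding $D(\nu|\mu)\ge I_\mu(\mathcal{O}(\nu))\ge \inf_{C}I_\mu$. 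For the lower bound on an open set $U$ with $\gamma\in U$ and $I_\mu(\gamma)<\infty$, I would pick $\nu\in \mathcal{M}$ with $\mathcal{O}(\nu)=\gamma$ and $D(\nu|\mu)$ arbitrarily close to $I_\mu(\gamma)$; the continuity of $\Psi$ at $\nu$ produces a whole weak neighborhood $W$ of $\nu$ contained in $\Psi^{-1}(U)$, and Sanov's lower bound applied to $W$ finishes. Goodness of $I_\mu$ (lower semicontinuity together with compact level sets in the compact space $\mathcal{P}$) then follows either from a compactness-of-level-sets argument for $D(\cdot|\mu)$ combined with the same continuity property, or more directly by verifying the simplification $I_\mu(\gamma)=D(\gamma|\mathcal{O}(\mu))$ via a change-of-variables computation (with optimum attained by the $\nu$ having the marginals of $\mu$).

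The principal obstacle is that $\Psi$ is \emph{not} globally continuous on $\mathcal{P}$: it is discontinuous precisely at measures with atoms in a marginal, and every empirical measure $L_n$ is such a point, so the classical contraction principle does not apply off the shelf. The circumvention is that the effective domain $\{\nu : D(\nu|\mu)<\infty\}$ is contained in $\mathcal{M}$, where $\Psi$ \emph{is} continuous from the ambient weak topology on $\mathcal{P}$, and this one-sided continuity is exactly what is required to validate both halves of the LDP.
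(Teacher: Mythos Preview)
Your approach is correct and is a genuinely different, more direct route than the paper's. The paper factors the contraction through the pattern-density space: it pushes the Sanov LDP for $L_n$ through the map $\PT:\mathcal{P}\to[0,1]^{\mathbb{N}}$, $\PT(\gamma)=(t_{\sigma_i}(\gamma))_{i\ge 1}$, uses the identity $\PT(\Em(\pi_{n,\mu}))=\PT(L_n)$, and then contracts back via $\PT^{-1}$ restricted to $\widetilde{\mathcal{M}}$, invoking the Hoppen et al.\ result that $\PT$ is a homeomorphism onto its image there. You bypass the pattern space entirely by observing that the rank map $\Psi$ already gives $\Em(\pi_{n,\mu})=\Psi(L_n)$ exactly, and that $\Psi$ is continuous at every point of $\mathcal{M}=\{D(\cdot|\mu)<\infty\}$, so the extended contraction principle (Dembo--Zeitouni, Theorem~4.2.1 and its remark) applies in one step. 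Your route is shorter and avoids the detour through $[0,1]^{\mathbb{N}}$; the paper's route has the advantage that the required continuity (Lemma~\ref{lem:rate}) is somewhat more elementary than the uniform-CDF-convergence argument you need.

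One correction: your proposed simplification $I_\mu(\gamma)=D(\gamma|\mathcal{O}(\mu))$, with the optimum attained by the $\nu$ sharing the marginals of $\mu$, is \emph{false} in general. Proposition~\ref{ppn:counter_new} in the paper is an explicit counterexample: there $\mu_1\in\widetilde{\mathcal{M}}$ (so $\mathcal{O}(\mu_1)=\mu_1$), yet a permuton $\gamma$ is exhibited with $D(\gamma|\mu_1)=\infty$ while $I_{\mu_1}(\gamma)<\infty$; the minimizing $\nu$ lies in $\mathcal{M}\setminus\widetilde{\mathcal{M}}$. So you must use your first option for goodness (compact level sets of $D(\cdot|\mu)$ plus continuity of $\Psi$ on $\mathcal{M}$), which is in any case automatic from the contraction principle once you have the LDP.
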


We note that when $\mu=\lambda$ is the Lebesgue measure on $[0,1]^2$, the permutation $\pi_{n,\mu}$ is uniformly random on $S_n$, the set of all permutations of size $n$. As noted before, in this case the LDP is well known in the literature.

\begin{corollary}[\cite{Trashorras,Mukherjee,Winkler}]\label{cor:ldp_unif}
	If $\pi_n$ is uniformly random on $S_n$, then it satisfies an LDP on $\mathcal{P}$ with speed $n$ and the good rate function 
	\begin{align}\label{eq:unifgrf}
		I(\gamma):= \begin{cases}
			 D(\gamma|\lambda) & \text{ if }\gamma\in  \widetilde{\mathcal{M}},\\
			 \infty & \text{ otherwise},
		\end{cases}
	\end{align}
	where $\lambda$ is the Lebesgue measure on $[0,1]^2$. 
\end{corollary}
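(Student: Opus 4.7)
The plan is to derive Corollary \ref{cor:ldp_unif} as a direct consequence of Theorem \ref{thm:main} applied to $\mu = \lambda$. Two items require verification: (a) when $\mu = \lambda$, the random permutation $\pi_{n,\mu}$ is uniformly distributed on $S_n$, and (b) the good rate function $I_\lambda(\cdot)$ defined in \eqref{eq:rate} agrees on $\widetilde{\mathcal{M}}$ with the simpler expression $D(\cdot\,|\,\lambda)$.

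For (a), if $(X_i,Y_i)_{i=1}^n \stackrel{i.i.d.}{\sim}\lambda$, then after the $x$-reordering we are reading an i.i.d.\ uniform sample $Y_1,\dots,Y_n$ in a uniformly random order independent of the values, so the induced permutation is uniform on $S_n$. This is a classical observation that I would simply state and invoke.

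For (b), the inclusion $\widetilde{\mathcal{M}}\subset \mathcal{M}$ together with the identity $\mathcal{O}(\gamma)=\gamma$ for $\gamma\in\widetilde{\mathcal{M}}$ immediately gives $I_\lambda(\gamma)\le D(\gamma\,|\,\lambda)$ by choosing $\nu=\gamma$ in the infimum defining \eqref{eq:rate}. The reverse inequality is the substantive content. Given any $\nu\in\mathcal{M}$ with $\mathcal{O}(\nu)=\gamma$, let $F_1,F_2$ denote its marginal c.d.f.'s and let $f$ be its Lebesgue density. A change of variables $(u,v)=(F_1(x),F_2(y))$ identifies the Lebesgue density $g$ of $\gamma=\mathcal{O}(\nu)$ with the copula density $g(u,v)=f(F_1^{-1}(u),F_2^{-1}(v))/(f_1(F_1^{-1}(u))f_2(F_2^{-1}(v)))$, where $f_1,f_2$ are the marginal densities of $\nu$. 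Substituting $\log f(x,y) = \log g(F_1(x),F_2(y))+\log f_1(x)+\log f_2(y)$ into the definition of $D(\nu\,|\,\lambda)$ and pushing forward the first term yields the additive decomposition
\begin{equation*}
D(\nu\,|\,\lambda) \;=\; D(\gamma\,|\,\lambda) \;+\; D(f_1\,|\,1) \;+\; D(f_2\,|\,1),
\end{equation*}
where $D(f_i\,|\,1)=\int_0^1 f_i\log f_i$ is the relative entropy of the marginal against the uniform density on $[0,1]$. Both marginal terms are nonnegative by Gibbs' inequality, giving $D(\nu\,|\,\lambda)\ge D(\gamma\,|\,\lambda)$, with equality iff $\nu\in\widetilde{\mathcal{M}}$ (in which case $\nu=\gamma$). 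This shows $I_\lambda(\gamma) = D(\gamma\,|\,\lambda)$.

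Combining (a) and (b) with Theorem \ref{thm:main} yields the stated LDP. The only mild obstacle is justifying the decomposition in (b); one has to check that the marginal c.d.f.'s are absolutely continuous whenever $D(\nu\,|\,\lambda)<\infty$, so that the change of variables is valid. For the case $D(\nu\,|\,\lambda)=\infty$ the inequality is trivial, so this technicality does not affect the conclusion.
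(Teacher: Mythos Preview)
Your proposal is correct and follows the same strategy as the paper: apply Theorem~\ref{thm:main} with $\mu=\lambda$ and then show that the rate function $I_\lambda(\gamma)=\inf_{\nu\in\mathcal{M}:\mathcal{O}(\nu)=\gamma}D(\nu|\lambda)$ collapses to $D(\gamma|\lambda)$ for $\gamma\in\widetilde{\mathcal{M}}$. The only difference is that the paper obtains this identity by citing \cite[Proposition~4.2]{starr2018phase}, whereas you supply a self-contained proof via the copula--marginal decomposition $D(\nu|\lambda)=D(\mathcal{O}(\nu)|\lambda)+D(f_1|1)+D(f_2|1)$; this is precisely the content of that cited proposition, so the arguments coincide in substance.
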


\begin{remark}\label{rem:rets}
	We highlight an important difference between the statements of Theorem \ref{thm:main} and Corollary \ref{cor:ldp_unif}. For a general probability measure $\mu\in \mathcal{M}$, the good rate function $I_\mu(\gamma)$ in \eqref{eq:rate} requires minimizing the divergence $D(\cdot|\mu)$ over the set $\{\nu\in \cM:\mathcal{O}(\nu)=\gamma\}$, that is, it requires considering the larger set $\cM$ in the minimization. In particular, we show in Proposition \ref{ppn:counter_new}  below that one cannot directly restrict the minimization problem over the space of permutons $\widetilde{\mathcal{M}}$, as in the case of uniform random permutations in \eqref{eq:unifgrf}.
\end{remark}

\begin{proposition}\label{ppn:counter_new}
Suppose $X\sim U[0,1]$. If $X<\frac{1}{2}$, set $Y\sim U[0,1/2]$, and if $X>\frac{1}{2}$, set $Y\sim U[1/2,1]$. Let $\mu_1\in \widetilde{\mathcal{M}}$ denote the law of $(X,Y)$. There exists $\gamma\in\til{\mathcal{M}}$ such that the rate function $I_{\mu_1}(\gamma)$ defined in \eqref{eq:rate} equals $D(\nu|\mu_1)<\infty$ for some $\nu \in \mathcal{M}\setminus \til{\mathcal{M}}$, but $D(\gamma|\mu_1)=\infty$. 
\end{proposition}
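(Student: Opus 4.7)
The plan is to take $\gamma=\lambda$, the Lebesgue measure on $[0,1]^2$, and $\nu$ to be the uniform probability measure on $[0,1/2]^2$. Since $\mu_1\in \widetilde{\mathcal{M}}$ has density $2$ with respect to $\lambda$ on its support $[0,1/2]^2\cup [1/2,1]^2$, we get $\frac{d\nu}{d\mu_1}=2\cdot\mathds{1}_{[0,1/2]^2}$ and hence $D(\nu|\mu_1)=\log 2$. The marginals of $\nu$ are both $U[0,1/2]$, continuous but not uniform on $[0,1]$, so $\nu\in \mathcal{M}\setminus \widetilde{\mathcal{M}}$, and the linear change of variable $x\mapsto 2x$ applied to each coordinate shows $\mathcal{O}(\nu)=\lambda=\gamma$. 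On the other hand, $D(\gamma|\mu_1)=\infty$ since $\lambda\bigl([0,1/2]\times [1/2,1]\bigr)=1/4$ while $\mu_1$ vanishes on this set.

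The heart of the argument is verifying $I_{\mu_1}(\lambda)=\log 2$. The upper bound $I_{\mu_1}(\lambda)\leq D(\nu|\mu_1)=\log 2$ is immediate from the definition. For the lower bound, let $\nu'\in\mathcal{M}$ satisfy $\mathcal{O}(\nu')=\lambda$ and $D(\nu'|\mu_1)<\infty$. Because $\lambda$ has independent coordinates and the marginal CDFs $F_1,F_2$ of $\nu'$ are $\nu'$-a.s.\ strictly increasing (cf.\ the remark following Definition \ref{def:mu_random}), the map $(X,Y)\mapsto (F_1(X),F_2(Y))$ is an a.s.\ bijection, forcing $\nu'$ to itself be a product $\mu_X\otimes \mu_Y$. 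Finite divergence then forces $\nu'(\mathrm{supp}(\mu_1))=1$; writing $p=\mu_X([0,1/2])$ and $q=\mu_Y([0,1/2])$ (well-defined since the continuity of marginals yields $\mu_X(\{1/2\})=\mu_Y(\{1/2\})=0$), this condition becomes $pq+(1-p)(1-q)=1$, whose only solutions in $[0,1]^2$ are $(0,0)$ and $(1,1)$. Hence both $\mu_X$ and $\mu_Y$ must live in the same half. Taking the case $p=q=1$ and letting $f_X,f_Y$ be the densities on $[0,1/2]$, the divergence decomposes as
\[
D(\mu_X\otimes \mu_Y\,|\,\mu_1)=\int_0^{1/2} f_X\log f_X\,dx+\int_0^{1/2} f_Y\log f_Y\,dy-\log 2,
\]
and Jensen's inequality (e.g.\ by rewriting each integral as $D(\mu_X\,|\,U[0,1/2])+\log 2\geq \log 2$) bounds each of the first two integrals below by $\log 2$, with equality iff the density equals $2$. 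Thus $D(\nu'|\mu_1)\geq \log 2$, with equality achieved precisely by our $\nu$ (or its symmetric counterpart on $[1/2,1]^2$), so $I_{\mu_1}(\lambda)=\log 2$ and is attained by a measure in $\mathcal{M}\setminus\widetilde{\mathcal{M}}$.

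The main obstacle will be the product-measure reduction: one must carefully justify that $\mathcal{O}(\nu')=\lambda$ genuinely forces $\nu'$ to have independent coordinates, relying on the a.s.\ bijectivity of the marginal-CDF transformation. Once this structural fact is in hand, the rest of the argument is a one-dimensional entropy minimization.
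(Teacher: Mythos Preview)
Your proof is correct and in fact more complete than the paper's. The paper chooses a different pair: for $p\in(0,1)$ with $p\neq\tfrac12$ it takes
\[
\nu=4p\,\lambda|_{[0,\frac12]^2}+4(1-p)\,\lambda|_{[\frac12,1]^2},\qquad
\gamma=p^{-1}\lambda|_{[0,p]^2}+(1-p)^{-1}\lambda|_{[p,1]^2},
\]
verifies $\mathcal{O}(\nu)=\gamma$, and simply notes $D(\nu|\mu_1)<\infty$ while $D(\gamma|\mu_1)=\infty$; it does not check that its $\nu$ realizes the infimum, leaving the existence of a minimizer in $\mathcal{M}\setminus\widetilde{\mathcal{M}}$ to an implicit compactness/uniqueness argument (the only permuton in the fiber $\mathcal{O}^{-1}(\gamma)\cap\widetilde{\mathcal{M}}$ is $\gamma$ itself). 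Your choice $\gamma=\lambda$, $\nu=\mathrm{Unif}([0,\tfrac12]^2)$ is cleaner, and your lower-bound argument---reducing to product measures via the a.s.\ invertibility of the marginal CDFs, then forcing both marginals into the same half-square and minimizing a one-dimensional relative entropy---actually pins down $I_{\mu_1}(\lambda)=\log 2$ and identifies the minimizers explicitly. This is strictly more information than the paper provides, at the cost of the extra product-measure step, which you have justified correctly.
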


In the next sections we study several consequences of Theorem \ref{thm:main}.

\subsection{Gibbs random permutations}\label{sec1.3}

As a first application, we introduce and study a class of models on permutations that generalizes both the $\mu$-random permutations introduced above, and the well-studied Mallows model on permutations, in which the probability of a permutation
is proportional to a real parameter raised to the power of
number of inversions of the permutation (see for instance \cite{MR87267,diaconis1988group,Shannon,gladkich2018cycle,diaconis2021statistical,he2021central}).

\begin{definition}\label{def:gibbs}
	
	Fix a pattern $\sigma\in S_k$, $\mu\in \mathcal{M}$, and $\theta\in \R$. Define a \emph{Gibbs probability distribution} $Q_{n,\sigma,\mu,\theta}$ on $[0,1]^{2n}$ by setting
	\begin{align}\label{eq:gibbs_general}
		\frac{dQ_{n,\sigma,\mu,\theta}}{d\mu^{\otimes n}}({\bf x},{\bf y})=\exp\Big(n\theta t_\sigma(\pi_{{\bf x},{\bf y}})
		-F_n(\sigma,\mu,\theta)\Big),
	\end{align}
	where $({\bf x},{\bf y})=((x_1,y_1),\ldots,(x_n,y_n))\in [0,1]^{2n}$ and $F_n(\sigma,\mu,\theta)$ is the \emph{log partition function} of the model, that is
	\begin{equation}\label{eq:gibbs_log_part}
		F_n(\sigma,\mu,\theta):=\log\left(\int_{[0,1]^{2n}}\exp\left(n\theta t_\sigma(\pi_{{\bf x},{\bf y}})\right)d\mu^{\otimes n}({\bf x},{\bf y})\right)\stackrel{\eqref{eq:muranper}}{=}\log\E\exp\left(n\theta t_\sigma(\pi_{n,\mu})\right).
	\end{equation}
	Letting $(\widetilde{\bf X},\widetilde{\bf Y})=((\widetilde{X}_1,\widetilde{Y}_1),\ldots,(\widetilde{X}_n,\widetilde{Y}_n))$ be a random vector with distribution $Q_{n,\sigma,\mu,\theta}$, we set
	\begin{equation}\label{eq:Gibss_def_perm}
		\pi_{n,\sigma,\mu,\theta}:=\pi_{\widetilde{\bf X},\widetilde{\bf Y}}
	\end{equation}
	and we refer to it as \emph{Gibbs random permutation} (of size $n$ with parameters  $\sigma\in S_k$, $\mu\in \mathcal{M}$, and $\theta\in \R$).
\end{definition}

In particular, note that:
\begin{itemize}
	\item if we set $\theta=0$, then $\pi_{n,\sigma,\mu,0}=\pi_{n,\mu}$ is the $\mu$-random permutation introduced before in Definition \ref{def:mu_random};
	\item if  $\mu=\lambda$ is Lebesgue measure, then one can check (a proof is given later in \eqref{eq:gibbs_lebesgue2}) that $\pi_{n,\sigma,\lambda,\theta}$ has the probability mass function
	\begin{align}\label{eq:gibbs_lebesgue}
		\P(\pi_{n,\sigma,\lambda,\theta}=\tau)=\exp\Big(n\theta t_\sigma(\tau)-Z_n(\sigma,\theta)\Big),\quad\text{for all}\quad \tau\in S_n,
	\end{align}
	where $Z_n(\sigma,\theta):=\log(n!)+F_n(\sigma,\lambda,\theta)$;
	\item if\footnote{We use the \emph{one-line notation} to write permutations, that is, if $\sigma\in S_{n}$ then we write $\sigma=\sigma(1)\cdots  \sigma(n)$.} $\sigma=21$ in \eqref{eq:gibbs_lebesgue}, we get the Mallows model on permutations, which has been of significant interest in probability and combinatorics.
\end{itemize}  

\medskip

Our first result studies the typical behavior of Gibbs random permutations. 

\begin{theorem}\label{thm:gibbs_random}
	Fix a pattern $\sigma\in S_k$, $\mu\in \mathcal{M}$ and $\theta\in \R$. Let $\pi_{n,\sigma,\mu,\theta}$ be the Gibbs random permutation introduced in Definition \ref{def:gibbs}. The following statements hold:
	
	\begin{enumerate}[(i)]
		\item  With $F_n(\sigma,\mu,\theta)$ as introduced in \eqref{eq:gibbs_log_part}, we have 
		\begin{align}\label{eq:opt}
			\frac{F_n(\sigma,\mu,\theta)}{n}\to F(\sigma,\mu,\theta):=\sup_{\nu\in \mathcal{M}}\{\theta t_\sigma(\nu)-D(\nu|\mu)\}.
		\end{align}
		
		\item At least one maximizer of the optimization problem in the right-hand side of \eqref{eq:opt} exists. Let $\nu_{\sigma,\mu,\theta}$ be any such maximizer. Then $\nu_{\sigma,\mu,\theta}\ll \mu$, and $g:=\frac{d\nu_{\sigma,\mu,\theta}}{d\mu}$ satisfies the Euler-Lagrange equation
		\begin{equation}
			g(z_1)\stackrel{\mu-\text{a.s.}}{=}\frac{\exp\Big(k\theta \int_{[0,1]^{2k-2}}h_\sigma(z_1,\ldots,z_k) \prod_{a=2}^k g(z_a) d\mu(z_a)\Big)}{\int_{[0,1]^2} \exp\Big(k\theta \int_{[0,1]^{2k-2}}h_\sigma(z_1,\ldots,z_k) \prod_{a=2}^k g(z_a) d\mu(z_a)\Big)},
		\end{equation}
		where $z_i=(x_i,y_i)\in[0,1]^2$.
		
		\item  The random permutations $\pi_{n,\sigma,\mu,\theta}$ satisfy an LDP with speed $n$ and good rate function
		\begin{align*}
			I_{\sigma,\mu,\theta}(\gamma):=\begin{cases}
				\inf_{\nu\in \cM:\mathcal{O}(\nu)=\gamma} \{D(\nu|\mu)-\theta t_\sigma(\nu)\}-\inf_{\nu \in \cM} \{D(\nu|\mu)-\theta t_\sigma(\nu)\} & \text{ if }\gamma\in \widetilde{\cM},\\
				\infty & \text{ otherwise}.
			\end{cases}
		\end{align*}
		
		\item  Let $\mathcal{F}(\sigma,\mu,\theta)$ denote the set of optimizers of part $(i)$. Then 
		\begin{equation*}
			d\Big(\pi_{n,\sigma,\mu,\theta},\mathcal{O}(\mathcal{F}(\sigma,\mu,\theta))\Big)\xrightarrow{P}0,
		\end{equation*}
		where $d(\cdot,\cdot)$ is any metric which characterizes weak convergence and $\mathcal{O}$ is the map introduced in \eqref{eq:map_proj}.
	\end{enumerate}
\end{theorem}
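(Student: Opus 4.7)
The plan is to combine the large deviation principle from part (iii) with an identification of the zero set of the good rate function $I_{\sigma,\mu,\theta}$. The key claim is that $\{\gamma \in \mathcal{P} : I_{\sigma,\mu,\theta}(\gamma) = 0\} = \mathcal{O}(\mathcal{F}(\sigma,\mu,\theta))$; once this is in hand, concentration of $\Em(\pi_{n,\sigma,\mu,\theta})$ on $\mathcal{O}(\mathcal{F}(\sigma,\mu,\theta))$ follows from standard compactness arguments together with the LDP upper bound.

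For the zero-set identification, write $C := \inf_{\nu \in \mathcal{M}}\{D(\nu|\mu) - \theta t_\sigma(\nu)\} = -F(\sigma,\mu,\theta)$, which is finite and attained by every $\nu \in \mathcal{F}(\sigma,\mu,\theta)$ by part (ii). If $\gamma = \mathcal{O}(\nu^*)$ for some $\nu^* \in \mathcal{F}(\sigma,\mu,\theta)$, then $\nu^*$ lies in the restricted set $\{\nu \in \mathcal{M} : \mathcal{O}(\nu) = \gamma\}$ and attains the value $C$, so the local infimum equals $C$ and $I_{\sigma,\mu,\theta}(\gamma) = 0$. Conversely, suppose $I_{\sigma,\mu,\theta}(\gamma) = 0$ and choose $\nu_n \in \mathcal{M}$ with $\mathcal{O}(\nu_n) = \gamma$ and $D(\nu_n|\mu) - \theta t_\sigma(\nu_n) \to C$. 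By weak compactness of $\mathcal{P}$, a subsequence converges weakly to some $\nu^* \in \mathcal{P}$; lower semicontinuity of $D(\cdot|\mu)$ on $\mathcal{P}$ and weak continuity of $t_\sigma$ yield $D(\nu^*|\mu) - \theta t_\sigma(\nu^*) \le C$. Finiteness of $C$ forces $D(\nu^*|\mu) < \infty$, hence $\nu^* \ll \mu$; since the marginals of $\mu$ are atomless, absolute continuity transfers this property to $\nu^*$, so $\nu^* \in \mathcal{M}$ and therefore $\nu^* \in \mathcal{F}(\sigma,\mu,\theta)$. Finally, $\PT(\nu_n) = \PT(\mathcal{O}(\nu_n)) = \PT(\gamma)$ for all $n$, and weak continuity of pattern densities gives $\PT(\nu^*) = \PT(\gamma)$; since pattern densities uniquely determine permutons in $\widetilde{\mathcal{M}}$, this yields $\mathcal{O}(\nu^*) = \gamma$.

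With the zero set identified, the convergence follows readily. Fix $\epsilon > 0$ and consider the closed set $B_\epsilon := \{\gamma \in \mathcal{P} : d(\gamma, \mathcal{O}(\mathcal{F}(\sigma,\mu,\theta))) \geq \epsilon\}$, which is compact (as a closed subset of the weakly compact $\mathcal{P}$) and disjoint from $\{I_{\sigma,\mu,\theta} = 0\} = \mathcal{O}(\mathcal{F}(\sigma,\mu,\theta))$. Lower semicontinuity of the good rate function on the compact set $B_\epsilon$ implies $\inf_{\gamma \in B_\epsilon} I_{\sigma,\mu,\theta}(\gamma) > 0$, and the LDP upper bound from part (iii) then yields $\P(d(\Em(\pi_{n,\sigma,\mu,\theta}), \mathcal{O}(\mathcal{F}(\sigma,\mu,\theta))) \ge \epsilon) \to 0$ at exponential rate, which is strictly stronger than the claimed convergence in probability.

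The main subtle step I anticipate is the converse direction in the zero-set identification, and specifically the verification that the weak subsequential limit $\nu^*$ lies in $\mathcal{M}$ rather than only in $\mathcal{P}$. The argument above proceeds by combining finiteness of $D(\nu^*|\mu)$ with the atomlessness of the marginals of $\mu$, which forces the marginals of $\nu^*$ to be continuous as well. The remaining steps consist of a clean compactness plus LDP-upper-bound argument of the kind standard for Gibbs variational problems.
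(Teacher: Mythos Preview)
Your approach is essentially the same as the paper's: use the LDP from part~(iii), identify the zero set of $I_{\sigma,\mu,\theta}$ as $\mathcal{O}(\mathcal{F}(\sigma,\mu,\theta))$, and deduce concentration. The paper packages the concentration step into its Lemma~\ref{lem:ldp}(i) and simply asserts the zero-set identification in one line, whereas you spell both out; in that sense your write-up is more complete than the paper's on this point.

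One genuine wrinkle in your converse argument: you invoke ``weak continuity of $t_\sigma$'' to obtain $D(\nu^*|\mu)-\theta t_\sigma(\nu^*)\le C$ \emph{before} you know $\nu^*\in\mathcal{M}$, and then use that inequality to conclude $D(\nu^*|\mu)<\infty$ and hence $\nu^*\in\mathcal{M}$. This is circular, because by Lemma~\ref{lem:rate} the map $t_\sigma$ is only guaranteed continuous at limit points lying in $\mathcal{M}$, not on all of $\mathcal{P}$. The fix is immediate and you essentially have all the pieces: since $t_\sigma$ takes values in $[0,1]$, the relation $D(\nu_n|\mu)-\theta t_\sigma(\nu_n)\to C$ already gives $\limsup_n D(\nu_n|\mu)\le C+|\theta|<\infty$, so lower semicontinuity of $D(\cdot|\mu)$ alone yields $D(\nu^*|\mu)<\infty$ and hence $\nu^*\ll\mu$, forcing $\nu^*\in\mathcal{M}$. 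Only then do you apply Lemma~\ref{lem:rate} to get $t_\sigma(\nu_n)\to t_\sigma(\nu^*)$ and conclude $\nu^*\in\mathcal{F}(\sigma,\mu,\theta)$. Reorder those two sentences and the argument is clean.
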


\begin{remark}
	We highlight that it is necessary to apply the mapping $\mathcal{O}$ in part $(iv)$. Indeed, as already pointed out below \eqref{eq:perm_lim}, limits of random permutations have uniform marginals and so live in the space of permutons $\widetilde{\mathcal{M}}$.
\end{remark}

Thus finding out the set of optimizers $\mathcal{F}(\sigma,\mu,\theta)$ is of interest, as they characterize the limits of Gibbs random permutations. The following theorem studies Gibbs random permutations in the so called high-temperature phase (borrowing a term from statistical physics terminology), and shows that in this case there is always a unique maximizer, which behaves nicely under perturbations.

\begin{theorem}\label{thm:gibbs_random2}
	Fix a pattern $\sigma\in S_k$, $\mu\in \mathcal{M}$ and $\theta\in \R$. Let $\pi_{n,\sigma,\mu,\theta}$ be a Gibbs random permutation introduced in Definition \ref{def:gibbs}. Then there exists $\theta_c>0$ (depending only on $k$, i.e.\ the size of the pattern $\sigma$), such that for $\theta\in (-\theta_c,\theta_c)$ the following hold:
	
	\begin{enumerate}[(i)]
		\item The optimization problem in the right-hand side of \eqref{eq:opt} has a unique solution $\nu_{\sigma,\mu,\theta}$, say. Further,  $$\pi_{n,\sigma,\mu,\theta}\xrightarrow{P}\mathcal{O}(\nu_{\sigma,\mu,\theta}).$$
		
		\item The function $\theta\mapsto t_\sigma(\nu_{\sigma,\mu,\theta})$ is continuous and non-decreasing. Further, the map $\theta\mapsto F(\sigma,\mu,\theta)$ is differentiable, with $F'(\sigma,\mu,\theta)=t_\sigma(\nu_{\sigma,\mu,\theta})$.
		
		\item  If $t_\sigma(\nu_{\sigma,\mu,\theta_1})=t_\sigma(\nu_{\sigma,\mu,\theta_2})$, then $\nu_{\theta_1}=\nu_{\theta_2}$.

		\item The map $\theta\mapsto \nu_{\sigma,\mu,\theta}$ from $(-\theta_c,\theta_c)$ to $\mathcal{M}$ is continuous in total variation.

		\item The map $\mu\mapsto \nu_{\sigma,\mu,\theta}$ from $\mathcal{M}$ to $\mathcal{M}$ is continuous, where the metric is total variation on the left-hand side, and weak convergence on the right-hand side
	\end{enumerate}
\end{theorem}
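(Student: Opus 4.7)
The plan is to build everything off part (i), which is the main technical point; parts (ii)--(v) then follow by relatively standard convex-analytic and compactness arguments.

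For part (i), I would start from the Euler-Lagrange equation in Theorem \ref{thm:gibbs_random}(ii) and rewrite it as a fixed point of the nonlinear operator
\begin{align*}
	\Phi_\theta(g)(z_1) := \frac{\exp\!\bigl(k\theta\, G[g](z_1)\bigr)}{\int_{[0,1]^2}\exp\!\bigl(k\theta\, G[g](z)\bigr)\,d\mu(z)}, \qquad G[g](z_1) := \int h_\sigma(z_1,\dots,z_k)\prod_{a=2}^k g(z_a)\,d\mu(z_a),
\end{align*}
acting on the convex set of $\mu$-probability densities. Because $0\le h_\sigma\le 1$ and $\int g\,d\mu=1$, one immediately has $G[g]\in[0,1]$, so any fixed point $g$ satisfies the a priori bound $e^{-2k|\theta|}\le g\le e^{2k|\theta|}$. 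Restricting $\Phi_\theta$ to the closed set of densities with this sup bound, I would use the multilinearity of $G$ to obtain $\|G[g_1]-G[g_2]\|_\infty\le (k-1)e^{2k|\theta|(k-2)}\|g_1-g_2\|_\infty$, and then an elementary estimate on the normalized exponential gives $\|\Phi_\theta(g_1)-\Phi_\theta(g_2)\|_\infty\le C_k|\theta|\,e^{C_k'|\theta|}\,\|g_1-g_2\|_\infty$ for constants depending only on $k$. Choosing $\theta_c>0$ small enough (depending only on $k$) makes $\Phi_\theta$ a contraction on the relevant $L^\infty$-ball, and Banach's fixed point theorem yields uniqueness. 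The convergence in probability then follows from Theorem \ref{thm:gibbs_random}(iv), since $\mathcal{F}(\sigma,\mu,\theta)=\{\nu_{\sigma,\mu,\theta}\}$. I expect this contraction estimate to be the main obstacle.

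For part (ii), I would observe that $F(\sigma,\mu,\theta)=\sup_{\nu\in\mathcal{M}}\{\theta t_\sigma(\nu)-D(\nu|\mu)\}$ is the supremum of affine functions of $\theta$, hence convex. By the Danskin/envelope theorem, together with the uniqueness proved in (i), $F$ is differentiable on $(-\theta_c,\theta_c)$ with $F'(\theta)=t_\sigma(\nu_{\sigma,\mu,\theta})$. Monotonicity of $\theta\mapsto t_\sigma(\nu_{\sigma,\mu,\theta})$ is then the monotonicity of the derivative of a convex function, and continuity on the open interval follows because a convex function is $C^1$ at every point where its derivative exists.

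Part (iii) is a short two-line manipulation. Writing $\nu_i:=\nu_{\sigma,\mu,\theta_i}$ and using optimality at $\theta_i$ gives
\begin{align*}
	D(\nu_1|\mu)-\theta_1 t_\sigma(\nu_1)\le D(\nu_2|\mu)-\theta_1 t_\sigma(\nu_2),\qquad D(\nu_2|\mu)-\theta_2 t_\sigma(\nu_2)\le D(\nu_1|\mu)-\theta_2 t_\sigma(\nu_1).
\end{align*}
Under the assumption $t_\sigma(\nu_1)=t_\sigma(\nu_2)$, these inequalities force $D(\nu_1|\mu)=D(\nu_2|\mu)$, so both measures are optimizers at $\theta_1$; uniqueness from (i) then yields $\nu_1=\nu_2$.

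Parts (iv) and (v) share the same structure. For (iv), take $\theta_n\to\theta_0$ in $(-\theta_c,\theta_c)$ and let $\nu_n=\nu_{\sigma,\mu,\theta_n}$; the a priori sup bound on $g_n=d\nu_n/d\mu$ obtained in part (i) gives weak-$\ast$ relative compactness, and any subsequential limit satisfies the Euler-Lagrange equation at $\theta_0$, so by uniqueness equals $\nu_{\sigma,\mu,\theta_0}$. To upgrade to total variation, I would plug the weak limit back into the fixed-point equation $g_n=\Phi_{\theta_n}(g_n)$ and use dominated convergence (justified by the uniform sup bound) to pass to the limit pointwise $\mu$-a.e., then invoke Scheff\'e's lemma. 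Part (v) is analogous: for $\mu_n\to\mu$ weakly, the fixed-point map $\Phi_\theta^{\mu_n}$ depends continuously on $\mu_n$ through integrals of bounded continuous test functions against $\mu_n^{\otimes(k-1)}$, so the same compactness-plus-uniqueness-plus-Scheff\'e scheme transfers weak convergence of $\mu_n$ into total-variation convergence of $\nu_{\sigma,\mu_n,\theta}$.
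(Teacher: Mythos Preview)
Your approach to (i)--(iv) is correct and broadly parallel to the paper's, though with different technical choices worth noting. For (i), the paper works in $L^1(\mu)$ rather than $L^\infty$: it uses the telescoping estimate $\int\bigl|\prod u(z_i) - \prod v(z_i)\bigr|\,d\mu^{\otimes(k-1)} \le k\|u-v\|_1$ (which needs no a priori sup bound on $u,v$) to get ratio bounds $e^{-2\theta k^2\|u-v\|_1}\le T_\theta(u)/T_\theta(v)\le e^{2\theta k^2\|u-v\|_1}$ directly. Your $L^\infty$ route also works, but make explicit that $\Phi_\theta$ maps the sup-bounded ball into itself (this holds since $G[g]\in[0,1]$ for \emph{any} probability density, not just bounded ones). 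For (ii), the paper avoids the envelope theorem and instead shows $\frac{1}{n}F_n'(\theta) = \E[t_\sigma(\pi_{n,\sigma,\mu,\theta})] \to t_\sigma(\nu_\theta)$ and integrates; your convex-analytic route is cleaner. For (iv), the paper obtains a quantitative estimate $\|f_m - f_\infty\|_1 \lesssim |\theta_m - \theta_\infty|$ straight from the fixed-point equation, whereas you go via weak compactness plus Scheff\'e; both are fine, though your pointwise-convergence step should spell out that $G[g_n](z_1) = \int h_\sigma(z_1,\cdot)\,d\nu_n^{\otimes(k-1)}$ and that $h_\sigma(z_1,\cdot)$ is $\nu_\infty^{\otimes(k-1)}$-a.e.\ continuous (since $\nu_\infty\in\mathcal{M}$).

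For (v), however, you have misread the statement, and the argument as written does not work. The hypothesis is $\mu_n\to\mu$ in \emph{total variation}, and the conclusion is only \emph{weak} convergence of $\nu_{\sigma,\mu_n,\theta}$; you assert the stronger ``weak in, TV out''. This fails on two counts. First, $h_\sigma$ is discontinuous on coordinate ties, so mere weak convergence of $\mu_n$ does not pass through integrals of $h_\sigma$ against $\mu_n^{\otimes(k-1)}$. Second, Scheff\'e's lemma is inapplicable when the reference measure changes: you have $g_n = d\nu_n/d\mu_n$, not $d\nu_n/d\mu$, so pointwise convergence of $g_n$ says nothing about $\|\nu_n-\nu_\infty\|_{TV}$ without further work. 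The paper's route is quite different: it couples i.i.d.\ samples from $\mu_m$ and $\mu_\infty$ via the TV distance to show $\frac{1}{n}\log\E e^{n\theta t_\sigma(\pi_{n,\mu_m})}$ and $\frac{1}{n}\log\E e^{n\theta t_\sigma(\pi_{n,\mu_\infty})}$ are close, deduces $F(\sigma,\mu_m,\theta)\to F(\sigma,\mu_\infty,\theta)$, and then uses tightness, continuity of $t_\sigma$, and \emph{joint} lower semicontinuity of $D(\cdot|\cdot)$ together with uniqueness to pin down any subsequential weak limit of $\nu_m$ as $\nu_\infty$. Under the correct TV hypothesis your fixed-point scheme can be repaired (TV passes through bounded measurable integrands, and the uniform sup bound on $g_n$ lets you compare $g_n\,d\mu_n$ with $g_n\,d\mu$), but this needs to be said explicitly and yields only the weak conclusion actually claimed.
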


\begin{remark}\label{rem:ass_is_needed_1}
	It was shown in \cite{Shannon,starr2018phase} that if $\sigma=21$ and $\mu=\lambda$, then $\mathcal{F}(21,\lambda,\theta)=\mathcal{O}(\mathcal{F}(21,\lambda,\theta))=\{\nu_\theta\}$ is a singleton for all $\theta\in \R$, and consequently, 
	\begin{equation*}
		\pi_{n,21,\lambda,\theta}\xrightarrow{P}\nu_\theta.
	\end{equation*}
	Note that in Theorem \ref{thm:gibbs_random2}, we generalize this result to Gibbs random permutations, allowing for general patterns $\sigma$ and general base measures $\mu$, but only in the regime $|\theta|<\theta_c$. We will actually show later that the assumption $|\theta|<\theta_c$ is needed in order to guarantee the uniqueness of the optimizer in this general setting of Gibbs random permutations. Indeed, in Proposition \ref{prop:counter} below, we will exhibit a permuton $\xi$ such that both sets of optimizers $\mathcal{F}(21,\xi,\theta)$ and $\mathcal{O}(\mathcal{F}(21,\xi,\theta))$ 
	appearing in Theorem \ref{thm:gibbs_random} part $(iv)$ have cardinality 2 for all $\theta>1$.
\end{remark}

For a general pattern $\sigma$, taking $\mu=\lambda$, i.e.\ the Lebesgue measure on $[0,1]^2$, we get the following corollary.

\begin{corollary}\label{cor:exp}
	Suppose $\pi_{n,\sigma,\lambda,\theta}$ is a random permutation with a p.m.f.\ as in \eqref{eq:gibbs_lebesgue} (or equivalently, as introduced in Definition \ref{def:gibbs}).  Then the following conclusions hold:
	\begin{enumerate}[(i)]	
		\item The random permutation $\pi_{n,\sigma,\lambda,\theta}$ satisfies an LDP with speed $n$ and the good rate function 
		\begin{align}\label{eq:gd_rate}
			I_{\sigma,\theta}(\gamma)=\begin{cases}
				D(\gamma|\lambda)-\theta t_{\sigma}(\gamma)-\inf_{\nu\in \widetilde{\cM}}\{D(\nu|\lambda)-\theta t_\sigma(\nu)\} & \text{ if } \gamma\in\widetilde{\mathcal{M}},\\
				\infty & \text{ otherwise}.
			\end{cases}
		\end{align}
		\item Recall the definition of $Z_n(\sigma,\theta)$ from \eqref{eq:gibbs_lebesgue}. For every $\theta\in \R$ we have 
		\begin{equation}\label{eq:opt_unif}
			\frac{Z_n(\sigma,\theta)-\log n!}{n}\to \sup_{\nu\in \til\cM} \left\{\theta t_{\sigma}(\nu)-D(\nu|\lambda)\right\}.
		\end{equation}
		
		\item With $\theta_c>0$ as in Theorem \ref{thm:gibbs_random2}, for all $\theta\in (-\theta_c,\theta_c)$ the optimization problem in the right-hand side of \eqref{eq:opt_unif} has a unique solution $\nu_{\sigma,\lambda,\theta}\in\widetilde{\cM}$. Further,  
		$$\pi_{n,\sigma,\lambda,\theta}\xrightarrow{P}\nu_{\sigma,\lambda,\theta}.$$
	\end{enumerate}
\end{corollary}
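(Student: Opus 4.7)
The plan is to derive all three parts by reducing everything in Theorems \ref{thm:gibbs_random} and \ref{thm:gibbs_random2} to quantities over $\widetilde{\cM}$, using the special structure of $\mu=\lambda$. The single technical lemma I will need is a chain-rule identity: for any $\nu\in \cM$ with density $f$ w.r.t.\ $\lambda$, marginal densities $f_1,f_2$ and marginal CDFs $F_1,F_2$,
\begin{equation*}
D(\nu|\lambda)=D(\mathcal{O}(\nu)|\lambda)+D(\nu_1|\lambda_1)+D(\nu_2|\lambda_1),
\end{equation*}
where $\lambda_1$ is Lebesgue on $[0,1]$ and $\nu_1,\nu_2$ are the marginals of $\nu$. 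To establish this, I would perform the change of variables $u=F_1(x)$, $v=F_2(y)$ in $D(\mathcal{O}(\nu)|\lambda)$ to identify it with the mutual information $D(\nu|\nu_1\otimes\nu_2)$; then the identity is the usual chain rule $D(\nu|\lambda)=D(\nu|\nu_1\otimes\nu_2)+D(\nu_1\otimes\nu_2|\lambda_1\otimes\lambda_1)$. Since the last two summands are non-negative and vanish simultaneously iff $\nu$ has uniform marginals, this gives $D(\nu|\lambda)\ge D(\mathcal{O}(\nu)|\lambda)$, with equality iff $\nu\in \widetilde{\cM}$.

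Combining this inequality with the invariance $t_\sigma(\nu)=t_\sigma(\mathcal{O}(\nu))$ (a consequence of \cite{Hoppen}, as already recalled below \eqref{eq:same_prop}), two simplifications follow. First, for every $\gamma\in \widetilde{\cM}$,
\begin{equation*}
\inf_{\nu\in \cM:\mathcal{O}(\nu)=\gamma}\{D(\nu|\lambda)-\theta t_\sigma(\nu)\}=D(\gamma|\lambda)-\theta t_\sigma(\gamma),
\end{equation*}
with the infimum attained uniquely at $\nu=\gamma$. Second,
\begin{equation*}
\inf_{\nu\in \cM}\{D(\nu|\lambda)-\theta t_\sigma(\nu)\}=\inf_{\nu\in \widetilde{\cM}}\{D(\nu|\lambda)-\theta t_\sigma(\nu)\},
\end{equation*}
and every minimizer on the left belongs to $\widetilde{\cM}$.

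With these in hand, parts $(i)$ and $(ii)$ follow immediately: substituting the two simplifications into the rate function of Theorem \ref{thm:gibbs_random}$(iii)$ recovers \eqref{eq:gd_rate}, and plugging the second simplification into Theorem \ref{thm:gibbs_random}$(i)$, together with the identity $Z_n(\sigma,\theta)=\log n!+F_n(\sigma,\lambda,\theta)$, yields \eqref{eq:opt_unif}. For part $(iii)$, Theorem \ref{thm:gibbs_random2}$(i)$ gives, for $|\theta|<\theta_c$, a unique maximizer $\nu_{\sigma,\lambda,\theta}\in \cM$ of the variational problem \eqref{eq:opt}; by the uniqueness of the equality case in the chain-rule inequality, this maximizer must lie in $\widetilde{\cM}$, so $\mathcal{O}(\nu_{\sigma,\lambda,\theta})=\nu_{\sigma,\lambda,\theta}$, and the convergence in probability is precisely the conclusion of Theorem \ref{thm:gibbs_random2}$(i)$.

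The only non-routine step in the plan is the chain-rule identity; once that is in place, the rest amounts to bookkeeping. I do not anticipate any genuine obstacle, since the change-of-variables argument is standard and the rest of the proof is essentially a specialization of the general theorems to the Lebesgue base measure.
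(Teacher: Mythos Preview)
Your proof is correct and follows essentially the same route as the paper. Both arguments specialize Theorems~\ref{thm:gibbs_random} and~\ref{thm:gibbs_random2} to $\mu=\lambda$ and reduce the variational problems from $\cM$ to $\widetilde{\cM}$ via the identity $\inf_{\nu\in\cM:\mathcal{O}(\nu)=\gamma}D(\nu|\lambda)=D(\gamma|\lambda)$; the only difference is that the paper quotes this identity from \cite[Proposition~4.2]{starr2018phase}, whereas you supply a self-contained proof through the chain rule $D(\nu|\lambda)=D(\nu|\nu_1\otimes\nu_2)+D(\nu_1|\lambda_1)+D(\nu_2|\lambda_1)$ together with the observation that $D(\nu|\nu_1\otimes\nu_2)=D(\mathcal{O}(\nu)|\lambda)$.
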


As already mentioned, uniqueness of optimizer is not true for general Gibbs random permutations for all $\theta\in\R$, even for the special case of inversions, i.e.\ $\sigma=21$, as we will show below in proposition \ref{prop:counter}.
In this Proposition, we will actually see that the map $\theta\mapsto \nu_{21,\mu,\theta}$ is constant in the interval $(-\infty,1)$. As it turns out, understanding the behavior of the function $\theta\mapsto \nu_{\sigma,\mu,\theta}$ near the origin is an important step in understanding $\mu$-random permutations under constraints. The following definition provides a sufficient condition on $\mu$ under which the map $\theta\mapsto \nu_{\sigma,\mu,\theta}$ is not constant in a small interval around the origin, i.e.\ for $\theta\in(-\theta_c,\theta_c)$. We first introduce a key notion.
\begin{definition}\label{def:cc_cnc}
	Fix $\sigma\in S_k$, and $\mu\in \mathcal{M}$. Let $({\bf X},{\bf Y})=\big((X_i,Y_i)\big)_{1\le i\le k}\stackrel{i.i.d.}{\sim}\mu$. We say that $\mu$ is \emph{conditionally constant} (\cc) with respect to $\sigma$, if there exists $c\in[0,1]$ such that
	\begin{equation*}
		\P(\pi_{{\bf X},{\bf Y}}=\sigma|(X_1,Y_1))=c\qquad \text{a.s.},
	\end{equation*}
	that is, if the random variable
	$\P(\pi_{{\bf X},{\bf Y}}=\sigma|(X_1,Y_1))$ is constant almost surely. Here $\pi_{{\bf X},{\bf Y}}$ is as in Definition \ref{def:mu_random}. If $\mu$ is not CC,  we will call the measure $\mu$ to be \emph{conditionally not constant} (\cnc) with respect to $\sigma$.
\end{definition}

With this definition, our next result shows that for \cnc\ measures $\mu$, the map $\theta\mapsto \nu_{\sigma,\mu,\theta}$ is indeed non-constant in a small interval around the origin.

\begin{proposition}\label{prop:cnc}
	Fix a pattern $\sigma\in S_k$. Suppose $\mu\in\cM$ is \cnc\, and let $\theta_c$ be as in Theorem \ref{thm:gibbs_random2}. Recall also that for all $\theta\in (-\theta_c,\theta_c)$, $\nu_{\sigma,\mu,\theta}$ denotes the unique maximizer of the optimization problem in the right-hand side of \eqref{eq:opt}. Then the following conclusions hold for all $\theta\in (-\theta_c,\theta_c)$:
	
	\begin{enumerate}[(i)]
		\item  $\nu_{\sigma,\mu,\theta}\ne \mu$ for all $\theta\ne 0$.
		
		\item $t_\sigma(\nu_{\sigma,\mu,\theta})>t_\sigma(\mu)$ for $\theta>0$, and the reverse strict inequality holds for $\theta<0$.
	\end{enumerate}
\end{proposition}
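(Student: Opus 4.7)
The plan is to combine the Euler--Lagrange equation from Theorem \ref{thm:gibbs_random}(ii) with the uniqueness of the maximizer for $|\theta|<\theta_c$ (Theorem \ref{thm:gibbs_random2}(i)), together with the baseline identity $\nu_{\sigma,\mu,0}=\mu$: at $\theta=0$ the functional reduces to $-D(\cdot|\mu)$, which is uniquely maximized at $\mu$. Both parts of the proposition then reduce to short variational arguments once the \cnc\ hypothesis is translated into a non-constancy statement about the function appearing in the Euler--Lagrange equation.

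For part (i), I would argue by contradiction. Suppose $\nu_{\sigma,\mu,\theta}=\mu$ for some $\theta\in(-\theta_c,\theta_c)\setminus\{0\}$, so that its Radon--Nikodym derivative $g:=d\nu_{\sigma,\mu,\theta}/d\mu$ equals $1$ almost surely. Substituting $g\equiv 1$ into the Euler--Lagrange identity in Theorem \ref{thm:gibbs_random}(ii) yields, $\mu$-a.s.,
\[
\exp\bigl(k\theta\,\phi(z_1)\bigr) \;=\; \int_{[0,1]^2}\exp\bigl(k\theta\,\phi(z)\bigr)\,d\mu(z),
\]
where $\phi(z_1):=\int_{[0,1]^{2k-2}} h_\sigma(z_1,z_2,\ldots,z_k)\prod_{a=2}^k d\mu(z_a)$. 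Since $\theta\ne 0$, this forces $\phi$ to be $\mu$-a.s.\ constant. However, from the definition \eqref{def:hs1} of $h_\sigma$, for $(X_1,Y_1)\sim\mu$ one has $\phi(X_1,Y_1)=\P(\pi_{{\bf X},{\bf Y}}=\sigma\mid (X_1,Y_1))$ almost surely, and the $\mu$-a.s.\ non-constancy of this conditional probability is precisely the \cnc\ assumption (Definition \ref{def:cc_cnc}). The resulting contradiction gives $\nu_{\sigma,\mu,\theta}\ne\mu$.

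For part (ii), I would exploit the strict sub-optimality of $\mu$ in the variational problem defining $F(\sigma,\mu,\theta)$. Writing $\nu_\theta:=\nu_{\sigma,\mu,\theta}$, uniqueness of the maximizer (Theorem \ref{thm:gibbs_random2}(i)) together with part (i) gives
\[
\theta\,t_\sigma(\nu_\theta)-D(\nu_\theta|\mu) \;>\; \theta\,t_\sigma(\mu)-D(\mu|\mu) \;=\; \theta\,t_\sigma(\mu),
\]
which rearranges to $\theta\bigl(t_\sigma(\nu_\theta)-t_\sigma(\mu)\bigr) > D(\nu_\theta|\mu) > 0$, where the strict positivity of the divergence uses $\nu_\theta\ne\mu$. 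Dividing by $\theta$ and accounting for its sign yields $t_\sigma(\nu_\theta)>t_\sigma(\mu)$ for $\theta>0$ and the reverse inequality for $\theta<0$.

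The main delicate point is the identification in part (i) of the integral $\phi(z_1)$ with the conditional probability appearing in the CNC condition, together with the observation that $\mu$-a.s.\ constancy of $\exp(k\theta\phi(\cdot))$ forces $\mu$-a.s.\ constancy of $\phi$ whenever $\theta\ne 0$. Both facts are straightforward from the definitions, and there is no further subtlety: once part (i) is secured, part (ii) is essentially a one-line variational consequence that does not further invoke the CNC property.
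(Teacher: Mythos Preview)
Your proof is correct, and in fact more direct than the paper's own argument.

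For part (i), the paper first argues that if $\nu_{\theta_1}=\mu$ for some $\theta_1>0$, then by monotonicity and the injectivity property of Theorem \ref{thm:gibbs_random2}(ii,iii) one must have $\nu_\theta\equiv\mu$ for \emph{all} $\theta\in[0,\theta_1]$; only then do they take logarithms and differentiate the Euler--Lagrange identity in $\theta$ to conclude that $W(z)=k\phi(z)$ is constant. You instead plug $g\equiv1$ into the Euler--Lagrange equation at the single parameter value $\theta$ and observe that the right-hand side is already a constant, forcing $\exp(k\theta\phi(\cdot))$---and hence $\phi$---to be $\mu$-a.s.\ constant. This bypasses the need for the interval argument and the differentiation in $\theta$ entirely, and uses only Theorem \ref{thm:gibbs_random}(ii).

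For part (ii), the paper again appeals to Theorem \ref{thm:gibbs_random2}(ii,iii) (monotonicity of $\theta\mapsto t_\sigma(\nu_\theta)$ together with the fact that equal $t_\sigma$-values force equal optimizers). Your approach replaces this by the one-line variational comparison: since $\mu$ is a strict sub-optimizer, $\theta(t_\sigma(\nu_\theta)-t_\sigma(\mu))>D(\nu_\theta|\mu)\ge0$, and the sign of $\theta$ finishes. This is cleaner and relies only on uniqueness (Theorem \ref{thm:gibbs_random2}(i)) plus part (i). The paper's route has the small side benefit of establishing monotonicity of $\theta\mapsto t_\sigma(\nu_\theta)$ along the way, but that is already recorded separately in Theorem \ref{thm:gibbs_random2}(ii), so nothing is lost by your shortcut.
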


\begin{remark}\label{rem:genpa}
	It follows immediately from part $(ii)$ in the proposition above that if $\mu\in \mathcal{M}$ satisfies
	\begin{equation*}
		\P_\mu(\pi_{{\bf X},{\bf Y}}=\sigma)=0,
	\end{equation*}
	i.e.\ $\mu$ is a \emph{pattern avoiding measure}, or
	\begin{equation*}
		\P_\mu(\pi_{{\bf X},{\bf Y}}=\sigma)=\sup_{\gamma\in \mathcal{M}}\{t_{\sigma}(\gamma)\},
	\end{equation*}
	i.e.\ $\mu$ is a \emph{pattern packing measure}, then $\mu$
	must be \cc.  
\end{remark}

Pattern packing/pattern avoiding permutations/measures have an extensive history in the combinatorics literature (see e.g.\ \cite{albert2002packing,presutti2010packing,kitaev2011patterns} or \cite[Chapter 12]{bona2015handbook} and references therein). In particular it is known that pattern avoiding permutations are not unique but their number grows at most exponentially \cite{marcus2004excluded}.  The issue of uniqueness of pattern \emph{packing} permutations is unresolved except for a few patterns of size $\le 4$. 

\medskip

We will later prove in Section \ref{sect:1.5} that the reverse conclusion (compared to Remark \ref{rem:genpa}) is not true in general; indeed, there exist \cc\ measures $\mu$ that are neither pattern packing nor pattern avoiding for $21$. In Section \ref{sect:1.5} we will also give some characterizations of \cc\ measures for $\sigma=21$. The problem of characterizing \cc\ measures for general patterns seems to be an interesting and challenging problem (see also Section \ref{openprob}).

\subsection{$\mu$-random permutations with an atypical proportion of patterns}

Using the above results for Gibbs random permutations, we are able to study the behavior of a $\mu$-random permutation $\pi_{n,\mu}$ conditioned on having an atypical proportion of patterns $\sigma$, i.e.\ an atypical value of $t_\sigma(\pi_{n,\mu})$. Stating the result requires the following definition:

\begin{definition}\label{def:f}
	For all patterns $\sigma\in S$, probability measures $\mu\in \mathcal{M}$ and positive real $\delta\in\R_{\geq0}$, set 
	\begin{align}\label{eq:f}
		G(\sigma,\mu,\delta):=\inf_{\nu\in \mathcal{M}: t_\sigma(\nu)\ge \delta} D(\nu|\mu)\qquad\text{ and }\qquad \mathcal{G}(\sigma,\mu,\delta):=\arg\inf_{\nu\in \mathcal{M}: t_\sigma(\nu)\ge \delta} D(\nu|\mu).
	\end{align}
	Note that the above definition makes sense only if there exists at least one $\nu\in \mathcal{M}$ such that $t_\sigma(\nu)\ge \delta$. Clearly, this depends on the triplet $(\sigma,\mu,\delta)$. To avoid trivial degeneracies, we define 
	$$\alpha_{\sigma}(\mu)=\inf \{\delta\mid G(\sigma,\mu,\delta)=\infty\},$$
	and consider $\delta \in (t_{\sigma}(\mu),\alpha_{\sigma}(\mu))$. Also, note that $G(\sigma,\mu,\cdot)$ is left continuous by lower semi-continuity of $D(\cdot|\mu)$. 
\end{definition}

\begin{theorem}\label{thm:constrain}
	Let $\mu\in \mathcal{M}$ and suppose $\pi_{n,\mu}$ is a $\mu$-random permutation (as in Definition \ref{def:mu_random}). Let $\sigma\in S_k$ be a fixed pattern. Then the following conclusions hold:
	
	\begin{enumerate}[(i)]
		\item Suppose $G(\sigma,\mu,\cdot)$ defined in \eqref{eq:f} is right continuous at $\delta$, for some $\delta>t_{\sigma}(\mu)$. Conditioned on the event $ \{t_\sigma(\pi_{n,\mu})\ge \delta\}$, we have
		\[
		d\Big(\pi_{n,\mu},\mathcal{O}(\mathcal{G}(\sigma,\mu,\delta))\Big)\xrightarrow{P}0,
		\]
		where $d(\cdot,\cdot)$ is any metric which characterizes permutation convergence and $\mathcal{O}$ is the map introduced in \eqref{eq:map_proj}.

		\item Suppose $\mu$ is \cnc\ with respect to $\sigma$. Then, setting 
		$$\delta_c:=\sup_{\theta\in (-\theta_c,\theta_c)} t_{\sigma}(\nu_{\sigma,\mu,\theta})=\lim_{\theta\to \theta_c^-}t_{\sigma}(\nu_{\sigma,\mu,\theta}),$$ 
		(where $\theta_c$ and $\nu_{\sigma,\mu,\theta}$ are as in Theorem \ref{thm:gibbs_random2}), we have $\delta_c>t_\sigma(\mu)$, and the following conclusions hold for $\delta\in (t_\sigma(\mu),\delta_c)$:
		
		\begin{enumerate}[(a)]
			
			\item The number 
			$$\hat{\theta}(\delta):=\max\{\theta>0:t_\sigma(\nu_{\sigma,\mu,\theta})=\delta\}\in (0,\theta_c)$$
			is well defined, and satisfies $t_\sigma\left(\nu_{\sigma,\mu,\hat{\theta}(\delta)}\right)=\delta$.
			
			\item The set $\mathcal{G}(\sigma,\mu,\delta)$ has the single element $\nu_{\sigma,\mu,\hat{\theta}(\delta)}$.

			\item The measure $\nu_{\sigma,\mu,\hat{\theta}(\delta)}$ of part $(b)$ is absolutely continuous with respect to $\mu$, and the Radon-Nikodym derivative $g:=\frac{d\nu_{\sigma,\mu,\hat{\theta}(\delta)}}{d\mu}$ satisfies the Euler-Lagrange equation 
			\[g(z_1)\stackrel{\mu-\text{a.s.}}{=}\frac{\exp\Big(k\hat{\theta}(\delta) \int_{[0,1]^{2k-2}} h_\sigma(z_1,\ldots,z_k) \prod_{a=2}^k g(z_a) d\mu(z_a)\Big)}{\int_{[0,1]^2} \exp\Big(k\hat{\theta}(\delta) \int_{[0,1]^{2k-2}} h_\sigma(z_1,\ldots,z_k) \prod_{a=2}^k g(z_a) d\mu(z_a)\Big)d\mu(z_1)}.\]
			
			\item $G(\sigma,\mu,\cdot)$ is continuous on $(t_{\sigma}(\mu),\delta_c)$. Further, conditioned on the event $\left\{ t_\sigma(\pi_{n,\mu})\ge \delta\right\}$, we have \[\pi_{n,\mu}\xrightarrow{P}\mathcal{O}\left(\nu_{\sigma,\mu,\hat{\theta}(\delta)}\right).\]
		\end{enumerate}
	\end{enumerate}
	
\end{theorem}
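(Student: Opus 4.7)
Part (i) is a Varadhan-type combination of the LDP in Theorem \ref{thm:main} with the continuity of pattern densities on $\widetilde{\cM}$. Using $I_\mu(\gamma) = \inf\{D(\nu|\mu) : \mathcal{O}(\nu) = \gamma\}$ together with the identity $t_\sigma(\mathcal{O}(\nu)) = t_\sigma(\nu)$, I first establish
\[\inf_{\gamma \in \mathcal{P} : t_\sigma(\gamma) \geq \delta} I_\mu(\gamma) = G(\sigma,\mu,\delta).\]
Since $\{t_\sigma \geq \delta\}$ is closed and its interior contains $\{t_\sigma > \delta\}$, and since the infimum of $I_\mu$ over the latter equals $\lim_{\delta' \downarrow \delta} G(\sigma,\mu,\delta')$, the assumed right continuity of $G$ at $\delta$ matches the LDP upper and lower bounds in Theorem \ref{thm:main} to the common value $-G(\sigma,\mu,\delta)$. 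For the convergence to $\mathcal{O}(\mathcal{G}(\sigma,\mu,\delta))$, fix $\epsilon > 0$ and consider the closed set $K_\epsilon := \{t_\sigma \geq \delta\} \setminus \{d(\cdot, \mathcal{O}(\mathcal{G}(\sigma,\mu,\delta))) < \epsilon\}$. Compactness of $\mathcal{P}$ with the lower semi-continuity of $I_\mu$ forces $\inf_{K_\epsilon} I_\mu > G(\sigma,\mu,\delta)$, so the LDP upper bound makes the conditional probability $\P(\Em(\pi_{n,\mu}) \in K_\epsilon \mid t_\sigma(\pi_{n,\mu}) \geq \delta)$ decay exponentially.

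For part (ii)(a), I set $\phi(\theta) := t_\sigma(\nu_{\sigma,\mu,\theta})$ on $(-\theta_c,\theta_c)$. By Theorem \ref{thm:gibbs_random2}(ii), $\phi$ is continuous and non-decreasing, and since $\nu_{\sigma,\mu,0} = \mu$ (the unique minimizer of $D(\cdot|\mu)$), $\phi(0) = t_\sigma(\mu)$. Proposition \ref{prop:cnc}(ii) under the CNC hypothesis yields $\phi(\theta) > t_\sigma(\mu)$ for all $\theta \in (0, \theta_c)$, so $\delta_c = \lim_{\theta \to \theta_c^-} \phi(\theta) > t_\sigma(\mu)$. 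For $\delta \in (t_\sigma(\mu), \delta_c)$, pick $\theta^* \in (0,\theta_c)$ with $\phi(\theta^*) > \delta$; the intermediate value theorem on $[0,\theta^*]$ produces roots of $\phi - \delta$, and closedness of the root set (by continuity of $\phi$) gives a maximum $\hat\theta(\delta) \in (0,\theta^*] \subset (0,\theta_c)$.

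The heart of the proof is the Lagrange-duality step in (b). Since $\hat\theta(\delta) \in (-\theta_c,\theta_c)$, Theorem \ref{thm:gibbs_random2}(i) asserts that $\nu_{\sigma,\mu,\hat\theta(\delta)}$ is the unique maximizer over $\cM$ of $\hat\theta(\delta) t_\sigma(\nu) - D(\nu|\mu)$. Hence for every $\nu \in \cM$ with $t_\sigma(\nu) \geq \delta = t_\sigma(\nu_{\sigma,\mu,\hat\theta(\delta)})$,
\[D(\nu|\mu) \geq D(\nu_{\sigma,\mu,\hat\theta(\delta)}|\mu) + \hat\theta(\delta)\bigl(t_\sigma(\nu) - \delta\bigr) \geq D(\nu_{\sigma,\mu,\hat\theta(\delta)}|\mu),\]
since $\hat\theta(\delta) > 0$. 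This identifies $\nu_{\sigma,\mu,\hat\theta(\delta)} \in \mathcal{G}(\sigma,\mu,\delta)$; any other $\tilde\nu \in \mathcal{G}(\sigma,\mu,\delta)$ saturates the displayed inequality, forcing $t_\sigma(\tilde\nu) = \delta$ and making $\tilde\nu$ a maximizer of the Gibbs functional, whence $\tilde\nu = \nu_{\sigma,\mu,\hat\theta(\delta)}$ by uniqueness. Part (c) is then immediate from the Euler--Lagrange identity of Theorem \ref{thm:gibbs_random}(ii) applied at $\theta = \hat\theta(\delta)$.

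Part (d) combines continuity of $\hat\theta$ with a free-energy identity. Continuity and strict monotonicity of $\hat\theta$ on $(t_\sigma(\mu), \delta_c)$ follow from those of $\phi$: subsequential limits of $\hat\theta(\delta_n)$ as $\delta_n \downarrow \delta$ satisfy $\phi(\cdot) = \delta$ and hence lie $\leq \hat\theta(\delta)$, while monotonicity gives the reverse inequality, and left-continuity is analogous. Using $D(\nu_{\sigma,\mu,\theta}|\mu) = \theta \phi(\theta) - F(\sigma,\mu,\theta)$ with $F$ convex (and so continuous) on $(-\theta_c,\theta_c)$ and $\phi$ continuous, continuity of $G(\sigma,\mu,\cdot) = D(\nu_{\sigma,\mu,\hat\theta(\cdot)}|\mu)$ on $(t_\sigma(\mu), \delta_c)$ follows, and the convergence claim becomes part (i) applied with the singleton $\mathcal{G}(\sigma,\mu,\delta) = \{\nu_{\sigma,\mu,\hat\theta(\delta)}\}$. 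The main obstacle is the duality step in (b): one must guarantee that $\hat\theta(\delta) \in (-\theta_c,\theta_c)$ so that Theorem \ref{thm:gibbs_random2}'s uniqueness is available, which is precisely why the assumption $\delta < \delta_c$ is built into the statement.
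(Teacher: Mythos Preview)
Your proof follows essentially the same route as the paper's: part (i) via the conditional LDP, part (ii)(b) via the Lagrange-duality inequality (the paper phrases it as a contradiction but the content is identical), part (ii)(c) from Theorem~\ref{thm:gibbs_random}(ii), and part (ii)(d) via the identity $G(\sigma,\mu,\delta)=\hat\theta(\delta)\,\delta-F(\sigma,\mu,\hat\theta(\delta))$. Two small points deserve care.

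First, in part (i) you assert that $\{t_\sigma\ge\delta\}$ is closed in $\mathcal{P}$, but $t_\sigma$ is only continuous on $\cM$ (Lemma~\ref{lem:rate}), not on all of $\mathcal{P}$. This is harmless because $I_\mu=\infty$ off $\widetilde{\cM}$, so the infimum over the closure of $K_\epsilon$ equals the infimum over $K_\epsilon\cap\widetilde{\cM}$; the paper absorbs this technicality into the hypothesis of Lemma~\ref{lem:ldp}(v) (continuity of $T$ on $\{J<\infty\}$), and you should make the same move explicitly.

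Second, in part (ii)(d) the claim that left-continuity of $\hat\theta$ ``is analogous'' is not correct as stated. For $\delta_n\uparrow\delta$ you get $\hat\theta(\delta_n)\uparrow\theta'$ with $\phi(\theta')=\delta$, hence $\theta'\le\hat\theta(\delta)$, but monotonicity only gives $\hat\theta(\delta_n)<\hat\theta(\delta)$, which does not yield $\theta'\ge\hat\theta(\delta)$; if $\phi$ is constant on an interval $[\theta_1,\theta_2]$ at level $\delta$ then $\hat\theta$ genuinely jumps there. The conclusion about $G$ is nonetheless correct: either observe (as the paper does, in Definition~\ref{def:f}) that left-continuity of $G$ holds in general by lower semi-continuity of $D(\cdot|\mu)$ and compactness of sublevel sets, so only right-continuity needs proving; or note that by Theorem~\ref{thm:gibbs_random2}(iii), $\phi(\theta_1)=\phi(\theta_2)$ forces $\nu_{\theta_1}=\nu_{\theta_2}$, so $\theta\mapsto D(\nu_\theta|\mu)$ is constant on any plateau of $\phi$ and the jump in $\hat\theta$ does not affect $G$.
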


\begin{remark}\label{rem:o_directict}
	Theorem \ref{thm:constrain} is stated for upper tail conditioning, i.e.\ $\left\{ t_\sigma(\pi_{n,\mu})\ge \delta\right\}$, but all the results hold also for lower tail conditioning, i.e., $\left\{ t_\sigma(\pi_{n,\mu})\le \delta\right\}$, with the obvious adaptation. 
\end{remark}

\begin{remark}\label{rem:ass_is_needed_2}
	The above theorem requires the \cnc\ assumption for part $(ii)$ to hold, as demonstrated in the counterexample given in Proposition \ref{prop:xi}. There, a permuton $\xi$ is exhibited that is \cc\ for $\sigma=21$ and such that the set $\mathcal{O}(\mathcal{G}(21, \xi,\delta))$ has cardinality $2$ for all $\delta \in (t_{21}(\xi),\alpha_{21}(\xi))$.
\end{remark}

\subsection{Inversions: some concrete examples}\label{sect:1.5}

In this section we focus on the specific case when $\sigma$ is an inversion, i.e., $\sigma=21$. 

\subsubsection{Non-uniqueness of the optimizers}

We start by discussing the non-uniqueness of the optimizer in Theorem \ref{thm:gibbs_random2} (resp.\ Theorem \ref{thm:constrain}) in the absence of the assumption $|\theta|<\theta_c$ (resp.\ $\mu$ is \cnc). Recall also Remarks \ref{rem:ass_is_needed_1} and \ref{rem:ass_is_needed_2}. We first need the following definition.

\begin{definition}\label{defn:counter}
	Let $\xi\in \widetilde{\cM}$ be a permuton defined as follows. Suppose $X\sim U[0,1]$. If $X<\frac{1}{2}$, set $Y=\frac{1}{2}-X$, and if $X>\frac{1}{2}$, set $Y=\frac{3}{2}-X$. Then $(X,Y)$ is a random vector on $[0,1]^2$, and has uniform marginals. Let $\xi$ denote the law of $(X,Y)$. 
	The support of $\xi$ is shown in Figure~\ref{fig:counter}.
	
	Similarly, we set $\xi_{11},\xi_{22}\in \cM$ to be the uniform probability measures on the diagonals of the boxes $D_{11}=[0,\frac12]^2$ and $D_{22}=[\frac12,1]^2$, respectively. Note that $\xi_{11}$ and $\xi_{22}$ are not permutons and that $\xi=\frac12 \xi_{11} +\frac 12 \xi_{22}$.
\end{definition}

\begin{figure}[ht]
	\begin{center}
		\includegraphics[width=2.5in, height=2.5in]{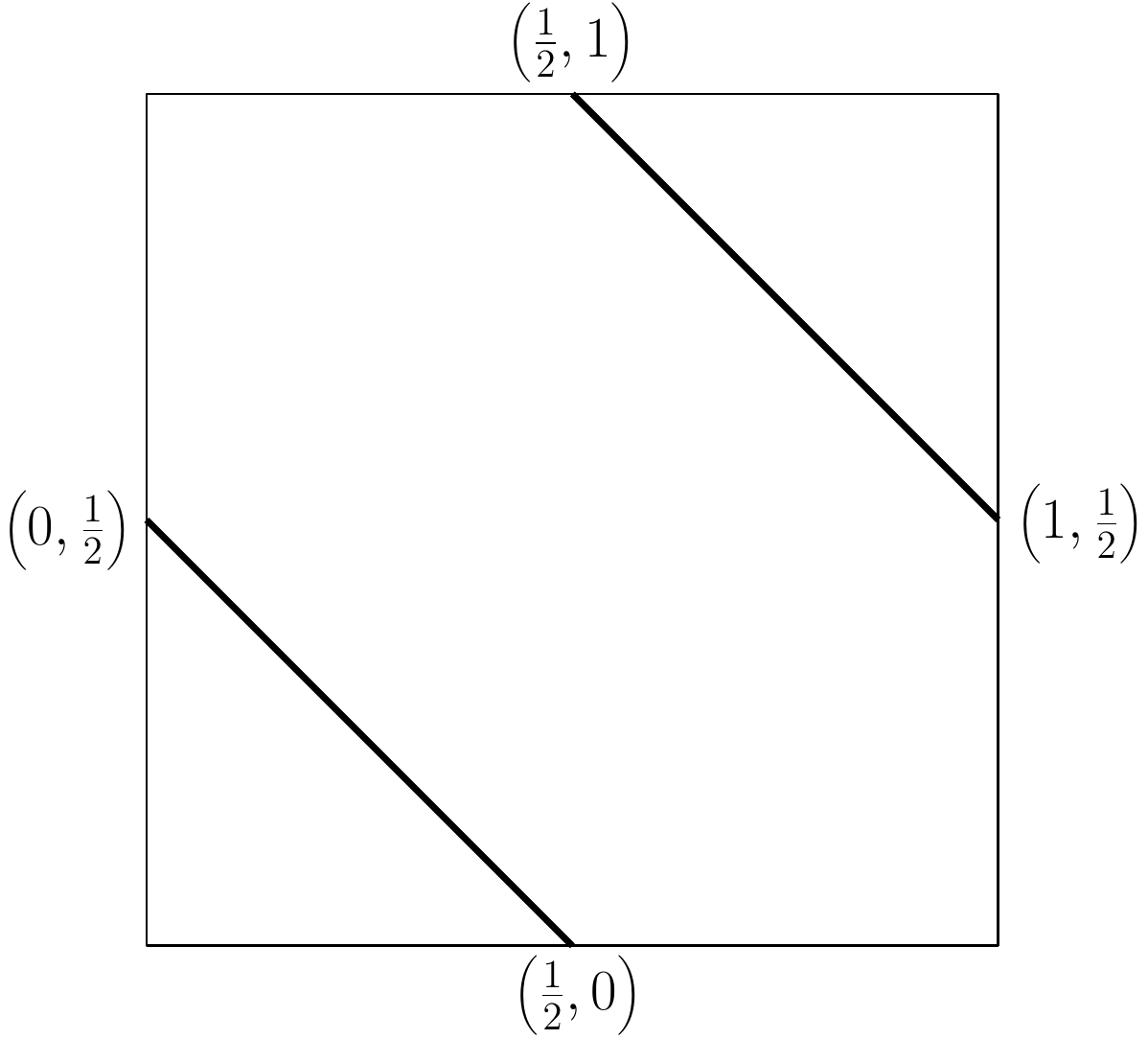}
		\caption{The support of the permuton $\xi$ introduced in Definition \ref{defn:counter}.}
		\label{fig:counter}
	\end{center}
\end{figure}

\begin{proposition}\label{prop:counter}
	Let $\xi$ be the permuton introduced in Definition \ref{defn:counter}. Then the set of optimizers $\mathcal{F}(21,\xi,\theta)$ of the optimization problem in \eqref{eq:opt} satisfies
	\begin{align*}
		\mathcal{F}(21,\xi,\theta)= \begin{cases}
			\{\xi\},  & \text{ if }\theta \le 1, \\
			\left\{ \dfrac{1+m_\theta}{2} \xi_{11}+\dfrac{1-m_\theta}{2}\xi_{22}, \dfrac{1-m_\theta}{2} \xi_{11}+\dfrac{1+m_\theta}{2}\xi_{22}\right\}, & \text{ if }\theta>1,
		\end{cases}
	\end{align*}
	where $m_\theta$ is the unique positive root of the equation $x=\tanh(\theta x)$ for $\theta>1$. 
	Moreover, the set $\mathcal{O}(\mathcal{F}(21,\xi,\theta))$ has cardinality 2 for all $\theta>1$.
\end{proposition}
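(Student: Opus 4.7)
The plan is to reduce the optimization over $\mathcal{M}$ to a one-dimensional problem that is essentially Curie--Weiss, and then to analyze the resulting mean-field equation.

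\emph{Step 1: Reduction to measures supported on $\supp(\xi)$.} Since $D(\nu|\xi)=+\infty$ unless $\nu\ll\xi$, we may restrict to $\nu$ supported on $D_{11}\cup D_{22}$, where $D_{11}$ and $D_{22}$ are the two antidiagonal segments in Definition \ref{defn:counter}. Any such $\nu$ decomposes as $\nu=p\,\nu_1+(1-p)\,\nu_2$, where $p=\nu(D_{11})\in[0,1]$ and $\nu_i$ is a probability measure on $D_{ii}$. Writing $\xi=\tfrac12\xi_{11}+\tfrac12\xi_{22}$ and using that $D_{11}$ and $D_{22}$ are disjoint, a direct computation gives
\begin{equation*}
D(\nu|\xi)=p\log(2p)+(1-p)\log\bigl(2(1-p)\bigr)+p\,D(\nu_1|\xi_{11})+(1-p)\,D(\nu_2|\xi_{22}).
\end{equation*}

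\emph{Step 2: Computing $t_{21}(\nu)$.} The key observation is that both antidiagonals have \emph{negative} slope, so any two distinct points lying on the same antidiagonal automatically form an inversion, while any point of $D_{11}$ and any point of $D_{22}$ form a non-inversion (the $D_{11}$ point has smaller $x$ and smaller $y$). Consequently, if $(X_1,Y_1),(X_2,Y_2)$ are i.i.d.\ $\nu$, then
\begin{equation*}
t_{21}(\nu)=\P(X_1<X_2,\,Y_1>Y_2)=p^2+(1-p)^2,
\end{equation*}
independently of the choice of $\nu_1,\nu_2$.

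\emph{Step 3: Reduction to a one-parameter problem.} Since $t_{21}(\nu)$ depends only on $p$, and $D(\nu|\xi)$ is minimized for fixed $p$ by choosing $\nu_i=\xi_{ii}$ (as $D(\nu_i|\xi_{ii})\ge 0$ with equality iff $\nu_i=\xi_{ii}$), every maximizer must take the form $\nu=p\,\xi_{11}+(1-p)\,\xi_{22}$. Setting $m:=2p-1\in[-1,1]$, the objective becomes (up to an additive constant)
\begin{equation*}
\phi(m):=\frac{\theta m^2}{2}-\frac{1+m}{2}\log\frac{1+m}{2}-\frac{1-m}{2}\log\frac{1-m}{2},
\end{equation*}
which is the standard Curie--Weiss free energy.

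\emph{Step 4: Analyzing the mean-field equation.} The stationarity condition $\phi'(m)=0$ reads
\begin{equation*}
\theta m=\operatorname{arctanh}(m),\qquad\text{equivalently}\qquad m=\tanh(\theta m).
\end{equation*}
Classical Curie--Weiss analysis (cf.\ the convexity/concavity of $\tanh$ and the second-derivative check $\phi''(0)=\theta-1$) yields: for $\theta\le 1$ the unique maximizer is $m=0$, corresponding to $\nu=\xi$; for $\theta>1$ there are exactly two global maximizers $m=\pm m_\theta$, where $m_\theta>0$ is the unique positive root of $m=\tanh(\theta m)$, and $m=0$ becomes a local minimum. Translating back to $p$ yields exactly the two measures listed in the statement.

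\emph{Step 5: The map $\mathcal{O}$ preserves the two-fold degeneracy.} For $\nu_p:=p\,\xi_{11}+(1-p)\,\xi_{22}$, both marginals are uniform on $[0,1/2]$ with weight $p$ and on $[1/2,1]$ with weight $1-p$, so both marginal CDFs agree and equal $F_p$, say. Applying $(x,y)\mapsto(F_p(x),F_p(y))$ sends $D_{11}$ to the antidiagonal of $[0,p]^2$ and $D_{22}$ to the antidiagonal of $[p,1]^2$, so $\mathcal{O}(\nu_p)$ is the permuton consisting of uniform mass $p$ on the antidiagonal of $[0,p]^2$ and uniform mass $1-p$ on the antidiagonal of $[p,1]^2$. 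The location $p$ of the break point is recoverable from $\mathcal{O}(\nu_p)$ (e.g.\ as the unique $x$-coordinate of the jump in support), so $\mathcal{O}(\nu_p)\ne\mathcal{O}(\nu_{1-p})$ whenever $p\ne 1/2$, i.e.\ whenever $m_\theta\ne 0$, which is exactly when $\theta>1$.

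\emph{Main obstacle.} The only non-routine step is the bifurcation analysis in Step 4; however, the problem is literally Curie--Weiss once one checks that $t_{21}$ is insensitive to the conditional laws on the antidiagonals. That insensitivity (Step 2) is the structural reason the counterexample works, and verifying it carefully is the crux of the argument.
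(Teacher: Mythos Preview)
Your argument is correct and follows essentially the same route as the paper: decompose $\nu\ll\xi$ according to the two antidiagonals, observe that $t_{21}(\nu)=p^2+(1-p)^2$ depends only on $p=\nu(D_{11})$, optimize out the conditional laws to $\xi_{11},\xi_{22}$, and recognize the residual one-parameter problem as Curie--Weiss. One cosmetic slip: in Step~2 the middle expression $\P(X_1<X_2,\,Y_1>Y_2)$ equals $\tfrac12\bigl(p^2+(1-p)^2\bigr)$, not $p^2+(1-p)^2$; the correct identity is $t_{21}(\nu)=\E\bigl[h_{21}\bigl((X_1,Y_1),(X_2,Y_2)\bigr)\bigr]$ with the symmetrized indicator, and your verbal reasoning (same segment $\Rightarrow$ inversion, different segments $\Rightarrow$ non-inversion) already gives the right answer directly.

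The only substantive difference from the paper is Step~5. The paper invokes a general lemma (Lemma~\ref{lem:dmat}) that controls how $\mathcal{O}$ interacts with the block decomposition $D_{11},D_{12},D_{21},D_{22}$ and applies whenever $\nu(D_{11})\ne\nu(D_{22})$ and $\nu(D_{12}\cup D_{21})$ is small (here zero). You instead compute $\mathcal{O}(\nu_p)$ explicitly as the permuton supported on the antidiagonals of $[0,p]^2$ and $[p,1]^2$, from which injectivity in $p$ is immediate. Your argument is more transparent in this specific case; the paper's lemma is reusable and is in fact needed later (Theorem~\ref{ppn:counter}) where the optimizers are not known explicitly.
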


\begin{proposition}\label{prop:xi}
	Let $\xi$ be the permuton introduced in Definition \ref{defn:counter}. Then $\xi$ is \cc\ and for all $\delta \in (t_{21}(\xi),\alpha_{21}(\xi))=(\frac12,1)$, and the set of optimizers $\mathcal{G}(21,\xi,\delta)$ of the optimization problem in \eqref{eq:f} satisfies
	\[\mathcal{G}(21,\xi,\delta)=\left\{\frac{1+\sqrt{2\delta-1}}{2}\xi_{11}+\frac{1-\sqrt{2\delta-1}}{2}\xi_{22}, \frac{1-\sqrt{2\delta-1}}{2}\xi_{11}+\frac{1+\sqrt{2\delta-1}}{2}\xi_{22}\right\}.\]
	Moreover, the set $\mathcal{O}(\mathcal{G}(21,\xi,\delta))$ has cardinality $2$ for all $\delta \in (t_{21}(\xi),\alpha_{21}(\xi))$.
\end{proposition}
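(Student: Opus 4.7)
The plan is to exploit the mixture structure $\xi = \tfrac12\xi_{11}+\tfrac12\xi_{22}$ with $\xi_{11},\xi_{22}$ mutually singular in order to reduce the optimization to a one-dimensional problem in the scalar $p:=\nu(D_{11})$. I would begin by verifying directly from the anti-diagonal geometry that $\xi$ is CC: given a first i.i.d.\ sample on $D_{11}$, a second independent sample lies on $D_{11}$ with probability $1/2$ and then necessarily forms pattern $21$ (coordinates on $D_{11}$ satisfy $y=\tfrac12-x$), while on $D_{22}$ (also with probability $1/2$) it necessarily forms pattern $12$. Hence $\mathbb{P}(\pi_{(X_1,Y_1),(X_2,Y_2)}=21\mid (X_1,Y_1))=\tfrac12$ almost surely, so $\xi$ is CC with $t_{21}(\xi)=\tfrac12$.

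Any candidate $\nu$ with $D(\nu|\xi)<\infty$ satisfies $\nu\ll\xi$, hence is supported on $D_{11}\cup D_{22}$. Denoting by $\nu_{ii}$ the conditional probability measure of $\nu$ on $D_{ii}$ (so $\nu=p\nu_{11}+(1-p)\nu_{22}$), the mutual singularity of $\xi_{11}$ and $\xi_{22}$ combined with the Radon--Nikodym chain rule gives
\[
D(\nu|\xi)=\log 2 - H(p) + p\,D(\nu_{11}|\xi_{11})+(1-p)\,D(\nu_{22}|\xi_{22}),
\]
where $H(p)=-p\log p-(1-p)\log(1-p)$. Moreover, since two samples drawn from the same diagonal always realize an inversion while samples from different diagonals never do, one checks $t_{21}(\nu)=p^2+(1-p)^2$ independently of $\nu_{11}$ and $\nu_{22}$. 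Since $t_{21}$ attains the value $1$ at $\xi_{11}$ and $\xi_{22}$, this also yields $\alpha_{21}(\xi)=1$, so the relevant interval is indeed $(\tfrac12,1)$.

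For fixed $p$, both conditional divergences vanish exactly at $\nu_{ii}=\xi_{ii}$, so the problem reduces to minimizing the convex function $p\mapsto \log 2 - H(p)$, which is symmetric about $p=\tfrac12$, over $\{p\in[0,1]: p^2+(1-p)^2\geq\delta\}=[0,p_-]\cup[p_+,1]$, where $p_\pm=\tfrac{1\pm\sqrt{2\delta-1}}{2}$. Convexity together with symmetry forces the minimum to be attained precisely at the two endpoints $p=p_-$ and $p=p_+$, giving exactly the two claimed optimizers $\nu_\pm=p_\pm\xi_{11}+(1-p_\pm)\xi_{22}$.

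For the cardinality of $\mathcal{O}(\mathcal{G}(21,\xi,\delta))$, I would compute the marginal CDFs of $\nu_\pm$ (piecewise linear with slopes $2p_\pm$ on $[0,\tfrac12]$ and $2(1-p_\pm)$ on $[\tfrac12,1]$) and then apply $(F_1,F_2)$ to the two anti-diagonals. A short calculation shows that $\mathcal{O}(\nu_+)$ is supported on the union of the two segments $\{u+v=p_+\}\cap [0,p_+]^2$ and $\{u+v=1+p_+\}\cap [p_+,1]^2$, and analogously for $\mathcal{O}(\nu_-)$ with $p_-=1-p_+$ in place of $p_+$. Since $p_+\neq p_-$ for every $\delta>\tfrac12$, these supports are distinct and hence so are the two permutons. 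The only non-routine point is the careful bookkeeping of the KL chain-rule decomposition in the second paragraph; once that formula is in hand, the rest is a direct computation based on the explicit anti-diagonal geometry.
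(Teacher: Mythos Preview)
Your argument is correct and follows essentially the same route as the paper: decompose $\nu\ll\xi$ into its $D_{11}$ and $D_{22}$ parts, use the KL chain rule to separate the scalar $p=\nu(D_{11})$ from the conditional pieces, observe that $t_{21}(\nu)=p^2+(1-p)^2$ depends only on $p$, and then minimize $\log 2 - H(p)$ over the quadratic constraint to obtain the two optimizers at $p_\pm=\tfrac{1\pm\sqrt{2\delta-1}}{2}$.

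The one place where you diverge from the paper is the final step, showing $|\mathcal{O}(\mathcal{G}(21,\xi,\delta))|=2$. You compute $\mathcal{O}(\nu_\pm)$ explicitly by pushing the two anti-diagonals through the piecewise-linear marginal CDFs and obtain two permutons with visibly distinct supports. The paper instead invokes a general lemma (Lemma~\ref{lem:dmat}): whenever $\nu(D_{11})\neq\nu(D_{22})$ and $\nu(D_{12}\cup D_{21})=0$, the permuton $\mathcal{O}(\nu)$ differs from its image under the central symmetry $(x,y)\mapsto(1-x,1-y)$. Your explicit computation is arguably more transparent in this concrete setting, while the paper's lemma is reusable (it is also applied in the proof of Theorem~\ref{ppn:counter}, where no explicit formula for the optimizers is available).
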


\begin{remark}
	Proposition \ref{prop:counter} shows that there is a non trivial phase transition in the above model. On one hand this is not surprising, as the above model is essentially equivalent to the Curie--Weiss--Ising model (if we just track the box labels of the $n$ points), which has a non-trivial phase transition. (More explanations are given in the proof of Proposition \ref{prop:counter}.) In a similar manner, one can consider the following measure on the unit square: Partition the unit square into $q^2$ blocks of equal size, and  place an antidiagonal line on each of the $q$ diagonal blocks. This produces a Gibbs random permutation which is essentially equivalent to the Curie--Weiss--Potts model \cite{costeniuc2005complete}. The existence of a phase transition in these models is in contrast to what happens for the Mallows model, where there is no phase transition in the parameter $\theta$, and $\mathcal{F}(21,\lambda,\theta)$ has cardinality $1$ always (see \cite{Shannon,starr2018phase}). 
\end{remark}

\subsubsection{Interchanging conditioning events and limits}

As another application, we consider the following natural question. Consider the following two situations:
\begin{enumerate}[$(i)$]
	\item Start with a sequence of permutations $(\pi_{n,\sigma,\mu,\theta})_{n\ge 1}$, i.e., a sequence of Gibbs random permutations biased by its proportion of patterns $\sigma$, and condition the sequence to have an atypical proportion of inversions (compared to $\mathbb E[t_{\sigma}(\pi_{n,\sigma,\mu,\theta})]$);
	\item Assume now that $\theta\in(-\theta_c,\theta_c)$ and consider the limit of $\pi_{n,\sigma,\mu,\theta}$, that is $\nu_{\theta}:=\nu_{\sigma,\mu,\theta}$ (thanks to Theorem \ref{thm:gibbs_random2} part $(i)$). Now start with a sequence of  $\nu_{\theta}$-random permutations, that is, $(\pi_{n,\nu_{\theta}})_{n\ge 1}$, and condition the sequence to have the same atypical proportion of inversions as in situation $(i)$.
\end{enumerate}
Do the two conditioned sequences from situation $(i)$ and $(ii)$ have the same limit?
Our next result shows that the answer is negative already in the case when $\mu=\lambda$ is the Lebesgue measure on the unit square $[0,1]^2$ and $\sigma=21$ is an inversion.

\begin{theorem}\label{thm:gibbs4} 
	The following conclusions hold:
	\begin{enumerate}[(i)]
		\item Conditioned on the event $\{t_{\sigma}(\pi_{n,\sigma,\lambda,\theta}) \le \frac1{k!}\}$, the sequence $\pi_{n,\sigma,\lambda,\theta}$ converges in probability to $\lambda$.
		
		\item Let $\nu_{\theta}:=\nu_{21,\lambda,\theta}$. For all $\theta\in (0,\theta_c)$, the sequence $\pi_{n,\nu_\theta}$ conditioned on $\{t_{21}(\pi_{n,\nu_\theta})\le \frac1{2}\}$ converges in probability to some measure in $\widetilde{\cM}$, which is not the Lebesgue measure.
	\end{enumerate}
\end{theorem}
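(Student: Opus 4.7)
\textbf{Proof plan for Theorem \ref{thm:gibbs4}.}

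\emph{Part (i).} I would invoke Corollary \ref{cor:exp}(i) to obtain the LDP for $\pi_{n,\sigma,\lambda,\theta}$ with rate function $I_{\sigma,\theta}(\gamma) = D(\gamma|\lambda) - \theta t_\sigma(\gamma) - c$ on $\widetilde{\mathcal{M}}$. Since $t_\sigma(\lambda) = 1/k!$ (because $k$ i.i.d.\ uniform points induce the uniform distribution on $S_k$) and $D(\gamma|\lambda) \ge 0$ with equality only at $\gamma = \lambda$, for $\theta \ge 0$ the bound
\begin{equation*}
I_{\sigma,\theta}(\gamma) \;=\; D(\gamma|\lambda) - \theta t_\sigma(\gamma) - c \;\ge\; -\theta/k! - c \;=\; I_{\sigma,\theta}(\lambda)
\end{equation*}
holds on the constraint set $\{\gamma \in \widetilde{\mathcal{M}} : t_\sigma(\gamma) \le 1/k!\}$, with equality only at $\lambda$. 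Hence $\lambda$ is the unique minimizer of the rate function on the conditioning set, and the standard conditional-LDP argument (using the continuity of $t_\sigma$ on $\widetilde{\mathcal{M}}$) yields the claimed convergence in probability to $\lambda$.

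\emph{Part (ii), setup.} The plan is to apply Theorem \ref{thm:main} with $\mu = \nu_\theta$ to obtain an LDP for $\pi_{n,\nu_\theta}$, and then invoke the lower-tail version of Theorem \ref{thm:constrain}(ii) (see Remark \ref{rem:o_directict}). Two hypotheses must first be verified. First, $\lambda$ is \cnc{} for $21$, since $p_\lambda(x,y) := \int h_{21}((x,y),z')\,dz' = x+y-2xy$ is non-constant; Proposition \ref{prop:cnc}(ii) then gives $t_{21}(\nu_\theta) > 1/2$, so $\{t_{21}\le 1/2\}$ is atypical for $\pi_{n,\nu_\theta}$. Second, $\nu_\theta$ itself is \cnc{} for $21$: its density $g_\theta$ is strictly positive (so $\nu_\theta$ has full support $[0,1]^2$), while the function $z \mapsto \int h_{21}(z,z')g_\theta(z')\,dz'$ takes the value $0$ at $(0,0)$ and the value $1$ at $(1,0)$, hence is not $\nu_\theta$-a.s.\ constant. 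Theorem \ref{thm:constrain}(ii) then produces a unique minimizer $\nu^* = \nu_{21,\nu_\theta,-\beta^*}$ of $D(\cdot|\nu_\theta)$ on $\{\nu \in \mathcal{M} : t_{21}(\nu) \le 1/2\}$ for some $\beta^* > 0$ (necessarily positive, since the unconstrained minimum $\nu_\theta$ violates the constraint), together with $\pi_{n,\nu_\theta} \xrightarrow{P} \mathcal{O}(\nu^*)$ under the conditioning.

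\emph{Part (ii), main obstacle.} The hard part is proving $\mathcal{O}(\nu^*) \ne \lambda$. I plan to argue by contradiction: if the conditional limit were $\lambda$, then $\nu^* = \lambda$ as measures, and the Euler-Lagrange equation from Theorem \ref{thm:constrain}(ii)(c) applied to $d\lambda/d\nu_\theta = 1/g_\theta$---together with the identity $\int h_{21}(z,z')(1/g_\theta(z'))\,d\nu_\theta(z') = \int h_{21}(z,z')\,dz' = p_\lambda(z)$---forces the explicit form
\begin{equation*}
g_\theta(z) \;=\; C\, \exp\!\big(2\beta^* p_\lambda(z)\big)
\end{equation*}
for some $C > 0$. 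Substituting this back into the Euler-Lagrange equation for $\nu_\theta$ itself (Theorem \ref{thm:gibbs_random}(ii) with base $\lambda$) forces the auxiliary function $G(z) := \int h_{21}(z,z')\,e^{2\beta^* p_\lambda(z')}\,dz'$ to be an affine function of $p_\lambda(z)$. Evaluating this linear relation at $z=(0,0)$ (to pin the intercept to $0$), at $z=(1,0)$ (giving the slope $= Z_{2\beta^*}$), and at $z=(1/2,1/2)$ collapses everything to the equality
\begin{equation*}
\int_{[0,1/2]^2} e^{2\beta^* p_\lambda}\,dx\,dy \;=\; \int_{[1/2,1]\times[0,1/2]} e^{2\beta^* p_\lambda}\,dx\,dy.
\end{equation*}
However, an analysis of the level sets of $p_\lambda$ shows $p_\lambda \le 1/2$ on $[0,1/2]^2$ (strictly, almost everywhere) and $p_\lambda \ge 1/2$ on $[1/2,1]\times[0,1/2]$ (strictly, almost everywhere), so for $\beta^* > 0$ the left integral is strictly less than $e^{\beta^*}/4$ while the right is strictly greater, a contradiction. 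The main obstacle is thus extracting the right self-consistency identity at the three probe points and recognizing the clean geometric partition of $p_\lambda$ into its sub- and super-$1/2$ regions.
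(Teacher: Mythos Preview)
Your argument for part~(i) identifies the correct minimizer but skips the verification that the conditional LDP actually applies: Lemma~\ref{lem:ldp}(v) requires the infimum of the rate function over $\{t_\sigma<1/k!\}$ to equal that over $\{t_\sigma\le 1/k!\}$. Since $\lambda$ sits exactly on the boundary $t_\sigma=1/k!$, this is not automatic; the paper establishes it by producing a sequence $\nu_\alpha$ (the Gibbs optimizers for $\alpha\to 0^-$) lying strictly inside $\{t_\sigma<1/k!\}$ with $D(\nu_\alpha|\lambda)-\theta t_\sigma(\nu_\alpha)\to -\theta/k!$.

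The substantive gap is in part~(ii). You assert that ``if the conditional limit were $\lambda$, then $\nu^*=\lambda$ as measures'', and every subsequent step (writing $d\nu^*/d\nu_\theta=1/g_\theta$, the identity $\int h_{21}(z,z')(1/g_\theta)\,d\nu_\theta=p_\lambda$, and the three-point probe) rests on this. But the conditional limit is $\mathcal{O}(\nu^*)$, and $\mathcal{O}(\nu^*)=\lambda$ only says that $\nu^*$ is a \emph{product} measure in $\mathcal{M}$, not that $\nu^*=\lambda$. The optimizer $\nu^*$ lives in $\mathcal{M}$, not $\widetilde{\mathcal{M}}$, so there is no a~priori reason for its marginals to be uniform; the symmetries of $\nu_\theta$ under $(x,y)\mapsto(y,x)$ and $(x,y)\mapsto(1-x,1-y)$ force the two marginals of $\nu^*$ to coincide and to be symmetric about $1/2$, but not to be uniform. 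Your Euler--Lagrange computation therefore does not get off the ground in the general case.

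The paper handles exactly this by assuming only that the density of $\nu^*$ w.r.t.\ Lebesgue factors as $a(x)b(y)$, inserting this into the Euler--Lagrange equation for $g=d\nu^*/d\nu_\theta$, and obtaining the explicit form
\[
f(x,y)=C\,a(x)b(y)\exp\!\Big(-2\beta\big[A(x)(1-B(y))+(1-A(x))B(y)\big]\Big),
\]
where $f=d\nu_\theta/d\lambda$ and $A,B$ are the cumulative integrals of $a,b$. Taking the mixed partial $\partial_x\partial_y\log f$ yields $4\beta\,a(x)b(y)$, while the Euler--Lagrange equation for $\nu_\theta$ itself (the Starr--Mallows PDE) forces $\partial_x\partial_y\log f=4\theta f$. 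Equating gives $f\propto ab$, i.e.\ $\nu_\theta$ is a product measure --- impossible for $\theta\ne 0$. Your three-point probe argument, while clean, only addresses the special case $a=b\equiv 1$ and does not extend. (You also omit the check that $1/2$ lies in the range where Theorem~\ref{thm:constrain}(ii) applies with base $\nu_\theta$; the paper handles this via \eqref{eq:check}, and even there only for $\theta$ small.)
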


Note that our result gives a more precise answer to the previous question. Part $(i)$ of the above theorem shows that
$\pi_{n,\sigma,\lambda,\theta}$, after suitable conditioning and taking a limit, converges to Lebesgue measure. In contrast, part $(ii)$ shows for $\sigma=21$ if the operations of conditioning and limit are interchanged, then the limit is not Lebesgue measure.

\subsubsection{Existence of a phase transition for a generalized version of the Mallows model}

As already mentioned, it was shown in \cite{Shannon,starr2018phase} that if $\sigma=21$ and $\mu=\lambda$, i.e.\ in the Mallows model, then $\mathcal{O}(\mathcal{F}(21,\lambda,\theta))=\{\nu_\theta\}$ is a singleton for all $\theta\in \R$.
A simple combination with our results (see Theorem \ref{ppn:counter} part $(ii)$ below), implies that also $\mathcal{O}(\mathcal{G}(21,\lambda,\delta))$ is a singleton for all $\delta\in [t_{21}(\lambda),\alpha_{21}(\lambda))$.
The striking feature of this model is the absence of phase transitions. Our next theorem shows that this phenomenon does not take place when one changes the base measure $\mu$ from the Lebesgue measure $\lambda$ to  some other fully supported probability measure of the unit square.

\begin{definition}\label{def:counter2}
	We define a permuton $\mu_1$ in $\widetilde{\mathcal{M}}$ as follows. Suppose $X\sim U[0,1]$. If $X<\frac{1}{2}$, set $Y\sim U[0,1/2]$, and if $X>\frac{1}{2}$, set $Y\sim U[1/2,1]$. Then $(X,Y)$ is a random vector on $[0,1]^2$, and has uniform marginals. Let $\mu_1\in \widetilde{\mathcal{M}}$ denote the law of $(X,Y)$. The support of $\mu_1$ is shown  in the middle of Figure \ref{fig:phase_trans}.
	Let now $\mu_0=\lambda$ be Lebesgue measure on $[0,1]^2$, and for any $\ell\in [0,1]$, set
	\[\mu_\ell:=(1-\ell)\mu_0+\ell\mu_1.\]
	The support of $\mu_\ell$ is shown in the left-hand side  of Figure \ref{fig:phase_trans}.
\end{definition}

\begin{theorem}\label{ppn:counter}
	For all $\ell\in [0,1]$, let $\mu_\ell$ be the permuton introduced in Definition \ref{def:counter2}. Then the following conclusions hold:
	
	\begin{enumerate}[(i)]
		\item For all $\ell\in [0,1]$, the permuton $\mu_\ell$ is \cnc, $t_{21}(\mu_\ell)=\frac{2-\ell}{4}$, and $\alpha_{21}(\mu_\ell)=1$.
		
		\item If $\ell=0$, the set $\mathcal{O}(\mathcal{G}(21,\mu_0,\delta))$ has cardinality $1$ for all $\delta\in [1/2,1)$.
		
		\item For all $\ell\in [0,1]$, there exists $\delta_c(\ell)>\frac{2-\ell}{4}$ such that  the set $\mathcal{O}(\mathcal{G}(21, \mu_\ell,\delta))$ has cardinality $1$ if $\delta\in [\frac{2-\ell}{4}, \delta_c(\ell))$.
		
		\item There exists $\ell_c<1$ such that the following is true. For all $\ell\in(\ell_c,1]$, there exists $\delta'_c(\ell)<1$ such that
		the  set $\mathcal{O}(\mathcal{G}(21, \mu_\ell,\delta))$ has cardinality at least $2$ if $\delta\in(\delta'_c(\ell),1)$.
		
	\end{enumerate}
\end{theorem}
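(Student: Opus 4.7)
Part (i) is computational. Expand $t_{21}(\mu_\ell)=\int h_{21}\,d\mu_\ell^{\otimes 2}$ by bilinearity into the three integrals $\int h_{21}\,d\mu_0^{\otimes 2}$, $\int h_{21}\,d(\mu_0\otimes\mu_1)$, and $\int h_{21}\,d\mu_1^{\otimes 2}$, which a direct block-wise computation evaluates to $\tfrac12$, $\tfrac38$, and $\tfrac14$ respectively (using in particular that two $\mu_1$-points drawn from different diagonal blocks form pattern $12$), summing to $(2-\ell)/4$. To verify $\alpha_{21}(\mu_\ell)=1$, exhibit for any $\delta<1$ an absolutely continuous measure with $t_{21}\ge\delta$: for $\ell<1$, a thin strip around $\{y=1-x\}$ (lying in the off-diagonal quadrants $\mathrm{LR},\mathrm{UL}$ where $\mu_\ell$ has density $1-\ell>0$); for $\ell=1$, a thin strip around the anti-diagonal inside either diagonal block. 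The \cnc\ property is verified by computing the conditional probability $p(x,y):=\int h_{21}\bigl((x,y),(x',y')\bigr)\,d\mu_\ell(x',y')$ and splitting the integral over the four quadrants: for $(x,y)\in[0,1/2]^2$ one obtains $p(x,y)=x+y-2(1+\ell)xy$, which is manifestly non-constant. Part (ii) combines Theorem \ref{thm:constrain} $(ii)$ with the Mallows uniqueness $|\mathcal{F}(21,\lambda,\theta)|=1$ of \cite{Shannon,starr2018phase}: the map $\theta\mapsto t_{21}(\nu_{21,\lambda,\theta})$ is then a continuous strictly increasing bijection $\mathbb{R}\to(0,1)$, so for each $\delta\in[\tfrac12,1)$ inverting gives a unique $\hat\theta(\delta)$ with $\mathcal{G}(21,\lambda,\delta)=\{\nu_{21,\lambda,\hat\theta(\delta)}\}$. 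Part (iii) is then a direct application of Theorem \ref{thm:constrain} $(ii)(b)$ with the \cnc\ property from (i), taking $\delta_c(\ell):=\sup_{\theta\in(-\theta_c,\theta_c)}t_{21}(\nu_{21,\mu_\ell,\theta})$.

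\textbf{Part (iv): symmetry and permuton distinction.} The key observation is that $\mu_\ell$ is invariant under the involution $T:(x,y)\mapsto(1-x,1-y)$, which swaps the diagonal quadrants $\mathrm{LL}\leftrightarrow\mathrm{UR}$ and the off-diagonal quadrants $\mathrm{LR}\leftrightarrow\mathrm{UL}$, preserving the density of $\mu_\ell$ (which depends only on the type of quadrant). Since $h_{21}$ is also $T$-invariant, if $\nu^*\in\mathcal{G}(21,\mu_\ell,\delta)$ then so is $T_*\nu^*$. The plan is to show that for $\ell$ close to $1$ and $\delta\in(\delta'_c(\ell),1)$, no $T$-invariant measure can be a minimizer, which immediately produces two distinct elements $\nu^*\neq T_*\nu^*$ in $\mathcal{G}$. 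To upgrade non-uniqueness to $\mathcal{O}(\mathcal{G})$, note that the divergence comparison below will force the minimizer to have $\nu^*(\mathrm{LR}\cup\mathrm{UL})\approx 0$ with asymmetric diagonal-block masses $p:=\nu^*(\mathrm{LL})\neq\tfrac12$; a direct rank-normalization shows $\mathcal{O}(\nu^*)$ is supported on the anti-diagonals of $[0,p]^2\cup[p,1]^2$, whereas $\mathcal{O}(T_*\nu^*)$ is supported on the anti-diagonals of $[0,1-p]^2\cup[1-p,1]^2$, which are distinct permutons for $p\neq\tfrac12$.

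\textbf{Divergence comparison and main obstacle.} The proof reduces to two estimates. For the \emph{upper bound}, the asymmetric candidate $\nu^\epsilon$ placing mass $p_\delta:=(1-\sqrt{2\delta-1})/2$ on a thin strip of width $\epsilon$ around the $\mathrm{LL}$ anti-diagonal and $1-p_\delta$ on an analogous strip around the $\mathrm{UR}$ anti-diagonal (with $p$ nudged by $O(\epsilon)$ to secure $t_{21}(\nu^\epsilon)\ge\delta$) satisfies $D(\nu^\epsilon|\mu_\ell)=-H(p_\delta)-\log(c\epsilon)-\log(1+\ell)+o(1)$, where $H$ is the binary entropy. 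For the \emph{lower bound}, enumerate the sixteen pair contributions $\nu(B_1)\nu(B_2)I(B_1,B_2)$ to $t_{21}(\nu)=\int h_{21}\,d\nu^{\otimes 2}$ across the four quadrants $B\in\{\mathrm{LL},\mathrm{LR},\mathrm{UL},\mathrm{UR}\}$; using the key identity $I(\mathrm{LL},\mathrm{UR})=0$, for $T$-invariant $\nu$ with $q:=\nu(\mathrm{LR}\cup\mathrm{UL})$ this yields the structural bound $t_{21}(\nu)\le\tfrac12+q-q^2/2$, so $\delta>\tfrac12$ forces $q\ge 1-\sqrt{2(1-\delta)}>0$; the data-processing inequality applied to the quadrant projection then gives
\[
	D(\nu|\mu_\ell)\ge q\log\frac{2q}{1-\ell}+(1-q)\log\frac{2(1-q)}{1+\ell}.
\]
As $\ell\uparrow 1$ with $q$ bounded below, this lower bound diverges to $+\infty$ while the asymmetric upper bound remains finite for fixed $\epsilon$; hence for any fixed $\delta_0\in(\tfrac12,1)$ there exists $\ell_c=\ell_c(\delta_0)<1$ such that no $T$-invariant $\nu$ can minimize for $\ell>\ell_c$, and setting $\delta'_c(\ell):=\delta_0$ completes the argument. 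The main obstacle is the lower bound for \emph{all} $T$-invariant candidates (not only those concentrated near anti-diagonals), which proceeds through the data-processing reduction to the four-point quadrant distribution combined with the pair-contribution enumeration; a minor technicality is the $O(\epsilon)$ defect in the asymmetric candidate's $t_{21}$, resolved by nudging $p$. Structurally, this phase transition is the Curie--Weiss transition in disguise: the four quadrant labels play the role of Ising spins with ferromagnetic coupling induced by the block structure of $\mu_1$, and $T$-symmetry breaking at large $\ell$ and $\delta$ is the permutonic manifestation of Ising magnetization.
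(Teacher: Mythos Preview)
Your arguments for parts (i)--(iii) are correct and essentially parallel the paper's, with a minor difference in (i): the paper obtains the \cnc\ property in one line from Proposition~\ref{prop:CNC21}(i) (the support of $\mu_\ell$ has nonempty interior, so $\mu_\ell$ cannot be \cc), whereas you compute the conditional probability directly.

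For part (iv), your strategy of ruling out $T$-invariant minimizers via a divergence comparison is different from the paper's, and the first half of it is sound: the structural bound $t_{21}(\nu)\le \tfrac12+q-\tfrac{q^2}{2}$ for $T$-invariant $\nu$, together with data processing on the quadrant coarsening, does force any $T$-invariant feasible $\nu$ to have $D(\nu|\mu_\ell)\to\infty$ as $\ell\uparrow 1$, so the minimizer $\nu^*$ is not $T$-invariant. The paper takes a different route to control the block masses of the minimizer: it compares $\nu^*$ not to a fixed thin-strip candidate but to its own rescaled copy $\tilde{\nu}^*$ pushed entirely into $D_{11}$ (Lemma~\ref{lem:inv_symm}), which directly yields $\nu^*(D_{12}\cup D_{21})\le \log 4/[\log(1+\ell)-\log(1-\ell)]$ with no need to bound the infimum from above.

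The genuine gap is in your final step, upgrading $\nu^*\ne T_*\nu^*$ to $\mathcal{O}(\nu^*)\ne\mathcal{O}(T_*\nu^*)$. You assert that ``a direct rank-normalization shows $\mathcal{O}(\nu^*)$ is supported on the anti-diagonals of $[0,p]^2\cup[p,1]^2$'', but this conflates the candidate $\nu^\epsilon$ with the actual minimizer $\nu^*$. You have not shown that $\nu^*$ is supported on anti-diagonals, only that its off-diagonal quadrant mass $q^*$ is small and that $p:=\nu^*(D_{11})\ne\tfrac12$. For $\ell<1$ one has $q^*>0$ in general, so $\mathcal{O}(\nu^*)$ need not be supported in $[0,p]^2\cup[p,1]^2$, and distinguishing $\mathcal{O}(\nu^*)$ from $\mathcal{O}(T_*\nu^*)$ requires a quantitative argument. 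This is exactly what the paper supplies via Lemma~\ref{lem:dmat}(ii): it shows that the purely numerical condition $\nu(D_{11})-\nu(D_{22})>8\,\nu(D_{12}\cup D_{21})$ on the block masses already forces $\mathcal{O}(\nu)\ne\mathcal{O}(T_*\nu)$, by estimating how much $\mathcal{O}$-mass can leak across the line $x=\nu(D_{11})$. Your estimates on $p$ and $q^*$ are in fact strong enough to feed into such a lemma (data processing applied to $\nu^*$ itself, not just to $T$-invariant measures, bounds $q^*$), but the lemma, or an equivalent permuton-distinguishing device, is missing from your sketch.
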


A schematic picture for the phase diagram explained in Theorem \ref{ppn:counter} is given on the right-hand side of Figure \ref{fig:phase_trans}. 

\begin{figure}[ht]
	\begin{center}
		\includegraphics[width=6.0in]{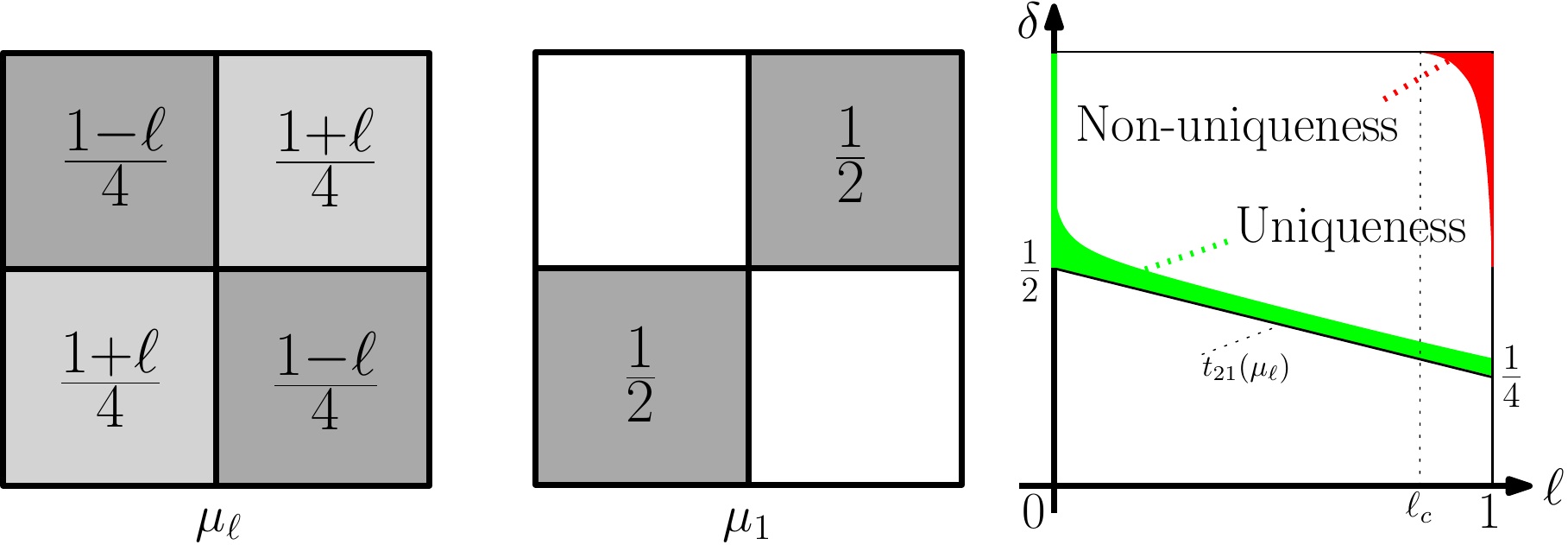}
		\caption{\textbf{Left:} The support of $\mu_\ell$ defined in Definition \ref{def:counter2}. Note that $\mu_0$ is equal to the Lebesgue measure on the unit square.  \textbf{Middle:} The support of $\mu_1$ defined in Definition \ref{def:counter2}.  \textbf{Right:} Schematic picture for the phase diagram explained in Theorem \ref{ppn:counter}.}
		\label{fig:phase_trans}
	\end{center}
\end{figure}

\subsubsection{\cc\ and \cnc\ measures with respect to inversions}

We now move to the problem of giving sufficient conditions for a measure $\mu$ to be \cnc\ with respect to inversions. We need to introduce the following definition.

\begin{definition}
	The (closed) \emph{support} of a probability measure $\mu$ on $[0,1]^2$, denoted by $\supp(\mu)$, is the intersection of all closed sets of $\mu$-measure $1$ (and hence it is closed). Equivalently, $\supp(\mu)$ is the set of all points $x\in[0,1]^2$ such that every open neighborhood of $x$ has positive measure.
\end{definition}

\begin{proposition}\label{prop:CNC21}
	Let $\mu\in\cM$ be a \cc\ measure with respect to $\sigma=21$. Then the following conclusions hold:
	\begin{enumerate}[(i)]
		\item The support $\supp(\mu)$ has empty interior.
	
	\bigskip
	
	\noindent We now further assume that $\mu$ is a permuton. Then we also have the following:
	
	\bigskip
	
		\item There exists a unique $b\in[0,1]$ depending on $\mu$ such that
		\begin{align*}
			\{(x,y)\in \supp(\mu) \mid \mu([0,x]\times[0,y])=0\} \subseteq	\{(x,y) \mid x+y=b\}.
		\end{align*}
		
		\item Consider $b$ from part $(ii)$. Let $\triangle=\{(x,y)\in [0,1]^2 \mid x\ge 0,y\ge 0,x+y<b\}$. Then $\mu(\triangle)=0$.
		
	\end{enumerate}
\end{proposition}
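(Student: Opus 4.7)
My plan is to translate the CC condition into a functional identity involving the joint and marginal CDFs of $\mu$, extend it pointwise to $\supp(\mu)$ by continuity, and then exploit this identity separately for each of the three parts. Write $F(x,y) := \mu([0,x]\times[0,y])$ and let $F_1,F_2$ be the marginals of $\mu$. For $(X_1,Y_1),(X_2,Y_2)\stackrel{i.i.d.}{\sim}\mu$, and using that continuous marginals preclude ties, the conditional probability of inducing the pattern $21$ given $(X_1,Y_1)=(x,y)$ equals
\[
g(x,y) := \Pr(X_2<x,\,Y_2>y)+\Pr(X_2>x,\,Y_2<y) = F_1(x)+F_2(y)-2F(x,y).
\]
The CC assumption asserts $g\equiv c$ holds $\mu$-a.s.\ for some constant $c\in[0,1]$. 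Continuity of $F_1,F_2$ makes $F$ jointly continuous, so $\{g=c\}$ is closed and has full $\mu$-measure, hence contains $\supp(\mu)$; thus $g\equiv c$ holds \emph{pointwise} on $\supp(\mu)$.

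For part (i), I argue by contradiction. Suppose $U\subseteq\supp(\mu)$ is nonempty and open. For any axis-parallel rectangle $R=[x_1,x_2]\times[y_1,y_2]\subseteq U$, evaluating $g$ at the four corners with signs $(+,-,-,+)$ makes the $F_1,F_2$ contributions cancel, leaving
\[
0 = g(x_1,y_1)-g(x_1,y_2)-g(x_2,y_1)+g(x_2,y_2) = -2\,\mu(R).
\]
Thus $\mu(R)=0$ for every such $R$. Any $p\in U$ sits interior to some axis-parallel rectangle contained in $U$, so $p$ admits an open neighborhood of $\mu$-measure zero, contradicting $p\in\supp(\mu)$.

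For parts (ii) and (iii), now assume $\mu$ is a permuton, so $F_1(x)=x$, $F_2(y)=y$, and the pointwise identity becomes $x+y-2F(x,y)=c$ on $\supp(\mu)$. Set $b:=c$. Any $(x,y)\in\supp(\mu)$ with $F(x,y)=0$ then satisfies $x+y=b$, yielding the inclusion in (ii). For uniqueness of $b$, compactness of $\supp(\mu)$ produces a minimizer $(x_*,y_*)$ of $(x,y)\mapsto x+y$ over $\supp(\mu)$; any point of $\supp(\mu)\cap([0,x_*]\times[0,y_*])$ must equal $(x_*,y_*)$ by minimality, and since $\mu$ has no atoms, $F(x_*,y_*)=0$. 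Hence the set in (ii) is nonempty and $b$ is pinned down as $x_*+y_*$. For (iii), rearranging the identity gives $x+y = b + 2F(x,y) \ge b$ on $\supp(\mu)$, so $\supp(\mu)\cap \triangle = \emptyset$, and therefore $\mu(\triangle) = 0$.

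The main technical subtlety is the upgrade from a $\mu$-a.s.\ identity to a pointwise identity on all of $\supp(\mu)$: this is what makes both the rectangle-cancellation argument in (i) and the geometric conclusions in (ii)-(iii) work, and it hinges entirely on the joint continuity of $F$ granted by the continuous-marginals hypothesis built into $\mathcal{M}$.
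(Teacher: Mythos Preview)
Your proof is correct and rests on the same key observation as the paper's, namely that the CC condition forces the continuous function $g(x,y)=F_1(x)+F_2(y)-2F(x,y)$ (equivalently, the paper's $\mu([0,x]\times[0,y])+\mu([x,1]\times[y,1])$) to be constant on all of $\supp(\mu)$. Where your execution differs is in how this identity is cashed in. For part~(i), the paper subdivides $[0,1]^2$ into nine rectangles and applies the identity at three pairs of corners to deduce that the central block has zero mass; your four-corner alternating-sum computation is the same inclusion--exclusion in compressed form. For part~(ii), you go a step further than the paper by explicitly exhibiting a point in the zero-set (the minimizer of $x+y$ on the compact support, which has $F=0$ by minimality and atomlessness), which cleanly pins down $b$; the paper only argues that any two such points determine the same $b$. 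For part~(iii), the paper argues by contradiction via a somewhat delicate limiting argument with $\tilde b=\inf\{x+y:F(x,y)>0\}$, whereas you observe directly that the identity $x+y=b+2F(x,y)\ge b$ on $\supp(\mu)$ forces $\supp(\mu)\cap\triangle=\emptyset$; this is shorter and avoids the subsequence extraction entirely.
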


\begin{remark}
	Part (b) of Proposition \ref{prop:CNC21} says that if $(x,y)\in\supp(\mu)$ and $\mu([0,x]\times[0,y])=0$, then there is a line of slope $-1$ passing through $(x,y)$ so that every point in the support of $\mu$ is on the right of this line (see also Figure \ref{fig:Support_permutons_CC}). 
	The obvious symmetric property holds for the other three corners $(0,1), (1,0), (1,1)$ of the unit square, with a possibly different value of the parameter $b$.
\end{remark}

We refer the reader to Section \ref{sect:examples} for more examples of \cc\ permutons, for $\sigma=21$.

\begin{figure}[ht]
	\begin{center}
		\includegraphics[scale=0.6]{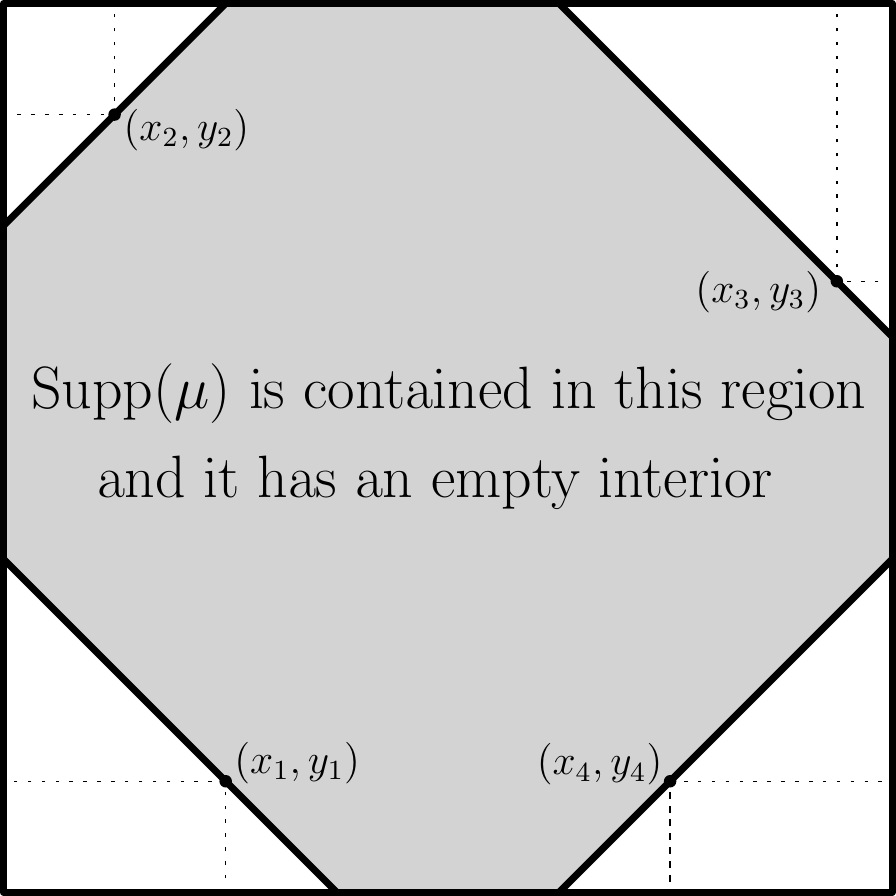}
		\caption{Schematic picture of a \cc\ permuton $\mu$ with respect to $\sigma=21$. 
			Proposition \ref{prop:CNC21} part $(ii)$ guarantees that the support of the permuton $\mu$ is contained in the gray region. We highlight that the diagonal boundaries of the gray region might collapse to the vertices  of the unit square $[0,1]^2$ (this situation correspond to the case when $b=0$ in the statement of Proposition \ref{prop:CNC21} part $(ii)$). For instance, the latter situation is realized when $\mu$ is supported either on the main diagonal or on the anti-diagonal.}
		\label{fig:Support_permutons_CC}
	\end{center}
\end{figure}

\subsection{Open problems}\label{openprob}

In this final section of the introduction we collect a list of open questions and problems that we think might be interesting to be addressed in future research projects.

\begin{itemize}
	\item For \cnc\ measures $\mu$, there is always a uniqueness phase (as shown in Theorem \ref{thm:constrain}), but there may or may not be a non-uniqueness phase, as demonstrated by parts $(ii)$, $(iii)$, $(iv)$ of Theorem \ref{ppn:counter}. We conjecture that the critical value $\ell_c$ of part $(iii)$ of Theorem \ref{ppn:counter} equals $0$, i.e., for any $\ell>0$ the measure $\mu_\ell$ has a uniqueness phase and a non-uniqueness phase, and so the nonexistence of the non-uniqueness phase for $\ell=0$ is somewhat special.
	
	\item It would be interesting to develop some more general tools to investigate problems such as the one in Theorem \ref{ppn:counter}. For instance, it would be interesting to find sufficient conditions to guarantee that the phase transition (if it exists) is unique. We point out that the main difficulty is that our optimization problems do not have linear constraints, but quadratic ones (for inversions) or of higher order (for general patterns).
	
	\item A challenging question is whether we can characterize/provide sufficient conditions for a permuton/measure $\mu$ to be \cc. This seems to be a complicated task even for the simplest pattern $\sigma=21$.  To the best of our knowledge, \cc\ permutons/measures for general patterns are hard to construct directly.
	
	\item As we have seen in the present paper, permutons are very useful to study various models of random permutations when one is interested in global properties of permutations, such as the proportion of patterns. On the other hand, permutons do not capture any of the local properties of the permutation (as shown in \cite[Theorem 4.1]{borga2020feasible} and \cite{bevan2022independence}). The first author of the present paper developed a theory of local convergence for permutations in \cite{borga2020local} in order to fill this gap. We believe that it would be interesting to study an LDP for permutations with respect to this new local topology in order to study some local properties such as the proportion of consecutive patterns (like ascents and descents). It might also be interesting to investigate some type of semi-local topologies such as the ones developed in \cite{bevan2022independence}.
	
	\item In Theorem \ref{thm:gibbs4} part $(ii)$, we proved that starting from an optimizer $\nu_{\theta}:=\nu_{21,\lambda,\theta}$ of the optimization problem in the right-hand side of \eqref{eq:opt} and considering the corresponding sequence $\pi_{n,\nu_\theta}$ conditioned on $\{t_{\sigma}(\pi_{n,\nu_\theta})\le \frac1{2}\}$, then this sequence does not converge to Lebesgue measure. We do expect that this results holds for any pattern $\sigma\in S_k$ (with the condition replaced by $\{t_{\sigma}(\pi_{n,\nu_{\sigma,\lambda,\theta}})\le \frac1{k!}\}$). The main obstacle to proving this conjecture for general patterns is that using the same strategy as in the proof of Theorem \ref{thm:gibbs4} part $(ii)$, one obtains some partial differential equations that for general patterns are more involved than the ones obtained for inversions.
\end{itemize}

\bigskip

\noindent\textbf{Organization of the paper.} The rest of the paper is organized as follows. In Section \ref{sec2} we prove our main results related to the general LDP presented in Section \ref{sec1.2} and then in Section \ref{sec3} we give the proofs of the applications to Gibbs random permutations introduced in Section \ref{sec1.3}.
Results related to constrained $\mu$-random permutations are proven in Section \ref{sec4}. In Section \ref{sec5}, we prove our results for inversions and provide several examples of \cc \ permutons with respect to inversions.
Proof of our main results involves applications of the general theory of large deviations, which is discussed in Appendix \ref{appA}. Finally Appendix \ref{appB} includes the proof of a technical lemma used in the paper.

\bigskip

\noindent\textbf{Acknowledgements.} SD was partially supported by NSF grant DMS-1928930 and Fernholz Foundation’s ``Summer Minerva Fellows'' program. SM was partially supported by NSF grants DMS-1712037 and DMS-2113414. The authors would like to thank Amir Dembo for helpful discussions. We thank two anonymous referees for some helpful comments. We thank Anirban Chatterjee for pointing out a mistake in the previous version of the paper.

\section{Proof of general large deviation principle} \label{sec2}

In this section we prove our main results about the general LDP from Section \ref{sec1.2}, i.e.\ Theorem \ref{thm:main}. We first derive a necessary lemma that establishes continuity of $t_{\sigma}(\cdot)$ under weak topology of measures.

\begin{lemma}\label{lem:rate}
	Suppose $(\nu_r)_{r\ge 1}$ is a sequence of measures in $\mathcal{P}$, such that $\nu_r$ converges weakly to $\nu_\infty\in \mathcal{M}$. Then for any pattern $\sigma\in S_k$, we have $t_\sigma(\nu_r)\to t_\sigma(\nu_\infty)$. 
\end{lemma}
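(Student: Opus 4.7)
The plan is to reduce the claim to a standard application of the Portmanteau theorem for weak convergence. The function $h_\sigma: ([0,1]^2)^k \to \{0,1\}$ defined in \eqref{def:hs1} is bounded, so the only potential obstruction to passing the limit under the integral is that $h_\sigma$ is not continuous: it is an indicator built from strict inequalities among the $x$-coordinates and among the $y$-coordinates of $k$ points.

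First, I would identify the discontinuity set of $h_\sigma$. Inspecting \eqref{def:hs1}, the function $h_\sigma$ is locally constant on the open set
\[
U:=\big\{(z_1,\ldots,z_k)\in ([0,1]^2)^k \,:\, x_i\ne x_j \text{ and } y_i\ne y_j \text{ for all } i\ne j\big\},
\]
where $z_i=(x_i,y_i)$. Hence the set $D$ of discontinuities of $h_\sigma$ is contained in the complement of $U$, i.e.\ in the union of the hyperplanes $\{x_i=x_j\}\cup \{y_i=y_j\}$ for $i\ne j$. Since $\nu_\infty\in \mathcal{M}$ has continuous marginals, for any $i\ne j$ the $\nu_\infty^{\otimes k}$-probability that $x_i=x_j$ equals $\int F_1(\{x\})\,dF_1(x)=0$, where $F_1$ is the (continuous) first marginal CDF of $\nu_\infty$, and similarly for $y_i=y_j$. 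A union bound gives $\nu_\infty^{\otimes k}(D)=0$.

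Next, I would use that weak convergence is preserved under products: if $\nu_r\Rightarrow \nu_\infty$ on $[0,1]^2$, then $\nu_r^{\otimes k}\Rightarrow \nu_\infty^{\otimes k}$ on $([0,1]^2)^k$ (a standard consequence of Fubini together with the fact that the class of continuous bounded product test functions is convergence-determining on the compact space $([0,1]^2)^k$). Then by the Portmanteau theorem in the form for bounded measurable functions whose discontinuity set is $\nu_\infty^{\otimes k}$-null (see e.g.\ Billingsley, Theorem 2.7), we conclude
\[
t_\sigma(\nu_r)=\int h_\sigma\, d\nu_r^{\otimes k}\;\longrightarrow\;\int h_\sigma\, d\nu_\infty^{\otimes k}=t_\sigma(\nu_\infty).
\]

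The only mildly delicate point is checking that $D$ is a $\nu_\infty^{\otimes k}$-null set, which crucially relies on the hypothesis $\nu_\infty\in\mathcal{M}$ (continuous marginals); without it the claim fails, as one can see by taking $\nu_r$ to be atomic approximations of a Dirac mass on the diagonal. Apart from that, everything is a routine invocation of the Portmanteau theorem, so I do not anticipate a serious obstacle.
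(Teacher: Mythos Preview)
Your proposal is correct and follows essentially the same approach as the paper: pass to the product measures $\nu_r^{\otimes k}\Rightarrow\nu_\infty^{\otimes k}$, observe that the discontinuity set of $h_\sigma$ lies in $\bigcup_{i\ne j}(\{x_i=x_j\}\cup\{y_i=y_j\})$, note that this set is $\nu_\infty^{\otimes k}$-null because $\nu_\infty\in\mathcal{M}$, and invoke Portmanteau. The paper's proof is more terse, but the argument is the same.
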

\begin{proof}
	
	Since $\nu_r\stackrel{w}{\to}\nu_\infty$, we have $\nu_r^{\otimes k}\stackrel{w}{\to}\nu_\infty^{\otimes k}$. 
	Using Definition \ref{def:ts} we get
	$$t_\sigma(\nu_r)=\nu_r^{\otimes k}(h_\sigma)\to \nu_\infty^{\otimes k}(h_\sigma)=t_\sigma(\nu_\infty).$$
	In the above display, the convergence uses the fact that the function $h_\sigma$ is discontinuous on the set $$\Gamma:=\bigcup_{1\le a<b\le k}^k\Big[ \{x_a=x_b\}\cup \{y_a=y_b\}\Big]$$ and $\nu_\infty^{\otimes k}(\Gamma)=0$, as $\nu_\infty\in \mathcal{M}$ has continuous marginals.
\end{proof}

We first prove Theorem \ref{thm:main} and then show how Corollary \ref{cor:ldp_unif} can be deduced from it.

\begin{proof}[Proof of Theorem \ref{thm:main}]
	Let $\mu\in\mathcal M$. With $({\bf X,Y}):=(X_i,Y_i)_{i=1}^n \stackrel{i.i.d.}{\sim} \mu$,  by Sanov's theorem, the random empirical measure 
	$$\Delta_n=\Delta_n({\bf X},{\bf Y}):=\frac{1}{n}\sum_{i=1}^n\delta_{X_i,Y_i}$$
	satisfies an LDP on $\mathcal{P}$ with the good rate function $ D(\cdot|\mu)$. 
	Equip $[0,1]^\N$ with the product topology, and recall the map $\PT:\mathcal{P}\mapsto [0,1]^{\N}$ defined by
	\begin{equation}\label{eq:gen_map}
		\PT(\gamma)=(t_{\sigma_i}(\gamma))_{i\ge 1},
	\end{equation} 
	where $(\sigma_i)_{i\ge 1}$ is an enumeration of patterns of all sizes. By Lemma \ref{lem:rate}, the function $\PT(\cdot)$ is continuous on the set $\mathcal{M}$. Also, $D(\cdot|\mu)=\infty$ on $\mathcal{P}\setminus\mathcal{M}$, as any distributions with non-continuous marginals must be singular with respect to $\mu$. Thus by the contraction principle (part $(ii)$ of Lemma \ref{lem:ldp} with $\mathcal X=\mathcal{P}$, $X_n=\Delta_n$, $T=\PT$ and $J(\cdot)=D(\cdot|\mu)$), the sequence $\PT(\Delta_n)$ satisfies an LDP in $[0,1]^\N$ with respect to product topology, with speed $n$ and good rate function $G_\mu(\cdot)$ defined by
	\begin{eqnarray*}
		G_\mu({\bf t}):= \begin{cases}
			\inf_{\nu\in \mathcal{P}: \PT(\nu)={\bf t}} D(\nu|\mu)&\text{ if }{\bf t}\in \PT(\mathcal{P}),\\
			\infty &\text{ if }{\bf t}\notin \PT(\mathcal{P})
		\end{cases}.
	\end{eqnarray*} 
	Since $\mu\in{\cM}$, $D(\nu|\mu)<\infty$ only if $\nu \in {\mathcal{M}}$. 
	Also $\PT(\mathcal{M})=\PT(\widetilde{\cM})$, and so the good rate function above simplifies to:
	\begin{eqnarray}\label{eq:grf}
		G_\mu({\bf t})= \begin{cases}
			\inf_{\nu\in \cM: \PT(\nu)={\bf t}} D(\nu|\mu)&\text{ if }{\bf t}\in \PT(\widetilde{\mathcal{M}}),\\
			\infty &\text{ if }{\bf t}\notin \PT(\widetilde{\mathcal{M}}).
		\end{cases}
	\end{eqnarray} 
	Now, the construction in Definition \ref{def:mu_random} ensures that $\PT(\Em(\pi_{n,\mu}))=\PT(\Delta_n)$, and so $\PT(\Em(\pi_{n,\mu}))$ satisfies an LDP with speed $n$, and good rate function $G_\mu(\cdot)$. Finally by \cite{Hoppen}, the map $\PT$ restricted to $\widetilde{\mathcal{M}}$ is 1-1, and $ \PT^{-1}$ is continuous from $\PT(\widetilde{\cM})$ to $\widetilde{\cM}$.
	It then follows by another application of the contraction principle (part $(ii)$ of Lemma \ref{lem:ldp} with $\mathcal X=\PT(\widetilde{\mathcal{M}})$, $X_n=\PT(\Em(\pi_{n,\mu}))$, $T=\PT^{-1}$ and $J(\cdot)=G_\mu(\cdot)$) that $\Em(\pi_{n,\mu})$ satisfies an LDP with speed $n$ and good rate function $J_\mu(\cdot)$, defined by
	\begin{eqnarray*}
		J_\mu(\gamma):= \begin{cases}
			\inf_{{\bf t}\in \PT(\widetilde{\mathcal{M}}): \PT(\gamma)={\bf t}} G_\mu({\bf t})&\text{ if }\gamma\in \widetilde{\mathcal{M}},\\
			\infty&\text{ if }\gamma\notin \til{\mathcal{M}}
		\end{cases}&.
	\end{eqnarray*}
	Recalling the expression for $G_\mu({\bf t})$ in \eqref{eq:grf} and noting that 
	$$\inf_{{\bf t}\in \PT(\widetilde{\mathcal{M}}): \PT(\gamma)={\bf t}} \inf_{\nu\in\cM: \PT(\nu)={\bf t}} D(\nu|\mu)=\inf_{\nu \in\cM: \PT(\gamma)=\PT(\nu)}D(\nu|\mu)=\inf_{\nu\in\cM: \mathcal{O}(\nu)=\gamma} D(\nu|\mu),$$
	we have $I_\mu=J_\mu$, and so the desired conclusion of the theorem follows.
\end{proof}

\begin{proof}[Proof of Corollary \ref{cor:ldp_unif}]
	To recover Corollary \ref{cor:ldp_unif} from Theorem \ref{thm:main}, it suffices to note if $\mu=\lambda$ is the Lebesgue measure on $[0,1]^2$, then for any $\gamma\in \widetilde{\cM}$ by \cite[Proposition 4.2]{starr2018phase} we have
	$\inf_{\nu\in \cM: \mathcal{O}(\nu)=\gamma} D(\nu|\lambda)=D(\gamma|\lambda).$ Consequently, $I_\lambda(\gamma)=I(\gamma)$. 
\end{proof}

We conclude this section with the proof of Proposition \ref{ppn:counter_new}.

\begin{proof}[Proof of Proposition \ref{ppn:counter_new}]
	
	Take $p\in (0,1)$ with $p\neq \frac12$. Define $$ \nu:=4p\cdot \lambda([0,\tfrac12]^2)+4(1-p)\cdot \lambda([\tfrac12,1]^2), \quad \gamma:=p^{-1}\cdot \lambda([0,p]^2)+(1-p)^{-1} \cdot \lambda ([p,1]^2).$$ We claim that $\mathcal{O}(\nu)=\gamma$. To see this, let $Z\sim \operatorname{Ber}(p)$. If $Z=1$, set $X,Y \stackrel{i.i.d.}{\sim} \operatorname{Unif}[0,\frac12]$, else $X,Y \stackrel{i.i.d.}{\sim} \operatorname{Unif}[\frac12,1]$. Clearly $(X,Y)\sim \nu$ and they have a common marginal c.d.f., say $F$. We have $(F(X),F(Y)|Z=1) \stackrel{i.i.d.}{\sim}  \operatorname{Unif}[0,p]$, and $(F(X),F(Y)|Z=0) \stackrel{i.i.d.}{\sim} \operatorname{Unif}[p,1]$. Thus, $(F(X),F(Y))\sim \gamma$. However, $D(\nu|\mu_1)<\infty$ and $D(\gamma|\mu_1)=\infty$.  
\end{proof}

\section{Proofs of the applications to Gibbs random permutations}\label{sec3}

We now turn towards proving the main results about Gibbs random permutations, namely, Theorem \ref{thm:gibbs_random} and Theorem \ref{thm:gibbs_random2}.

\begin{proof}[Proof of Theorem \ref{thm:gibbs_random}]
	
	$(i)$ Note that by \eqref{eq:gibbs_log_part} and recalling \eqref{eq:extend_map},
	\begin{equation*}
		\frac{F_n(\sigma,\mu,\theta)}{n}=\frac{1}{n}\log\E\exp\left(n\theta
		t_\sigma(\Em(\pi_{n,\mu})) \right).
	\end{equation*}
	Recall that $\Em(\pi_{n,\mu})$ satisfies an LDP by Theorem \ref{thm:main} with the good rate function $I_{\mu}(\cdot)$ introduced in \eqref{eq:rate}, and the map $\nu\mapsto t_\sigma(\nu)$ is continuous on $\mathcal{M}$ by Lemma \ref{lem:rate}. In particular, the map $t_\sigma(\cdot)$ is continuous where $I_{\mu}(\cdot)$ is finite. It follows by an application of Varadhan's Lemma (part $(iii)$ of Lemma \ref{lem:ldp} with $\mathcal{X}=\mathcal{M}$, $X_n=\Em(\pi_{n,\mu})$, $T=\theta t_\sigma$ and $J=I_{\mu}$) that
	\begin{multline}
		\lim_{n\to\infty}\frac{F_n(\sigma,\mu,\theta)}{n}=\sup_{\gamma\in \cM}\{\theta t_\sigma(\gamma)-I_\mu(\gamma)\}=\sup_{\gamma\in \widetilde{\cM}} \{\theta t_\sigma(\gamma)-\inf_{\nu\in \cM:\mathcal{O}(\nu)=\gamma} D(\nu|\mu)\}\\
		=\sup_{\gamma\in \widetilde{\cM}}\, \sup_{\nu\in \cM:\mathcal{O}(\nu)=\gamma}\{\theta t_\sigma(\nu)-D(\nu|\mu)\}
		=\sup_{\nu\in \cM}\{\theta t_\sigma(\nu)-D(\nu|\mu)\},
	\end{multline}
	where in the second equality we used that $I_{\mu}(\gamma)=\infty$ if $\gamma\notin \widetilde{\cM}$.
	
	\medskip
	
	$(ii)$	Since $\sup_{\nu\in \mathcal{M}}\{\theta t_\sigma(\nu)-D(\nu|\mu)\}$ is finite, there exists $\alpha<\infty$ such that it suffices to solve the optimization for $\nu$ such that $$\theta t_\sigma(\nu)-D(\nu|\mu)\le \alpha\Rightarrow D(\nu|\mu)\le |\theta|+\alpha.$$
	Since $D(.|\mu)$ is a good rate function, the set $\{\nu\in \mathcal{M}:D(\nu|\mu)\le |\theta|+\alpha\}$ is compact. Also, the function $\nu\mapsto \theta t_\sigma(\nu)-D(\nu|\mu)$ is continuous on this set by Lemma \ref{lem:rate}, and so the supremum in part $(i)$ is attained.

	For finding the optimizers, it is enough to look among $\nu$'s which are absolutely continuous with respect to $\mu$. Let us assume $\nu$ is one of the maximizers, and $\frac{\d\nu}{\d \mu}=g$. 
	Take another measure which is absolutely continuous with respect to $\mu$ and denote its Radon–Nikodym derivative by $\tilde{g}$. Define $g_c:=(1-c)g+c\tilde{g}=g+c(\tilde{g}-g)$, and note that $\nu_c:=g_cd\mu\in \mathcal{M}$ and that $\partial/\partial c (g_c)=\tilde{g}-g$. Setting
	\begin{align*}
		f(c):=\theta t_\sigma(\nu_c)-D(\nu_c|\mu)=	\theta \int_{[0,1]^{2k}} h_\sigma(z_1,\ldots,z_k)\prod_{a=1}^k g_c(z_a)\d\mu(z_a)-\int_{[0,1]^2} g_c(z)\log g_c(z)\d\mu(z)
	\end{align*}
	and using the fact that $\nu_0=gd\mu$ is an optimizer, it follows that $f'(0)\le 0$, which gives
	\begin{align}\label{eq:elmid}
		\int_{[0,1]^2} (\tilde{g}(z_1)-g(z_1))\left[k\theta \int_{[0,1]^{2k-2}} h_\sigma(z_1,\ldots,z_k)\prod_{a=2}^k g(z_a)\d\mu(z_a)-\log g(z_1)\right]\d\mu(z_1) \le 0.
	\end{align}
	Note that \eqref{eq:elmid} implies
	\begin{align*}
		\int_{[0,1]^2} \til{g}(z_1)\log g(z_1)\d\mu(z_1) \ge D(\nu|\mu)-2k\theta \implies \int_{[0,1]^2} \til{g}(z_1)\log [e^{2k\theta}g(z_1)]\d\mu(z_1)\ge 0.
	\end{align*}
	The above inequality holds for any $\tilde{g}\ge 0$ such that $\int_{[0,1]^2}\tilde{g}d\mu=1$, and consequently, 
	by Theorem 1.6.11 in \cite{ash} we have $g(z_1)\ge e^{-2k\theta}$ $\mu$-almost surely. For any $\phi\in [0,1]^2\to [-1,1]$ with $\int_{[0,1]^2}\phi(z)\mu(z)=0$, observe that $\til{g}(z):=e^{-2k\theta}\phi(z)+g(z) \ge 0$ $\mu$-almost surely and $\int_{[0,1]^2} \til{g}(z)\d\mu(z)=1$. Thus for this choice of $\til{g}$ in \eqref{eq:elmid} we have
	\begin{align*}
		\int_{[0,1]^2} \phi(z_1)\left[k\theta \int_{[0,1]^{2k-2}} h_\sigma(z_1,\ldots,z_k)\prod_{a=2}^k g(z_a)\d\mu(z_a)-\log g(z_1)\right]\d\mu(z_1) \le 0.
	\end{align*}
	The above inequality holds for any $\phi:[0,1]^2\mapsto [-1,1]$ such that $\int_{[0,1]^2}\phi(z)d\mu(z)=0$. Changing $\phi$ to $-\phi$, we conclude
	\begin{align*}
		\int_{[0,1]^2} \phi(z_1)\left[k\theta \int_{[0,1]^{2k-2}} h_\sigma(z_1,\ldots,z_k)\prod_{a=2}^k g(z_a)\d\mu(z_a)-\log g(z_1)\right]\d\mu(z_1) = 0.
	\end{align*}
	Since this must happen for all $\phi:[0,1]^2\mapsto [-1,1]$ such that $\int_{[0,1]^2}\phi(z)d\mu(z)=0$, we get that for $\mu$-almost every $z_1$
	\begin{align*}
		k\theta \int_{[0,1]^{2k-2}} h_\sigma(z_1,\ldots,z_k)\prod_{a=2}^k g(z_a)\d\mu(z_a)-\log g(z_1)=\mbox{constant}.
	\end{align*}
	The desired Euler-Lagrange equation follows from this equation and the fact that $\int_{[0,1]^2} g d\mu=1$.
	
	\medskip
	
	$(iii)$
	A direct calculation gives
	\begin{align}\label{eq:gibbs_lebesgue_gen}
		\notag\P(\pi_{n,\sigma,\mu,\theta}=\tau)=&e^{n\theta t_\sigma(\tau)-F_n(\sigma,\mu,\theta)} \int_{[0,1]^{2n}} \mathds{1}\{\pi_{{\bf x},{\bf y}}=\tau\}  \prod_{i=1}^n d\mu(x_i,y_i)\\
		= &e^{n\theta t_\sigma(\tau)-F_n(\sigma,\mu,\theta)} \P(\pi_{n,\mu}=\tau).
	\end{align}
	From Theorem \ref{thm:main}, $\Em(\pi_{n,\mu})$ satisfies an LDP with speed $n$ and good rate function $I_\mu(\gamma)$. Moreover, the function $\gamma\mapsto \theta t_\sigma(\gamma)$ is continuous on the set $\widetilde{\cM}$ by Lemma \ref{lem:rate}, and $I_\mu(\gamma)=\infty$ outside $\widetilde{\mathcal{M}}$. Therefore, from the expression in \eqref{eq:gibbs_lebesgue_gen}, we can invoke Lemma \ref{lem:ldp} part $(iv)$ (with $\mathcal{X}=\mathcal{P}$, $X_n=\Em(\pi_{n,\mu})$, $T(\cdot)=\theta t_\sigma(\cdot)$, $J(\cdot)=I_\mu(\cdot)$ and $Y_n=\Em(\pi_{n,\sigma,\mu,\theta})$ and deduce that $\Em(\pi_{n,\sigma,\mu,\theta})$ satisfies an LDP with speed $n$, and good rate function
	\begin{eqnarray*} 
		I_{\sigma,\mu,\theta}(\gamma)=\begin{cases}
			I_\mu(\gamma)-\theta t_\sigma(\gamma)-\inf_{\gamma \in \widetilde{\cM}}\{I_\mu(\gamma)-\theta t_\sigma(\gamma)\}&\text{ if }\gamma\in \widetilde{\mathcal{M}},\\
			\infty&\text{ if }\gamma\notin \widetilde{\mathcal{M}}
		\end{cases}.
	\end{eqnarray*}
	The desired form of the rate function follows on recalling the definition of $I_\mu(\cdot)$, from which we obtain that
	$$I_\mu(\gamma)-\theta t_\sigma(\gamma)= \inf_{\nu\in {\cM}: \mathcal{O}(\nu)=\gamma}\{D(\nu|\mu)-\theta t_\sigma(\nu)\}$$
	and noting that
	$$
	\inf_{\gamma\in \widetilde{\cM}}\{I_\mu(\gamma)-\theta t_\sigma(\gamma)\}=\inf_{\nu \in \cM}\{D(\nu|\mu)-\theta t_\sigma(\nu)\}.$$
	
	\medskip
	
	$(iv)$  Note that the term $\inf_{\nu \in \cM} \{D(\nu|\mu)-\theta t_\sigma(\nu)\}$ in the the expression of $I_{\sigma,\mu,\theta}(\gamma)$ is independent of $\gamma$. Therefore, using part $(iii)$ above, along with part $(i)$ of Lemma \ref{lem:ldp}, we have that $\pi_{n,\sigma,\mu,\theta}$ converges to the set of minimizers of the function
	$$\gamma\mapsto \inf_{\nu\in \cM:\mathcal{O}(\nu)=\gamma}\{D(\nu|\mu)-\theta t_\sigma(\nu)\},$$ which is just $\mathcal{O}(\mathcal{F}(\sigma,\mu,\theta))$.
\end{proof}

We now move to the proof of Theorem \ref{thm:gibbs_random2}, i.e., the one regarding Gibbs random permutations in the so-called high-temperature phase. In the next proof, given a probability measure $\mu\in\mathcal P$, $\|.\|_1$ denotes the $L^1(\mu)$ norm of a function and $\|.\|_\infty$ the sup norm. We also denote by $TV$ the total variation distance.

\begin{proof}[Proof of Theorem \ref{thm:gibbs_random2}]
	
	In the first four parts, the dependence of $\nu_{\sigma,\mu,\theta}$ on $(\sigma,\mu)$  is not important, so for simplicity of notation, we will denote $\nu_{\sigma,\mu,\theta}$ simply by $\nu_\theta$. 
	
	$(i)$ Set $\mathcal{L}_1(\mu)=\left\{g:\int gd\mu=1,g\geq 0 \right\}$. For all $\theta\in\R$ define a map $T_{\theta}:\mathcal{L}_1(\mu)\mapsto \mathcal{L}_1(\mu)$ by setting
	\begin{align*}
		T_{\theta}(u(z_1)):=\frac{\exp\Big(k\theta \int_{[0,1]^{2k-2}} h_\sigma(z_1,\ldots,z_k)\prod_{a=2}^k u(z_a)\d \mu(z_a)\Big)}{\int_{[0,1]^2} \exp\Big(k\theta \int_{[0,1]^{2k-2}} h_\sigma(z_1,\ldots,z_k)\prod_{a=2}^k u(z_a)\d \mu(z_a)\Big)\d \mu(z_1)}.
	\end{align*}
	We claim that if $\theta_c:=\min\left\{C^{-1},1\right\}$ where $C:=\sup_{|x|\le 4k^2} \Big|\frac{e^x-1}{x}\Big|$, then for all $\theta\in (-\theta_c,\theta_c)$, the map $T_{\theta}$ is a contraction in $\mathcal{L}_1(\mu)$. Note that this is enough to show that for all $\theta\in (-\theta_c,\theta_c)$ there is a unique solution to the Euler-Lagrange equation of part $(ii)$ of Theorem \ref{thm:gibbs_random}. We prove our claim. Take $u,v \in \mathcal{L}_1(\mu)$ and note that
	\begin{align*}
		\int_{[0,1]^{2k-2}}\left|\prod_{i=2}^k u(z_i)-\prod_{i=2}^k v(z_i)\right|\prod_{a=2}^k\d\mu(z_a)\leq\sum_{i=2}^k\int_{[0,1]^{2}}|u(z_i)-v(z_i)|d\mu(z_i)\leq k \lVert u-v\rVert_{1}.
	\end{align*}
	where we used that $\prod_{i=2}^k u(z_i)-\prod_{i=2}^k v(z_i)=\sum_{i=2}^k u(z_2)\dots u(z_{i-1})(u(z_{i})-v(z_{i}))v(z_{i+1})\dots v(z_{k})$.
	As a consequence, since $||h_{\sigma}||_{\infty}\le 1$, for all $z_1\in [0,1]^2$ we have
	\begin{align*}
		-\theta k \lVert u-v\rVert_{1}\le \theta\int_{[0,1]^{2k-2}} h_\sigma(z_1,\ldots,z_k)\left[\prod_{i=2}^k u(z_i)-\prod_{i=2}^k v(z_i)\right]\prod_{a=2}^k\d\mu(z_a)
		\le \theta k \lVert u-v\rVert_{1}.
	\end{align*}
	This implies that 
	\begin{align*}
		e^{-\theta k^2 \lVert u-v\rVert_{1}} \le \frac{\exp\Big(k\theta \int_{[0,1]^{2k-2}} h_\sigma(z_1,\ldots,z_k)\prod_{a=2}^k u(z_a)\d \mu(z_a)\Big)}{\exp\Big(k\theta \int_{[0,1]^{2k-2}} h_\sigma(z_1,\ldots,z_k)\prod_{a=2}^k v(z_a)\d \mu(z_a)\Big)} \le e^{\theta k^2 \lVert u-v\rVert_{1}}.
	\end{align*}
	which in turn gives
	\begin{align*}
		e^{-\theta k^2 \lVert u-v\rVert_{1}} \le \frac{\int_{[0,1]^2} \exp\Big(k\theta \int_{[0,1]^{2k-2}} h_\sigma(z_1,\ldots,z_k)\prod_{a=2}^k u(z_a)\d \mu(z_i)\Big)\d\mu(z_1)}{\int_{[0,1]^2} \exp\Big(k\theta \int_{[0,1]^{2k-2}} h_\sigma(z_1,\ldots,z_k)\prod_{a=2}^k v(z_a)\d \mu(z_a)\Big)\d\mu(z_1)} \le e^{\theta k^2 \lVert u-v\rVert_{1}}.
	\end{align*}
	Taking ratios of the last two displayed equations, we get
	\begin{align*}
		e^{-2\theta k^2 \lVert u-v\rVert_{1}} \le \frac{T_\theta(u(z_1))}{T_\theta(v(z_1))} \le e^{2\theta k^2 \lVert u-v\rVert_{1}},
	\end{align*}
	and subtracting 1 from each term in the above display and then multiplying by $T_\theta(v(z_1))$ we get
	\begin{align*}
		T_\theta(v(z_1))\left(e^{-2\theta k^2 \lVert u-v\rVert_{1}}-1\right) \le T_\theta(u(z_1))-T_\theta(v(z_1)) \le T_\theta(v(z_1))\left(e^{2\theta k^2 \lVert u-v\rVert_{1}}-1\right),
	\end{align*}
	Recalling that $C=\sup_{|x|\le 4k^2} \Big|\frac{e^x-1}{x}\Big|$, and noting that $|\theta|\leq |\theta_c|\leq 1$ and that $\lVert u-v\rVert_{1}\leq 2$ (because $u,v \in \mathcal{L}_1(\mu)$), we get
	\begin{align*}
		\Big|T_\theta(u(z_1))-T_\theta(v(z_1))\Big| \le C |\theta|\cdot ||u-v||_{1}\cdot  T_\theta(v(z_1)).
	\end{align*}
	Since $T_\theta(v)\in \mathcal{L}_1(\mu)$, we conclude that $||T_\theta (u)-T_\theta (v)||_{1} \le C\lvert\theta\rvert \lVert u-v\rVert_{1}$. Thus for  $\theta\in (-\theta_c,\theta_c)$  the map $T_\theta$ is a contraction. 
	
	The fact that $\pi_{n,\sigma,\mu,\theta}$ converges in probability to $\mathcal{O}(\nu_{\theta})$ then simply follows from Theorem \ref{thm:gibbs_random} part $(iv)$.

	\medskip
	
	$(ii)$	To show continuity, we take any sequence $(\theta_r)_{r\ge 1}\in (-\theta_c,\theta_c)$, such that $\theta_r \to \theta\in (-\theta_c,\theta_c)$. Since $\nu_\theta$ is an optimizer of \eqref{eq:opt}, for any $\theta$ we have
	$$\theta t_\sigma(\nu_\theta)-D(\nu_\theta|\mu)\ge \theta t_\sigma(\mu)\Rightarrow D(\nu_\theta|\mu)\le \theta (t_\sigma(\nu_\theta)-t_\sigma(\mu)).$$
	This shows that $\limsup_{r\to\infty}D(\nu_{\theta_r}|\mu)<\infty$. Also note that $\nu_{\theta_r}\in \mathcal{M}\subset\mathcal{P}$ is a tight sequence, and so there exists  $\nu_\infty\in \mathcal{P}$ which is a weak limit point of this sequence. By 
	lower semi-continuity of $D(\cdot|\mu)$ we have $D(\nu_\infty|\mu)<\infty$, and so $\nu_\infty\in \mathcal{M}$. 
	Continuity of $F(\sigma,\mu,\cdot)$ (because it is the limit of convex functions), lower semi-continuity of $D(\cdot|\mu)$, and continuity of $t_\sigma(\cdot)$ at $\nu_\infty\in \mathcal{M}$ (by Lemma \ref{lem:rate}), then gives
	\begin{align*}
		F(\sigma,\mu,\theta) & = \lim_{r\to\infty} F(\sigma,\mu,\theta_r) = \lim_{r\to\infty} \theta_r t_\sigma(\nu_{\theta_r})-D(\nu_{\theta_r}|\mu) \le \theta t_\sigma(\nu_\infty)-D(\nu_{\theta_\infty}|\mu),
	\end{align*}
	where in the second equality we used that $\nu_{\theta_r}$ is the unique optimizer of $F(\sigma,\mu,\theta_r)$ by part $(i)$. 
	Since $\nu_\theta$ is the unique maximizer of $F(\sigma,\mu,\theta)$, we must have $\nu_\infty=\nu_{\theta}$. This gives \[t_\sigma(\nu_\theta)=t_\sigma(\nu_\infty)=\lim_{r\rightarrow\infty}t_\sigma(\nu_{\theta_r})\] and so $\theta\mapsto t_\sigma(\nu_\theta)$ is continuous. 
	
	Proceeding to show monotonicity, using parts $(i)$ of Theorem \ref{thm:gibbs_random} and Theorem \ref{thm:gibbs_random2}, we have  $$\frac{F_n(\sigma,\mu,\theta)}{n}\to F(\sigma,\mu,\theta)=\theta t_{\sigma}(\nu_\theta)-D(\nu_\theta|\mu).$$
	Also, if $({\bf X},{\bf Y})\sim Q_{n,\sigma,\mu,\theta}$ , then it follows from part $(i)$ above, and parts $(i)$ and $(iv)$ of Lemma \ref{lem:ldp} that $\Em(\pi_{{\bf X},{\bf Y}})\stackrel{w}{\to}\nu_\theta$ in probability, and so since $t_\sigma(\cdot)$  is bounded and continuous at $\nu_\infty\in \mathcal{M}$ (by Lemma \ref{lem:rate}) we get
	$$\frac{1}{n} F_n'(\sigma,\mu,\theta)\stackrel{\eqref{eq:gibbs_log_part}}{=}\E [t_\sigma(\pi_{n,\sigma,\mu,\theta})]=\E [t_\sigma(\Em(\pi_{{\bf X},{\bf Y}}))]\to t_\sigma(\nu_\theta).$$
	where in the second equality we used the definitions in \eqref{eq:Gibss_def_perm} and \eqref{eq:extend_map}. An application of dominated convergence theorem yields
	\begin{align*}
		F(\sigma,\mu,\theta+h)-F(\sigma,\mu,\theta)  =\lim_{n\to\infty} \frac1n[F_n(\sigma,\mu,\theta+h)-F_n(\sigma,\mu,\theta)] & =\lim_{n\to\infty}\int_{\theta}^{\theta+h} \frac1nF_n'(\sigma,\mu,\alpha)\d \alpha \\ & = \int_{\theta}^{\theta+h}t_\sigma(\nu_{\alpha})\d\alpha.
	\end{align*}
	This implies $F(\sigma,\mu,\theta)$ is differentiable with $F'(\sigma,\mu,\theta)=t_\sigma(\nu_\theta)$. The fact that $t_\sigma(\nu_\theta)$ is non-decreasing follows from the observation that $F_n(\sigma,\mu,\cdot)$ is convex.
	
	\medskip
	
	$(iii)$  Assume $\theta_1,\theta_2 \in (-\theta_c,\theta_c)$ are such that $t_{\sigma}(\nu_{\theta_1})=t_{\sigma}(\nu_{\theta_2})$. Using the fact that $\nu_{\theta_1}$ is the optimizer of $\sup_{\nu\in\cM} \{\theta_1 t_\sigma(\nu)-D(\nu|\mu)\}$, we have 
	\begin{align*}
		\theta_1t_{\sigma}(\nu_{\theta_1})-D(\nu_{\theta_1}|\mu) \ge \theta_1t_{\sigma}(\nu_{\theta_2})-D(\nu_{\theta_2}|\mu)
	\end{align*}
	which implies $D(\nu_{\theta_2}|\mu) \ge D(\nu_{\theta_1}|\mu)$. By symmetry $D(\nu_{\theta_2}|\mu)=D(\nu_{\theta_1}|\mu)$ and so $ \theta_1t_\sigma(\nu_{\theta_1})-D(\nu_{\theta_1}|\mu)= \theta_1t_\sigma(\nu_{\theta_2})-D(\nu_{\theta_2}|\mu)$. Since the optimizing measure is unique (by part $(i)$), this forces $\nu_{\theta_1}=\nu_{\theta_2}$.
	
	\medskip
	
	$(iv)$ Let $(\theta_m)_{m\ge1}$ be a sequence converging to $\theta_\infty\in (-\theta_c,\theta_c)$. We will show that $\nu_{\theta_m}\stackrel{TV}{\rightarrow}\nu_{\theta_\infty}$. To this effect, set $f_m(\cdot):=\frac{d\nu_{\theta_m}}{d\mu}$ for $1\le m\le \infty$, and use part $(ii)$ of Theorem \ref{thm:gibbs_random} to recall that
	\begin{align}\label{eq:el}
		f_{m}(z_1)=\frac{\exp\Big(k\theta_m \int_{[0,1]^{2k-2}}h_\sigma(z_1,\ldots,z_k) \prod_{a=2}^k f_m(z_a) d\mu(z_a)\Big)}{\int_{[0,1]^2} \exp\Big(k\theta_m \int_{[0,1]^{2k-2}}h_\sigma(z_1,\ldots,z_k) \prod_{a=2}^k f_m(z_a) d\mu(z_a)\Big)}.
	\end{align}
	A direct computation gives
	\begin{align*}
		& \left|\theta_m \int_{[0,1]^{2k-2}} h_\sigma(z_1,\ldots,z_k)\prod_{a=2}^k f_m(z_a)d\mu(z_a)-\theta_\infty \int_{[0,1]^{2k-2}} h_\sigma(z_1,\ldots,z_k)\prod_{a=2}^k f_\infty(z_a)d\mu(z_a)\right| \\ & 
		\leq\left|(\theta_m-\theta_\infty) \int_{[0,1]^{2k-2}} h_\sigma(z_1,\ldots,z_k)\prod_{a=2}^k f_m(z_a)d\mu(z_a)\right|\\
		&+\left|\theta_\infty \int_{[0,1]^{2k-2}} h_\sigma(z_1,\ldots,z_k)\prod_{a=2}^k (f_m(z_a)-f_\infty(z_a))d\mu(z_a)\right| \\ & 
		\le |\theta_m-\theta_\infty|+|\theta_\infty k |. \|f_m-f_\infty\|_{1}.
	\end{align*}	
	Using \eqref{eq:el} and taking ratios gives (as in the proof of part $(i)$)
	\begin{align*}
		e^{-2k|\theta_m-\theta_\infty|-2k^2|\theta_\infty|\cdot \|f_m-f_\infty\|_{1}} \le \frac{f_m(z)}{f_\infty(z)} \le e^{2k|\theta_m-\theta_\infty|+2k^2|\theta_\infty|\cdot\|f_m-f_\infty\|_{1}}.
	\end{align*}
	Subtracting one from each term in the above display and then multiplying by $f_\infty(z)$ we get
	\begin{align*}
		f_\infty(z)(e^{-2k|\theta_m-\theta_\infty|-2k^2|\theta_\infty| \cdot \|f_m-f_\infty\|_{1}}-1) \le f_m(z)-f_\infty(z) \le (e^{2k|\theta_m-\theta_\infty|+2k^2|\theta_\infty|\cdot \|f_m-f_\infty\|_{1}}-1)f_\infty(z).
	\end{align*}	
	Recalling that $\max(|\theta_m|, |\theta_\infty|)\le \theta_c\leq 1$ and that $\|f_m-f_\infty\|_1\le 2$ and setting $C':=\sup_{|x|\le 4k+4k^2}\Big|\frac{e^x-1}{x}\Big|$, we then have
	\[-2C'kf_\infty(z)\Big(|\theta_m-\theta_\infty|+k|\theta_\infty|\cdot\|f_m-f_\infty\|_1\Big)\le f_m(z)-f_\infty(z)\le 2C'kf_\infty(z)\Big(|\theta_m-\theta_\infty|+k|\theta_\infty|\cdot\|f_m-f_\infty\|_1\Big).\]
	Let $\theta_\infty$ be such that $2k^2C'|\theta_\infty|<1$. Then the above display implies	$||f_m-f_{\infty}||_{1}\to 0$ as $\theta_m\to \theta_\infty$. This completes the proof. Note that the proof of continuity works if $|\theta|\le \theta_c':=\min\Big(1, \frac{1}{2k^2C'},\theta_c\Big)$, which again depends only on $k$. Thus without loss generality one can replace $\theta_c$ by $\theta_c'$ if needed.
	
	\medskip
	
	$(v)$ Let $(\mu_m)_{m\ge 1}$ be a sequence in $\cM$ and $\mu_{\infty}\in\cM$ such that 
	\begin{equation}\label{eq:tv_dist}
		b_m:=\operatorname{TV}(\mu_m,\mu_{\infty})\to 0.
	\end{equation}
	By part $(i)$, we know that there exists $\theta_c>0$ depending only on the size $k$ of $\sigma$ such that, for fixed $\theta\in(-\theta_c,\theta_c)$, all the optimization problems 
	\begin{align*}
		\sup_{\nu\in \cM} [\theta t_{\sigma}(\nu)-D(\nu|\mu_m)]\quad \text{and} \quad \sup_{\nu\in \cM} [\theta t_{\sigma}(\nu)-D(\nu|\mu_{\infty})]
	\end{align*}
	have a unique optimizer $\nu_m:=\nu_{\sigma,\mu_m,\theta}$ and $\nu_\infty:=\nu_{\sigma,\mu_{\infty},\theta}$. We show that $\nu_m\stackrel{w}{\to}\nu_\infty$.
	
	From the assumption in \eqref{eq:tv_dist}, there exists a coupling of $(\mu_m,\mu_{\infty})$ such that for $(Z^{(m)},Z^{(\infty)})\sim (\mu_m,\mu_{\infty})$ we have
	$$\Pr(Z^{(m)}\neq Z^{(\infty)})=b_m.$$	
	Let $({\bf Z^{(m)}},{\bf Z^{(\infty)}})=\big((Z^{(m)}_i,Z^{(\infty)}_i)\big)_{1\le i\le n}\stackrel{iid}{\sim}(Z^{(m)},Z^{(\infty)})$.
	Set 
	$$I_{n,m}:=\left|\{1\le i\le n:Z^{(m)}_i\ne Z^{(\infty)}_i\}\right|\sim \operatorname{Bin}(n, b_m),$$ 
	and use standard concentration bounds (such as Chernoff's inequality) to get that for every $\varepsilon>0$ we have
	\begin{align}\label{eq:ignore}
		\limsup_{m\to\infty}\limsup_{n\to\infty}\frac{1}{n}\log \Pr(I_{n,m}>n\varepsilon)=-\infty.
	\end{align}
	Also, recalling that $\pi_{n,\mu_m}=\pi_{{\bf Z^{(m)}}}$ and $\pi_{n,\mu_\infty}=\pi_{{\bf Z^{(\infty)}}}$, we have 
	$$I_{n,m}\le n \varepsilon\Rightarrow |t_{\sigma}(\pi_{n,\mu_m})-t_{\sigma}(\pi_{n,\mu_\infty})|\le k\varepsilon.$$ 
	This gives	
	\begin{equation*}
		\Ex \exp\Big(n\theta t_{\sigma}(\pi_{n,\mu_m})\Big)  \le e^{n\theta k \varepsilon}\Ex  \exp\Big(n\theta t_{\sigma}(\pi_{n,\mu_\infty})\Big) 
		+  e^{n\theta}\Pr(I_{n,m}>n\varepsilon).
	\end{equation*}
	Thus thanks to  \eqref{eq:ignore}, taking $\log$, dividing by $n$, and letting $n\to\infty$ followed by $m\to\infty$ gives 
	\begin{align*}
		\limsup_{m\to\infty} \lim_{n\to\infty}\frac1n\log \Ex \exp\left(n\theta t_{\sigma}(\pi_{n,\mu_m})\right)\le  \lim_{n\to\infty}\frac1n\log \Ex \exp\left(n\theta t_{\sigma}(\pi_{n,\mu_\infty})\right)+\theta k\varepsilon.
	\end{align*}
	Reversing the roles of $({\bf Z^{(m)}},{\bf Z^{(\infty)}})$, and noting that $\varepsilon>0$ is arbitrary, we get
	\begin{align*}
		\limsup_{m\to\infty} \lim_{n\to\infty}\frac1n\log \Ex \exp\left(n\theta t_{\sigma}(\pi_{n,\mu_m})\right)
		=
		\lim_{n\to\infty}\frac1n\log \Ex \exp\left(n\theta t_{\sigma}(\pi_{n,\mu_\infty})\right).
	\end{align*}
	Recalling the definition in \eqref{eq:gibbs_log_part} and using part $(i)$ of Theorem \ref{thm:gibbs_random}, we get
	\begin{align}\label{eq:bound_above}
		\lim_{m\to \infty} \sup_{\nu\in \cM} [\theta t_{\sigma}(\nu)-D(\nu|\mu_m)]=\sup_{\nu\in \cM} [\theta t_{\sigma}(\nu)-D(\nu|\mu_{\infty})].
	\end{align}
	Recall that $\nu_\infty$ is the unique optimizer of the right-hand side of the above equation, and $(\nu_m)_{m\ge 1}$ is the sequence of unique optimizers of the left-hand side of the above equation. Let $\nu_{\infty}^*$ be a subsequential weak limit of $(\nu_m)_{m\ge 1}$.  Abusing notation slightly, we pass to this subsequence and still denote it by $(\nu_m)_{m\ge 1}$. Since $\nu_m$ is an optimizer, we have
	\[\theta t_\sigma(\nu_m)-D(\nu_m|\mu_m)\ge \theta t_\sigma(\mu_m)\Rightarrow D(\nu_m|\mu_m)\le \theta t_\sigma(\nu_m)-\theta t_\sigma(\mu_m)\le |\theta|.\]
	Lower semi-continuity then gives $D(\nu^*_{\infty}|\mu_{\infty})\le |\theta|$, which implies $\nu^*_{\infty}\in \cM$, as $\mu_\infty\in \cM$. Thus by Lemma \ref{lem:rate} we have  $\lim_{m\to\infty} t_\sigma(\nu_m)= t_{\sigma}(\nu^*_{\infty})$. By using \eqref{eq:bound_above}, we have
	\begin{align*}
		\theta t_{\sigma}(\nu_\infty)-D(\nu_\infty|\mu_{\infty}) = \lim_{m\to\infty} \big(\theta t_{\sigma}(\nu_m)-D(\nu_m|\mu_m)\big) \le \theta t_{\sigma}(\nu^*_{\infty})-D(\nu^*_{\infty}|\mu_{\infty}),
	\end{align*}
	where the inequality uses lower semi-continuity of $D(\cdot|\cdot)$.
	But by uniqueness for the optimizer we must have $\nu^*_{\infty}=\nu_\infty$. Since any subsequential limit is the same, we have shown that $\nu_m\stackrel{w}{\to}\nu_\infty$, as desired.
\end{proof}

\begin{proof}[Proof of Corollary \ref{cor:exp}] 
	$(i)$ To begin, use \eqref{eq:gibbs_lebesgue_gen} with $\mu=\lambda$ to note that
	\begin{align}\label{eq:gibbs_lebesgue2}
		\P(\pi_{n,\sigma,\lambda,\theta}=\tau)= \frac{1}{n!}e^{n\theta t_\sigma(\tau)-F_n(\sigma,\lambda,\theta)} 
	\end{align}
	which verifies \eqref{eq:gibbs_lebesgue}. 
	From Theorem \ref{thm:gibbs_random} part $(iii)$, it follows that $\pi_{n,\sigma,\lambda,\theta}$ satisfies an LDP with speed $n$ and the good rate function $I_{\sigma,\lambda,\theta}(\cdot)$, which for  $\gamma \in \widetilde{\cM}$ equals
	\begin{align*}
		I_{\sigma,\lambda,\theta}(\gamma)=&\inf_{\nu\in \cM:\mathcal{O}(\nu)=\gamma}\{D(\nu|\lambda)-\theta t_\sigma(\nu)\}-\inf_{\nu\in \cM}\{D(\nu|\lambda)-\theta t_\sigma(\nu)\}\\
		=&D(\gamma|\lambda)-\theta t_\sigma(\gamma)-\inf_{\nu\in \widetilde{\cM}}\{D(\nu|\lambda)-\theta t_\sigma(\nu)\},
	\end{align*}
	where the last equality uses Proposition 4.2 in \cite{starr2018phase}. 
	
	\medskip
	
	$(ii)$ Comparing \eqref{eq:gibbs_lebesgue2} along with \eqref{eq:gibbs_lebesgue} we get $$Z_n(\sigma,\theta)=\log n!+F_n(\sigma,\lambda,\theta).$$ 
	From this, the desired conclusion follows on noting the asymptotics of $F_n(\sigma,\lambda,\theta)$ in Theorem \ref{thm:gibbs_random} part $(i)$, along with the observation
	\begin{equation}\label{eq:rew}
		\sup_{\nu\in \cM}\{\theta t_\sigma(\nu)-D(\nu|\lambda)\}=\sup_{\nu\in \widetilde{\cM}}\{\theta t_\sigma(\nu)-D(\nu|\lambda)\}.
	\end{equation}
	Here the above display again uses Proposition 4.2 in \cite{starr2018phase}.
	
	\medskip
	
	$(iii)$ The uniqueness of the optimizer $\nu_{\sigma,\lambda,\theta}$ and the convergence in probability of $\pi_{n,\sigma,\lambda,\theta}$ to $\nu_{\sigma,\lambda,\theta}$ follows from Theorem \ref{thm:gibbs_random2} part $(i)$ and \eqref{eq:rew}.
\end{proof}

We conclude this section showing that for \cnc\ measures $\mu\in\cM$ (recall Definition \ref{def:cc_cnc}), the map $\theta\mapsto \nu_{\sigma,\mu,\theta}$ is indeed non-constant in a small interval around the origin, proving Proposition \ref{prop:cnc}.

\begin{proof}[Proof of Proposition \ref{prop:cnc}]
	For both conclusions, we give the proof for $\theta\in[0,\theta_c]$, noting that the proof for $\theta\in(-\theta_c,0)$ proceeds in a similar manner. Also, we omit the dependence of $(\sigma,\mu)$ on $\nu_{\sigma,\mu,\theta}$, and refer to it as simply $\nu_\theta$ for simplicity of notation.
	
	$(i)$ Suppose $\nu_{\theta_1}=\mu$ for some $\theta_1\in (0,\theta_c)$. Then for any $\theta\in (0,\theta_1)$, part $(ii)$ of Theorem \ref{thm:gibbs_random2} gives
	$$t_{\sigma}(\mu)=t_\sigma(\nu_0)\le t_{\sigma}(\nu_\theta)\le t_{\sigma}(\nu_{\theta_1})=t_{\sigma}(\mu),$$ 
	and so $t_{\sigma}(\nu_\theta)$ is constant on $[0,\theta_1]$. Part $(iii)$ of Theorem \ref{thm:gibbs_random2} then gives that $\nu_\theta\equiv \mu$ for $\theta\in [0,\theta_1]$. Thus in the Euler-Lagrange equation in part $(i)$ of Theorem \ref{thm:gibbs_random}, $g(\cdot)\equiv 1$ for $\theta\in [0,\theta_1]$. Taking $\log$  and differentiating with respect to $\theta$ we get the equation
	\begin{align}\label{teq}
		W(z)=\frac{\int_{[0,1]^2} W(z)e^{\theta W(z)}\d\mu(z)}{\int_{[0,1]^2} e^{\theta W(z)}\d\mu(z)}, 
		\quad\text{with}\quad
		W(z):=k\int_{[0,1]^{2k-2}} h_\sigma(z,z_2,\ldots,z_k)\prod_{a=2}^k \d\mu(z_i).
	\end{align}	
	Note that the right-hand side of the first equation in \eqref{teq} is independent of $z$. Thus $W(z)$ is a constant $\mu$-almost surely, which is equivalent (by the relation in the left-hand side of \eqref{teq}) to saying that $\mu$ is \cc. This is a contradiction.
	
	\medskip
	
	$(ii)$ By part $(i)$ above we have $\nu_\theta \neq \mu$ for all $\theta\in (\theta,\theta_c)$. By Theorem \ref{thm:gibbs_random2} parts $(ii)$ and $(iii)$ we get $t_{\sigma}(\nu_\theta)>t_{\sigma}(\mu)$, as desired.
\end{proof}

\section{Proofs of the applications to constrained $\mu$-random permutations}\label{sec4}

In this section we prove results related to $\mu$-random permutations under constraints.

\begin{proof}[Proof of Theorem \ref{thm:constrain}]
	
	$(i)$ Assume for the moment that the condition of Lemma \ref{lem:ldp} part $(v)$ is satisfied. Using the LDP for $\pi_{n,\mu}$ from Theorem \ref{thm:main}, it follows from Lemma \ref{lem:ldp} part $(i)$ and $(v)$ that conditioned on the event  $\{t_\sigma(\pi_n)\ge \delta\}$, the sequence $\pi_{n,\mu}$ converges in probability to the minimizers of
	$$\inf_{\gamma\in \widetilde{\mathcal{M}}:t_\sigma(\gamma)\ge \delta} I_{\mu}(\gamma)=\inf_{\gamma\in \widetilde{\mathcal{M}}:t_\sigma(\gamma)\ge \delta}\left\{ \inf_{\nu\in \mathcal{M}:\mathcal{O}(\nu)=\gamma}D(\nu|\mu)\right\},$$
	where the last expression follows from the definition of $I_\mu(\cdot)$ given in \eqref{eq:rate}.
	The minimizers of the above optimization problem are easily seen to be $\mathcal{O}(\mathcal{G}(\sigma, \mu,\delta))$, and so we have shown $d(\pi_{n,\mu},\mathcal{O}(\mathcal{G}(\sigma, \mu,\delta)))\xrightarrow{P}0$, as desired. 
	
	To complete the proof, we need to verify the condition of Lemma \ref{lem:ldp} part (v), which translates to $$\inf_{\gamma\in \widetilde{\cM}:t_\sigma(\gamma)\ge \delta}I_\mu(\gamma)=\inf_{\gamma\in \widetilde{\cM}:t_\sigma(\gamma)>\delta}I_\mu(\gamma)\Leftrightarrow \inf_{\nu\in \cM:t_\sigma(\nu)\ge \delta}D(\nu|\mu)=\inf_{\nu\in \cM:t_\sigma(\nu)> \delta}D(\nu|\mu).$$
	The last equality, by \eqref{eq:f}, is the same as $G(\sigma,\mu,\delta)=\lim_{t\rightarrow\delta+}G(\sigma,\mu,t)$, which is equivalent to the assumed right continuity of $G(\sigma,\mu,\cdot)$ at $\delta$.
	
	\medskip
	
	For the rest of the proof we assume that $\mu$ is \cnc\ with respect to $\sigma$ (recall Definition \ref{def:cc_cnc}). Let also $\theta_c$ and $\nu_{\sigma,\mu,\theta}$ be defined as in Theorem \ref{thm:gibbs_random2}. We also set $\nu_{\theta}:=\nu_{\sigma,\mu,\theta}$ for simplicity of notation.
	
	\medskip 
	
	$(ii,a)$ By Theorem \ref{thm:gibbs_random2} part $(ii)$ the map $\theta\mapsto t_{\sigma}(\nu_\theta)$ is continuous and non-decreasing on $(-\theta_c,\theta_c)$. With $\delta_c$ as in the statement of the theorem, i.e., 
	$\delta_c=\sup_{\theta\in (-\theta_c,\theta_c)} t_{\sigma}(\nu_\theta)=\lim_{\theta\to \theta_c^-}t_{\sigma}(\nu_\theta),$
	part $(ii)$ of Proposition \ref{prop:cnc} implies $\delta_c>t_\sigma(\mu)$. The fact that $\hat{\theta}(\delta)$ is well defined for $\delta\in(t_\sigma(\mu),\delta_c)$ and satisfies $t_\sigma\left(\nu_{\hat{\theta}(\delta)}\right)=\delta$ follows from continuity and monotonicity of $\theta\mapsto t_\sigma\left(\nu_{\theta}\right)$.
	
	\medskip
	
	$(ii,b)$ It is enough to show that  for all $\mu'\in\mathcal M$ such that $t_{\sigma}(\mu')\ge \delta$ and $\mu'\neq \nu_{\hat\theta(\delta)}$ we have $D(\mu'|\mu) > D\left(\nu_{\hat\theta(\delta)}\middle|\mu\right)$. Assume on the contrary that there exists $\mu'\neq \nu_{\hat\theta(\delta)}$ such that $D(\mu'|\mu)\le D\left(\nu_{\hat\theta(\delta)}\middle|\mu\right)$ and $t_{\sigma}(\mu')\ge \delta$, then
	\begin{align*}
		\hat\theta(\delta) t_{\sigma}(\mu')-D(\mu'|\mu) \ge \hat\theta(\delta) t_{\sigma}\left(\nu_{\hat\theta(\delta)}\right)-D\left(\nu_{\hat\theta(\delta)}\middle|\mu\right),
	\end{align*} 
	where we used that $t_\sigma\left(\nu_{\hat{\theta}(\delta)}\right)=\delta$ from part $(ii,a)$.
	As $\nu_{\hat\theta(\delta)}$ is the optimizer of the right-hand side of \eqref{eq:opt}, we must have $\mu'$ to be another optimizer. But this contradicts the uniqueness of the maximizer of the right-hand side of \eqref{eq:opt} (part $(i)$ of Theorem \ref{thm:gibbs_random2} used with $\hat{\theta}(\delta)\in (0,\theta_c)$). Thus $\mathcal{G}(\sigma,\mu,\delta)=\left\{\nu_{\hat\theta(\delta)}\right\}$, as desired.  
	
	\medskip
	
	$(ii,c)$ This is a consequence of Theorem \ref{thm:gibbs_random} part $(ii)$. 
	
	\medskip
	
	$(ii,d)$ It suffices to show that $G(\sigma,\mu,\cdot)$ is right continuous on $(t_{\sigma}(\mu),\delta_c)$; then the result follows from part $(i)$. To this effect, let $\delta_k \downarrow \delta \in (t_{\sigma}(\mu),\delta_c)$. By definition of $\hat{\theta}(\delta)\in(0,\theta_c)$ as in the statement of the theorem, we have $\hat{\theta}(\delta_k) \downarrow \theta'$ for some $\theta'\in [0,\theta_c]$. We claim that $\theta'=\hat\theta(\delta)$. Since $\theta\mapsto t_{\sigma}(\nu_{\theta})$ is non-decreasing and continuous for $\theta\in(-\theta_c,\theta_c)$ (Theorem \ref{thm:gibbs_random2} parts $(ii)$), we have 
	$$t_{\sigma}(\nu_{\theta'})= \lim_{k\to\infty}t_\sigma\left(\nu_{\hat\theta(\delta_k)}\right)=\lim_{k\to\infty}\delta_k=\delta,$$ 
	where we used again that $t_\sigma\left(\nu_{\hat\theta(\delta_k)}\right)=\delta_k$ from $(ii,a)$.
	Thus by definition $\hat\theta(\delta) \ge \theta'$. On the other hand, form Proposition \ref{prop:cnc} part $(ii)$ we have that $t_{\sigma}\left(\nu_{\hat\theta(\delta_k)}\right)>t_{\sigma}\left(\nu_{\hat\theta(\delta)}\right)$, which forces $\hat\theta(\delta_k) > \hat\theta(\delta)$. Taking $k\to \infty$, we see that $\theta' \ge \hat\theta(\delta)$. Hence $\theta'=\hat\theta(\delta)$. 
	
	Recall now the definition of $F(\sigma,\mu,\theta)$ from \eqref{eq:opt}. 
	By continuity of $F(\sigma,\mu,\cdot)$ proved in Theorem \ref{thm:gibbs_random2} parts $(ii)$ and the continuity of $t_{\sigma}(\nu_{\cdot})$ on $(-\theta_c,\theta_c)$ we have
	\begin{align*}
		G(\sigma,\mu,\delta_k)=D\left(\nu_{\hat\theta(\delta_k)}\middle|\mu\right) & \stackrel{\eqref{eq:opt}}{=}{\hat\theta(\delta_k)} t_{\sigma}\left(\nu_{\hat\theta(\delta_k)}\right)-F\left(\sigma,\mu,{\hat\theta(\delta_k)}\right) \\ & \to {\hat\theta(\delta)} t_{\sigma}\left(\nu_{\hat\theta(\delta)}\right)-F\left(\sigma,\mu,{\hat\theta(\delta)}\right)=G(\sigma,\mu,\delta).
	\end{align*}
	Thus $G(\sigma,\mu,\cdot)$ is right continuous. This completes the proof.
\end{proof}

\section{Proofs of the applications to inversions and examples of \cc\ permutons with respect to inversions} \label{sec5}
We prove here all the results that focus on the specific case of inversions. We start by proving in the next section Proposition \ref{prop:counter} and \ref{prop:xi}, concerning the non-uniqueness of the optimizers for the optimization problems appearing in Theorem \ref{thm:gibbs_random2} and Theorem \ref{thm:constrain}.

\subsection{Non-uniqueness of the optimizers}
We recall that $D_{11}=[0,\frac12]^2,$ $D_{21}=[\frac12,1]\times[0,\frac12],$ $D_{21}=[0,\frac12]\times [\frac12,1]$ and $D_{22}=[\frac12,1]^2$.

\begin{proof}[Proof of Proposition \ref{prop:counter}] 
	Given any measure $\nu\in \cM$ such that $D(\nu|\xi)<\infty$, we can identify two functions $f,g:[0,\frac12]\to \R$ such that 
	$$\frac{\d\nu}{\d\xi}(x,y)= f(x)\cdot\mathds{1}{\left\{x\in [0,\tfrac12),y=\tfrac12-x\right\}}+ g(x-\tfrac12)\cdot\mathds{1}{\left\{x\in [\tfrac12,1],y=\tfrac32-x\right\}}.$$ 
	Let $\lambda$ be the Lebesgue measure on $[0,1]$. Then note that $$\int_{[0,1/2]} (f+g)\d \lambda=1 \qquad \text{ and }\qquad t_{{21}}(\nu)=\left(\int_{[0,1/2]} f\d \lambda\right)^2+\left(\int_{[0,1/2]}g \d \lambda\right)^2=p^2+q^2,$$
	where $p=\int_{[0,1/2]}fd\lambda=\nu(D_{11}), q=\int_{[0,1/2]}gd\lambda=\nu(D_{22})$, with $p+q=1$. Furthermore,
	\begin{align*}
		D(\nu|\xi)=&\int_{[0,1/2]} f\log f\d \lambda+\int_{[0,1/2]} g \log g\d \lambda\\
		=&p\log p+q\log q+\log 2+pD(\gamma_f|2\lambda)+qD(\gamma_g|2\lambda),
	\end{align*}
	where $\gamma_f$ and $\gamma_g$ are the probability measures on $[0,1/2]$ induced by the functions $f,g$ respectively, and $2\lambda$ is the Lebesgue measure on $[0,1/2]$.
	Thus we have (recall that $\mathcal{F}({21},\xi,\theta)$ denotes the set of optimizers for the optimization problem in the right-hand side of \eqref{eq:opt})
	\begin{align*}
		\mathcal{F}({21},\xi,\theta)=&\sup_{\nu\in \cM}\{\theta t_{21}(\nu)-D(\nu|\xi)\}\\
		=&\sup_{p, f, g}\{\theta (p^2+q^2)-p\log p-q\log q-pD(\gamma_f|2\lambda)-qD(\gamma_g|2\lambda)\}.
	\end{align*}
	The optimum is attained when
	$\gamma_f=\gamma_g=2\lambda$, which implies the distribution restricted to the boxes $D_{11}, D_{22}$ are uniform (denoted by $\xi_{11},\xi_{22}$ respectively, in the statement of the proposition). To characterize the optimizers completely,  it remains to solve the optimization problem
	$$\sup_{p\in [0,1]}\{\theta(p^2+q^2)-p\log p-q\log q\},$$
	which on setting $x=p-q$ becomes
	$$\sup_{x\in [-1,1]}\left\{\frac{\theta}{2}(1+x^2)-\frac{1+x}{2}\log \frac{1+x}{2}-\frac{1-x}{2}\log\frac{1-x}{2}\right\}.$$
	This is the same optimization problem obtained from analyzing the partition function of the Curie--Weiss--Ising model (see Chapter 2 of \cite{vel} for example). The optimizers are given by
	\begin{align*}
		x_{\rm opt}=\begin{cases}
			0\text{ if }\theta\le 1,\\
			\{-m_\theta,m_\theta\}\text{ if }\theta>1.
		\end{cases}
	\end{align*} 
	Here $m_\theta$ is as defined in the statement of the proposition. We have thus explicitly computed the optimizers for all $\theta\in\R$, and proved that $|\mathcal{F}({21},\xi,\theta)|=2$ when $\theta>1$. To conclude that $|\mathcal{O}(\mathcal{F}({21},\xi,\theta))|=2$, we invoke Lemma \ref{lem:dmat} part $(ii)$. To show that the $\mathcal{O}(\cdot)$ projections of the two optimizers in $\mathcal{F}({21},\xi,\theta)$ are distinct, it suffices to note that for both the optimizers $\nu$ in $\mathcal{F}({21},\xi,\theta)$ we have 
	$$\nu(D_{11})\ne \nu(D_{22})\quad \text{ and } \quad \nu(D_{12}\cup D_{21})=0, $$
	and so the assumption in \eqref{eq:dmat} holds.
\end{proof}

\begin{proof}[Proof of Proposition \ref{prop:xi}]
	Note that by definition of $\xi$, we have that $t_{21}(\xi)=\frac12$ and $\alpha_{21}(\xi)=1$. With a simple computation, we also have that $\xi$ is a \cc\ permuton. 
	
	It remains to investigate the set of optimizers $\mathcal{G}(21,\xi,\delta)$ of the optimization problem in \eqref{eq:f}. 
	Fix $\delta\in (\frac12,1)$. Going through a similar calculation as in the proof of Proposition \ref{prop:counter}, 
	we need to optimize
	$$\inf_{p,f,g: p^2+q^2\ge \delta}\{p\log p+(1-p)\log (1-p)+\log 2+pD(\gamma_f|2\lambda)+qD(\gamma_g|2\lambda)\}.$$
	Thus the minimization problem in \eqref{eq:f} is attained when
	$\gamma_f=\gamma_g=2\lambda$.
	We thus need to optimize
	\begin{align*}
		G(21,\xi,\delta)=\inf_{p^2+(1-p)^2 \ge \delta} \Big[p\log p+(1-p)\log (1-p)+\log 2\Big],
	\end{align*}
	which has two solution $\frac{1\pm \sqrt{2\delta-1}}{2}$. Consequently the minimizers are given by 
	$$\left\{\frac{1+\sqrt{2\delta-1}}{2}\xi_{11}+\frac{1-\sqrt{2\delta-1}}{2}\xi_{22}, \frac{1-\sqrt{2\delta-1}}{2}\xi_{11}+\frac{1+\sqrt{2\delta-1}}{2}\xi_{22}\right\},$$
	where we recall that $\xi_{11}$ and $\xi_{22}$ are the uniform measures on the diagonal boxes $D_{11}$ and $D_{22}$, respectively. 
	Hence $|\mathcal{G}(21,\xi,\delta)|=2$. To conclude that $|\mathcal{O}(\mathcal{G}(21,\xi,\delta))|=2$, we can proceed as in the final part of the previous proof.
\end{proof}

\subsection{Interchanging conditioning events and limits}

We now prove another application to inversions, that is Theorem \ref{thm:gibbs4}.

\begin{proof}[Proof of Theorem  \ref{thm:gibbs4}] 
	$(i)$ We start by assuming that all the assumptions of Lemma \ref{lem:ldp} parts $(i)$ and $(v)$ hold for
	$$X_n=\pi_{n,\sigma,\lambda,\theta},\quad J(\gamma)=I_{\sigma,\theta}(\gamma), \quad T(\cdot)=t_\sigma(\cdot),\quad U=(-\infty,\tfrac{1}{k!}),$$
	where we recall that $I_{\sigma,\theta}(\gamma)$ was defined in \eqref{eq:gd_rate}.
	Then the desired conclusion follows on noting that the unique solution to the optimization problem
	\begin{align*}
		\inf_{\nu\in \widetilde{\cM}:t_\sigma(\nu)\le \frac{1}{k!}}\{D(\nu|\lambda)-\theta t_\sigma(\nu)\}
	\end{align*}
	is $\lambda$ (the Lebesgue measure on $[0,1]^2$).
	
	Note that the general assumption stated at the beginning of Lemma \ref{lem:ldp} hold thanks to Corollary \ref{cor:exp}. Hence, to complete the proof, we are left to show that part $(v)$ of Lemma \ref{lem:ldp} is applicable, for which we need to verify that
	$$\inf_{\nu\in \cM:t_\sigma(\nu)\le \frac{1}{k!}} \{D(\nu|\lambda)-\theta t_\sigma(\nu)\}=\inf_{\nu\in \cM:t_\sigma(\nu)<\frac{1}{k!}} \{D(\nu|\lambda)-\theta t_\sigma(\nu)\}.$$
	As shown above, the left-hand side has the unique optimizer $\lambda$. To complete the proof, it thus suffices to show that
	\begin{align}\label{eq:effect}
		\inf_{\nu\in \cM:t_\sigma(\nu)<\frac{1}{k!}} \{D(\nu|\lambda)-\theta t_\sigma(\nu)\}\le -\tfrac{\theta}{k!}.
	\end{align}
	To this effect, setting $\nu_{\alpha}:=\nu_{\sigma,\lambda,\alpha}$ as in Theorem \ref{thm:gibbs_random2} and recalling that from Theorem \ref{thm:gibbs_random2} part $(iv)$ the map $\alpha\mapsto \nu_\alpha$ is continuous in total variation for $\alpha\in (-\theta_c,\theta_c)$, we get
	$$ \nu_\alpha\stackrel{w}{\to}\lambda \quad\mbox{as}\quad \alpha \to 0^-,\qquad\text{ and }\qquad t_\sigma(\nu_\alpha)<t_\sigma(\lambda)=\tfrac{1}{k!},\quad \text{for} \quad\alpha\in (-\theta_c,0),$$
	where the second claim follows from Proposition \ref{prop:cnc} part $(ii)$.
	Thus as $\alpha\to 0-$, we have
	\begin{align*}
		\theta t_\sigma(\nu_\alpha)-D(\nu_\alpha|\lambda)&=(\theta-\alpha)t_\sigma(\nu_\alpha)+\alpha t_\sigma(\nu_\alpha)-D(\nu_\alpha|\lambda)\\
		&=(\theta-\alpha)t_\sigma(\nu_\alpha)+\sup_{\nu\in \cM}\{\alpha t_\sigma(\nu)-D(\nu|\lambda)\}\\ &
		\rightarrow \theta t_\sigma(\lambda)+\sup_{\nu\in \cM}\{-D(\nu|\lambda)\}=\tfrac{\theta}{k!},
	\end{align*}
	where in the last line we used the continuity of $t_\sigma(\cdot)$ at $\lambda$ (Lemma \ref{lem:rate}) and  the continuity in $\alpha$ of $F(\sigma,\lambda,\alpha)=\sup_{\nu\in \cM}\{\alpha t_\sigma(\nu)-D(\nu|\lambda)\}$ (Theorem \ref{thm:gibbs_random2} part $(ii)$). This verifies \eqref{eq:effect}, and hence completes the proof of part $(i)$ of the theorem.
	
	\medskip
	
	$(ii)$ Recall that $\nu_{\theta}=\nu_{21,\lambda,\theta}$. By \cite{Shannon,starr2018phase} we know $\nu_\theta$ has a density supported on all of $[0,1]^2$. Hence by Proposition \ref{prop:CNC21}, $\nu_{\theta}$ is \cnc. Thus by Theorem \ref{thm:constrain} $(ii)$ (and Remark \ref{rem:o_directict}), $\pi_{n,\nu_{\theta}}$ conditioned on the event $\{t_{{21}}(\pi_{n,\nu_{\theta}})\le \frac1{2}\}$ converges weakly to $\mathcal{O}(\nu_{{21},\nu_{\theta},\beta})$, for some $\beta<0$ (depending on $\theta$) satisfying $t_{21}(\nu_{{21},\nu_{\theta},\beta})=\frac{1}{2},$ provided that
	\begin{align}\label{eq:check}
		t_{21}(\nu_{{21},\nu_{\theta},-\theta_c})<\tfrac{1}{2}.
	\end{align}
	
	To complete the proof, we need to verify that \eqref{eq:check} holds for all $\theta$ small enough, and also verify that the measure $\mathcal{O}(\nu_{{21},\nu_{\theta},\beta})$ is not Lebesgue measure, or equivalently, the measure $\nu_{{21},\nu_{\theta},\beta}$ is not a product measure. Deferring the proof of \eqref{eq:check}, we first verify the second conclusion.
	
	To this end, setting $g:=\frac{d\nu_{{21},\nu_{\theta},\beta}}{d\nu_{\theta}}$, using the fixed point equation of Theorem \ref{thm:constrain} part $(ii,c)$ yields
	\[ g(x,y)=\Con^{-1} \exp\Big(2\beta \int_{[0,1]^{2}}h_{21}((x,y),(u,v)) g(u,v) d\nu_{\theta}(u,v)\Big).\]
	Let $f$ be the density of $\nu_{\theta}$ w.r.t.~Lebesgue measure. Then $\nu_{{21},\nu_{\theta},\beta}$ has a density $fg$ with respect to Lebesgue measure and the above integral equation becomes
	\[ g(x,y)=\Con^{-1} \exp\Big(2\beta \int_{[0,1]^{2}}h_{21}((x,y),(u,v)) g(u,v)f(u,v)dudv\Big).\] 
	By way of contradiction, assume $\nu_{{21},\nu_{\theta},\beta}$ is a product measure. Then $f(x,y)g(x,y)\stackrel{a.s.}{=}a(x)b(y)$. Plugging this above gives
	\begin{align*}
		{f(x,y)} & =\Con \cdot a(x)b(y)\exp\Big(-2\beta \int_{[0,1]^{2}}h_{21}((x,y),(u,v)) a(u)b(v)du dv\Big).
	\end{align*}
We use the definition of $h_{\sigma}$ from \eqref{def:hs1} with $\sigma=21$. The expression in \eqref{def:hs1} simplifies to
\begin{align*}
	h_{21}((x,y),(u,v)) = \ind\{x>u,v<y\}+\ind\{x<u,v>y\}.
\end{align*}
Using this we get that
\begin{align*}
	{f(x,y)} 
	=\Con\cdot a(x)b(y)\exp\Big(-2\beta[A(x)(1-B(y))+(1-A(x))B(y))]\Big).
\end{align*}
	where $A(x):=\int_0^x a(u)du, B(y):=\int_0^y b(u)du$. This leads to
	\begin{align*}
		\frac{\partial^2}{\partial x \partial y} \log f(x,y)= 4\beta \cdot a(x)b(y).
	\end{align*}
	But the Euler-Lagrange equation for $f$ forces $\frac{\partial^2}{\partial x \partial y} \log f(x,y)= 4\theta \cdot f(x,y)$ (see also \cite{Shannon,starr2018phase}). This implies $\theta=\beta$ and that $\nu_{\theta}$ is a product measure, which is a contradiction for $\theta\neq 0$.	
	
	To complete the proof, it remains to verify \eqref{eq:check} for all $\theta$ small enough. To this effect, using Theorem \ref{thm:gibbs_random2} $(iv)$ we have $$\nu_{{21},\lambda,\theta}\stackrel{TV}{\to} \nu_{{21},\lambda,0}=\lambda\text{ as }\theta \downarrow 0.$$ This, along with Theorem \ref{thm:gibbs_random2} (v) and Lemma \ref{lem:rate} give
	$$\nu_{{21}, \nu_{\theta},-\theta_c} \stackrel{w}{\to} \nu_{-\theta_c}\Rightarrow t_{{21}}(\nu_{{21}, \nu_{\theta},-\theta_c} )\to t_{{21}}(\nu_{-\theta_c})<\tfrac12.$$ 
	Here the last inequality uses Proposition \ref{prop:cnc}, and the fact that $\lambda$ is \cnc. This completes the proof of the theorem.
\end{proof}

\subsection{Existence of a phase transition for a generalized version of the Mallows model}

This section is devoted to the proof of Theorem \ref{ppn:counter}. Before going to the proof of the theorem, we first establish a technical lemma about the measure $\mu_\ell$ introduced in Definition \ref{def:counter2}. Recall that $D_{11}=[0,\frac12]^2,$ $D_{21}=[\frac12,1]\times[0,\frac12],$ $D_{21}=[0,\frac12]\times [\frac12,1]$ and $D_{22}=[\frac12,1]^2$.

\begin{lemma}\label{lem:inv_symm}
	For any measure $\nu$ with density $\frac{d\nu}{d\lambda}=g$ with respect to Lebesgue measure on the unit square, define a measure $\tilde{\nu}$ on the unit square $[0,1]^2$ with density
	$$\frac{d\tilde{\nu}}{d\lambda}(x,y):=4g(2x,2y)\mathds{1}{\{(x,y)\in D_{11}\}}.$$
	Then the following conclusions hold:
	
	(i) $t_\sigma(\nu)=t_\sigma(\tilde{\nu})$, for any $\sigma\in S$.
	
	(ii) $D(\nu|\mu_\ell)-D(\tilde{\nu}|\mu_\ell)=\nu(D_{12}\cup D_{21})[\log(1+\ell)-\log (1-\ell)]-\log 4.$
\end{lemma}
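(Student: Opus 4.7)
The plan is to observe that $\tilde{\nu}$ is simply the push-forward of $\nu$ under the affine map $T:[0,1]^2\to D_{11}$, $T(x,y)=(x/2,y/2)$, so both parts reduce to explicit computations. By the change-of-variables formula, if $(X,Y)\sim \nu$ then $(X/2,Y/2)$ has density $4g(2x,2y)\mathds{1}\{(x,y)\in D_{11}\}$ with respect to $\lambda$, which matches $\tilde{\nu}$ by definition.

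For part $(i)$, I would exploit the fact that $T$ is strictly increasing in each coordinate, hence preserves the $x$-ordering and the $y$-ordering of any finite collection of points in $[0,1]^2$. Reading off the explicit formula for $h_\sigma$ in \eqref{def:hs1}, this immediately yields
\begin{align*}
h_\sigma(T(z_1),\ldots,T(z_k))=h_\sigma(z_1,\ldots,z_k)
\end{align*}
for every $\sigma\in S_k$ and every $z_1,\ldots,z_k\in[0,1]^2$. Integrating against $\nu^{\otimes k}$ then gives $t_\sigma(\tilde{\nu})=\tilde{\nu}^{\otimes k}(h_\sigma)=\nu^{\otimes k}(h_\sigma)=t_\sigma(\nu)$.

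For part $(ii)$, I would first identify the Lebesgue density $f_\ell$ of $\mu_\ell$: from Definition \ref{def:counter2}, $f_\ell=1+\ell$ on $D_{11}\cup D_{22}$ and $f_\ell=1-\ell$ on $D_{12}\cup D_{21}$. Splitting the integral defining $D(\nu|\mu_\ell)$ over these two regions produces
\begin{align*}
D(\nu|\mu_\ell)=\int_{[0,1]^2} g\log g\,d\lambda-\log(1+\ell)\,\nu(D_{11}\cup D_{22})-\log(1-\ell)\,\nu(D_{12}\cup D_{21}).
\end{align*}
Since $\tilde{\nu}$ is supported on $D_{11}$, where $f_\ell=1+\ell$, expanding $\log(4g(2x,2y))$ and making the substitution $(u,v)=(2x,2y)$ (with $du\,dv=4\,dx\,dy$ and using $\int g\,d\lambda=1$) yields
\begin{align*}
D(\tilde{\nu}|\mu_\ell)=\int_{[0,1]^2} g\log g\,d\lambda+\log 4-\log(1+\ell).
\end{align*}
Subtracting the two displays and using the relation $\nu(D_{11}\cup D_{22})=1-\nu(D_{12}\cup D_{21})$ collapses the entropy term and produces the claimed identity. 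The calculation is essentially bookkeeping: the only things that require care are the Jacobian factor $4$ in the change of variables and the piecewise description of $f_\ell$, so I do not anticipate a genuine conceptual obstacle.
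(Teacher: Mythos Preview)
Your proposal is correct and follows essentially the same approach as the paper: both identify $\tilde{\nu}$ as the push-forward of $\nu$ under the coordinatewise monotone map $(x,y)\mapsto(x/2,y/2)$ for part $(i)$, and both compute $D(\nu|\mu_\ell)$ and $D(\tilde{\nu}|\mu_\ell)$ separately via the piecewise density $f_\ell$ before subtracting for part $(ii)$. Your write-up simply makes explicit a few details (the invariance of $h_\sigma$, the Jacobian factor $4$) that the paper leaves implicit.
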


\begin{proof}
	$(i)$ This is immediate on noting that $\tilde{\nu}$ is the push-forward measure of $\nu$ under the map $(x,y)\mapsto (x/2,y/2)$, which is monotone in each coordinate.
	
	\medskip
	
	$(ii)$  Setting $$g:=\frac{d\nu}{d\lambda},\quad \tilde{g}:=\frac{d\tilde{\nu}}{d\lambda},$$
	a direct computation gives
	\begin{align*}
		D(\nu|\mu_\ell)=&D(\nu|\lambda)-\nu(D_{11}\cup D_{22}) \log(1+\ell)-\nu(D_{12}\cup D_{21})\log(1-\ell),\\
		D(\tilde{\nu}|\mu_\ell)=&D(\tilde{\nu}|\lambda)-\log(1+\ell)=D(\nu|\lambda)+\log 4-\log(1+\ell).
	\end{align*}
	On taking a difference, we get $(ii)$.
\end{proof}

We turn to the proof of the main result of this section.

\begin{proof}[Proof of Theorem \ref{ppn:counter}] 
	$(i)$ Since for all $\ell\in [0,1]$, the support $\supp(\mu_{\ell})$ contains at least one between $D_{11}$ $D_{12}$, the desired conclusion follows from Proposition \ref{prop:CNC21} part $(i)$. Note also that $t_{21}(\mu_\ell)=\frac{2-\ell}{4}$, and $\alpha_{21}(\mu_\ell)=1$ for all $\ell\in [0,1]$.
	
	\medskip
	
	$(ii)$ If $\ell=0$ we have by definition $\mu_0=\lambda$, the Lebesgue measure on $[0,1]^2$. In this case, with $\sigma=21$, the optimizer $\nu_{\theta}$ for part $(i)$ of Theorem \ref{thm:gibbs_random} is known to be unique (see \cite{Shannon,starr2018phase}) for all $\theta\in\R$ and its density is given by 
	\begin{align}\label{smm}
		\Phi_\theta(x,y)=\frac{\frac{\theta}{2}\sinh\frac{\theta}{2}}{\left[e^{-\frac{\theta}4}\cosh(\theta(x-y)/2)-e^{\frac\theta4}\cosh(\theta(x+y-1)/2)\right]^2}.
	\end{align}
	Thus for each $\delta\in (\frac12,1)$ there exists a unique $\theta>0$ such that  $t_{{21}}(\nu_\theta)=\delta$. Following the arguments in part $(ii)$ of Theorem \ref{thm:constrain} we see that $\mathcal{G}(21,\lambda,\delta)=\left\{\nu_{\hat{\theta}(\delta)}\right\}$ for all $\delta\in (\frac12,1)$.
	
	\medskip
	
	$(iii)$ Since $\mu_\ell$ is \cnc, by Theorem \ref{thm:constrain} part $(ii)$ there exists $\delta_c(\ell)>t_{21}(\mu_\ell)=\frac{2-\ell}{4}$ such that $\mathcal{G}(21,\mu_\ell,\delta)$ has cardinality $1$ for $\delta\in (\frac{2-\ell}{4},\delta_c(\ell))$.
	
	\medskip
	
	$(iv)$ For $\delta\in (\frac{2-\ell}{4},1)$, let  $\nu\in \mathcal{G}({21}, \mu,\delta)$, and assume without loss of generality that 
	\begin{align}\label{eq:bound0}
		\nu(D_{11})\ge \nu(D_{22}).
	\end{align}
	Since $\nu$ is an optimizer, with $\tilde{\nu}$ as in Lemma \ref{lem:inv_symm} we must have $D(\nu|\mu_\ell)\le D(\tilde{\nu}|\mu_\ell)$, which in turn using Lemma \ref{lem:inv_symm} part $(ii)$ gives
	\begin{align}\label{eq:bound1}
		\nu(D_{12}\cup D_{21})\le \frac{\log 4}{\log(1+\ell)-\log(1-\ell)}.
	\end{align}
	Note also that 
	$$t_{21}(\nu)\le 1-2\nu(D_{11})\nu(D_{22})$$
	since pairs of points in $D_{11}$ and $D_{22}$ do not contribute to inversions.
	Invoking \eqref{eq:bound0} and recalling that $t_{21}(\nu)\ge \delta$, we get from the previous equation that
	\begin{align}\label{eq:bound2}
		\nu(D_{22})\le \sqrt{\frac{1-\delta}{2}}.
	\end{align}
	Note that there exists $\ell_c<1$ such that the following is true. For all $\ell\in(\ell_c,1]$ there exists $\delta'_c(\ell)<1$ such that
	\begin{equation}
		1-2\sqrt{\frac{1-\delta}{2}}-\frac{9\log 4}{\log(1+\ell)-\log(1-\ell)}>0,\quad \text{for all }\delta\in\delta'_c(\ell),1),
	\end{equation}
	Combining this bound with \eqref{eq:bound1} and \eqref{eq:bound2}, it follows that 
	\begin{align}
		\nu(D_{11})-\nu(D_{22})=1-2\cdot\nu(D_{22})-\nu(D_{12}\cup D_{21})>8\cdot\nu(D_{12}\cup D_{21}),
	\end{align} 
	whenever $\ell\in(\ell_c,1]$ and $\delta\in\delta'_c(\ell),1)$.
	In particular, for these parameters the assumption of Lemma \ref{lem:dmat} part $(ii)$ holds. Therefore, we get $\mathcal{O}(\nu)\ne \mathcal{O}(\tilde{\nu})$, where $\tilde{\nu}$ is the push forward measure of $\nu$ via the map $T(x,y)=(1-x,1-y)$. But it is immediate that if $\nu$ is an optimizer, then so is $\tilde{\nu}$. Thus the cardinality of $\mathcal{O}(\mathcal{G}({21}, \mu,\delta))$ is at least $2$.
	\end{proof}

\subsection{Characterization of \cc\ and \cnc\ measures with respect to inversions}\label{sect:21CNCperm}

In this section we prove Proposition \ref{prop:CNC21}, which gives some characterizations of \cnc\ measures with respect to inversions.
We begin with the following lemma.

\begin{lemma}\label{lem:sum_area_is_const}
	Let $\mu$ be a \cc\ measure with respect to $\sigma=21$. Then there exists a constant $c\in[0,1]$ such that for all $(x,y)\in \supp(\mu)$ we have
	$$\mu([0,x]\times [0,y])+\mu([x,1]\times [y,1])=c.$$ 
\end{lemma}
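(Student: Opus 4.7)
The plan is to translate the CC condition for $\sigma=21$ into an identity for the joint cumulative distribution function of $\mu$ on the support, and then rewrite it in the form claimed.

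First I would make explicit what CC with $\sigma=21$ means. Since $k=2$ and $\pi_{(X_1,X_2),(Y_1,Y_2)}=21$ is equivalent to $(X_1-X_2)(Y_1-Y_2)<0$, given $(X_1,Y_1)=(x,y)$ and using that $\mu$ has continuous marginals (so $\mu(\{x\}\times[0,1])=\mu([0,1]\times\{y\})=0$),
\begin{equation*}
\P\bigl(\pi_{(X_1,X_2),(Y_1,Y_2)}=21\bigm|(X_1,Y_1)=(x,y)\bigr)=\mu([0,x]\times[y,1])+\mu([x,1]\times[0,y]).
\end{equation*}
Letting $F(x,y):=\mu([0,x]\times[0,y])$ and $F_X,F_Y$ denote the marginal CDFs, this right-hand side equals $F_X(x)+F_Y(y)-2F(x,y)$. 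Denote this function by $\phi(x,y)$.

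Next, I would observe that $\phi$ is continuous on $[0,1]^2$. Indeed, $F_X$ and $F_Y$ are continuous by assumption, and $F$ is continuous because marginal continuity forces $\mu(\{x\}\times[0,1])=\mu([0,1]\times\{y\})=0$ for all $x,y$, so there are no jumps along any horizontal or vertical line. The CC hypothesis then asserts that $\phi=c'$ holds $\mu$-almost surely for some constant $c'\in[0,1]$. Combining $\mu$-a.s.\ equality with continuity of $\phi$ yields $\phi\equiv c'$ on $\supp(\mu)$: if there were $(x_0,y_0)\in\supp(\mu)$ with $\phi(x_0,y_0)\ne c'$, then by continuity $\phi\ne c'$ on a neighborhood $U$ of $(x_0,y_0)$, but $\mu(U)>0$ by definition of support, contradicting the $\mu$-a.s.\ constancy of $\phi$.

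Finally, I would express the quantity of interest in terms of $F$ as well. Using continuous marginals again,
\begin{equation*}
\mu([0,x]\times[0,y])+\mu([x,1]\times[y,1])=F(x,y)+\bigl(1-F_X(x)-F_Y(y)+F(x,y)\bigr)=1-\phi(x,y).
\end{equation*}
On $\supp(\mu)$ this equals the constant $c:=1-c'\in[0,1]$, which is exactly the claim. No step here is a serious obstacle; the only subtlety is the passage from $\mu$-a.s.\ constancy to pointwise constancy on $\supp(\mu)$, which is handled by continuity of $\phi$.
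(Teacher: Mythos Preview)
Your proof is correct and follows essentially the same approach as the paper: compute the conditional probability for $\sigma=21$ explicitly, use continuity of the resulting function (from continuity of marginals) to upgrade the $\mu$-a.s.\ identity to a pointwise identity on $\supp(\mu)$, and then take the complement to obtain the stated form. The only cosmetic difference is that you work via the CDF $F$ and the function $\phi=F_X+F_Y-2F$, whereas the paper argues directly that the level set $\{\phi=c'\}$ is closed of full measure and hence contains $\supp(\mu)$.
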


\begin{proof}
	Let $\big((X_i,Y_i)\big)_{i=1,2}\stackrel{i.i.d.}{\sim}\mu$. Recall that $\mu$ is \cc\ with respect to $21$, if there exists a constant $c\in[0,1]$ such that
	$\P(\pi_{{\bf X},{\bf Y}}=21|(X_1,Y_1))=1-c$ almost surely. Since
	$$\P(\pi_{{\bf X},{\bf Y}}=21|(X_1,Y_1))=\mu\left([0,X_1]\times [Y_1,1]\right)+\mu\left([X_1,1]\times [0,Y_1]\right)$$,
	taking the complement, we a.s.\ have that
	$$\mu\left([0,X_1]\times [0,Y_1]\right)+\mu\left([X_1,1]\times [Y_1,1]\right)=c.$$
	Now the function $(x,y)\mapsto\mu\left([0,x]\times [0,y]\right)+\mu\left([x,1]\times [y,1]\right)$ is continuous, as $\mu$ has continuous marginals. Thus the set $\{(x,y):\mu\left([0,x]\times [0,y]\right)+\mu\left([x,1]\times [y,1]\right)=c\}$ is closed, and has $\mu$-measure $1$. Since $\supp(\mu)$ is the smallest closed set with probability $1$, it follows that
	$$\supp(\mu)\subseteq\{(x,y)\in {[0,1]^2}:\mu\left([0,x]\times [0,y]\right)+\mu\left([x,1]\times [y,1]\right)=c\},$$
	as desired.
\end{proof}

We can now prove Proposition \ref{prop:CNC21}.

\begin{proof}[Proof of Proposition \ref{prop:CNC21}]
	Let  $\mu$ be a \cc\ measure with respect to $21$. All subsequent a.s.~statements in this proof will be with respect to $\mu$. By Lemma \ref{lem:sum_area_is_const}, we know that there exists a constant $c\in [0,1]$ such that for all $(x,y)\in \supp(\mu)$ we have
	\begin{equation}\label{eq:equality_area_11}
		\mu\left([0,x]\times [0,y]\right)+\mu\left([x,1]\times [y,1]\right)=c.
	\end{equation}
	
	\medskip
	
	$(i)$ Assume, aiming for a contradiction, that the interior of $\supp(\mu)$ is non-empty.   Then there exists an open rectangle $R\subset \supp(\mu)$. Fix $x_1,x_2,y_1,y_2\in [0,1]$, such that $x_1<x_2$, $y_1<y_2$, and for all $1 \le i,j \le 2$, $(x_i,y_j)\in R$. Using these four points, the square $[0,1]^2$ can be divided into nine rectangles with $\mu$ mass $A$, $B$, $C$, $D$, $E$, $F$, $G$, $H$ and $I$, as indicated in Figure~\ref{fig:nine_areas}. The rest of the proof is devoted to showing that $E=0$. This would be a contradiction because $R\subseteq\supp(\mu)$, so no open subset of $R$ can have measure zero.
	\begin{figure}[ht]
		\begin{center}
			\includegraphics[width=2.0in,height=2.0in]{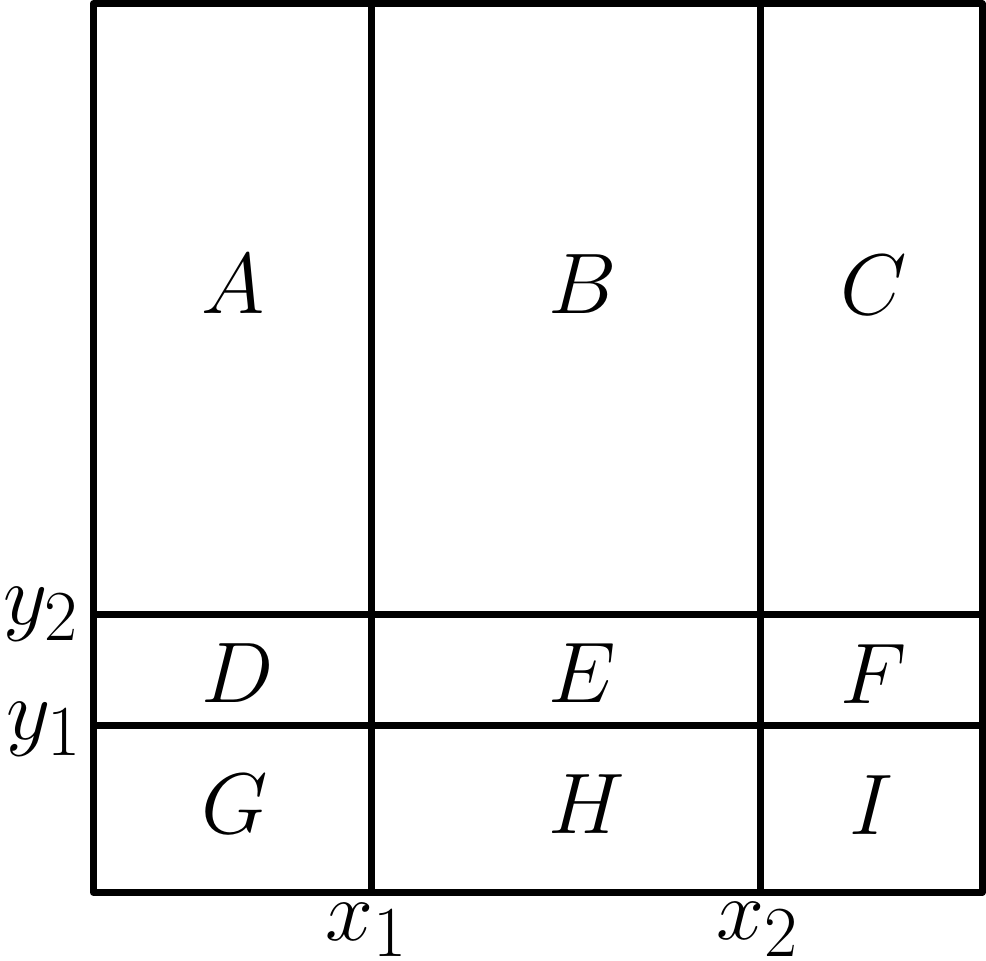}
			\caption{The square $[0,1]^2$ is divided by the points $(x_1,y_1)$ and $(x_2,y_2)$ into nine rectangles with $\mu$-mass $A$, $B$, $C$, $D$, $E$, $F$, $G$, $H$ and $I$, as shown in the picture.}
			\label{fig:nine_areas}
		\end{center}
	\end{figure}
	
	Using \eqref{eq:equality_area_11} with the pair of points $\{(x_1,y_1),(x_2,y_2)\}$, we obtain the equation
	\begin{equation}\label{eq:rel1}
		B+C+E+F+G=C+D+E+G+H \Longrightarrow B=D+H-F.
	\end{equation} 
	Similarly, \eqref{eq:equality_area_11} with the pair of points $\{(x_1,y_2),(x_2,y_2)\}$ and $\{(x_1,y_2), (x_2,y_1)\}$ we get the equations:
	\begin{align}
		B+C+D+G=C+D+E+G+H\Longrightarrow E=B-H,\label{eq:rel2}\\
		B+C+D+G=C+F+G+H\Longrightarrow H=B+D-F.\label{eq:rel3}
	\end{align}
	Substituting \eqref{eq:rel1} and \eqref{eq:rel3} in \eqref{eq:rel2}, we obtain that $E=H-B$. The latter relation and \eqref{eq:rel2} finally give that $E=0$.
	
	\medskip
	
	$(ii)$ Let $\mu$ be a \cc\ permuton with respect to $\sigma = 21$.
	Let $(x,y)\in\supp(\mu)$ be such that $\mu([0,x]\times[0,y])=0$. 
	Fix a second point $(x',y')\in\supp(\mu)$, such that $(x',y')\neq (x,y)$ and $\mu([0,x']\times[0,y'])=0$. 
	From Lemma \ref{lem:sum_area_is_const} we have that
	$$\mu([0,x]\times [y,1])+\mu([x,1]\times [0,y])=\mu([0,x']\times [y',1])+\mu([x',1]\times [0,y']).$$
	Now using that $\mu([0,x]\times[0,y])=0$, $\mu([0,x']\times[0,y'])=0$ and that $\mu$ has uniform marginals, we can rewrite the last equation as
	$$x+y=x'+y',$$
	concluding that the points $(x,y)$ and $(x',y')$ lie on the same line $x + y = b$ of slope $-1$. The constant $b$ is thus uniquely defined.
	
	\medskip
	
	$(iii)$  If $b=0$ the statement is trivial, so we assume $b>0$. Consider the continuous function
	$F(x,y)=\mu([0,x]\times [0,y])$. Assume for the sake of contradiction that $\supp(\mu)\cap \triangle \neq \varnothing$. Since $\triangle$ does not include the line $x+y=b$ by definition, there must exist $(u,v)\in \triangle$ such that $F(u,v)>0$. Set $$\til{b}=\operatorname{arg} \operatorname{inf}\{x+y \mid F(x,y)>0\}.$$
	Observe that $\til{b} \le u+v < b$. Take a sequence $(x_k,y_k)$ with $F(x_k,y_k)>0$ and $x_k+y_k\to \til{b}$. Passing to a subsequence, we may assume $x_k\to x_{\infty}$ and $y_k\to y_{\infty}$. Then by continuity of  $F$, we have $F(x_{\infty},y_{\infty})=0$ and $(x_{\infty},y_{\infty})\in \supp(\mu)$. Then by part $(ii)$, $\til{b}=b$ which is a contradiction.
\end{proof}

\subsection{Examples of  \cc\ permutons with respect to inversions}\label{sect:examples}

We start by constructing a one parameter family $\{\mu^{(z)}\}_{z\in[0,1]}$ of \cc\ permutons  with respect to inversions. Heuristically $\mu^{(z)}$ is the permuton obtained by spreading Lebesgue measure on the unique rectangle with inclination $45$ degrees inscribed in a unit square and having bottom intersection point at $(z,0)$ (see the left-hand side of Figure \ref{fig:rect_perm}). More formally, let $z$ be a point in $[0,1]$.  Let $L_1$ and $L_3$ denote the line segments with slope $-1$ connecting $(0,z)$ to $(z,0)$ and $(1-z,1)$ to $(1,1-z)$, respectively.  Similarly let $L_2$ and $L_4$ denote the line segments with slope $1$ connecting  $(z,0)$ to $(1,1-z)$ and $(0,z)$ to $(1-z,1)$, respectively.  The union of $L_1$, $L_2$, $L_3$ and $L_4$ forms a rectangle $R^{(z)}$ in $[0,1]^2.$  
For each of the line segments $L_i$ ($i=1,2,3,4$) we will define a measure $\mu^{(z)}_i$ as a rescaled Lebesgue measure. Let $\lambda$ be the Lebesgue measure on $[0,1]$.  Let $A$ be a Borel measurable set on $[0,1]^2$.  For each $i$, let $A_i = A\cap L_i$.  Finally let $\pi_x(A_i)$ be the projection of $A_i$ onto the $x$-axis and $\pi_y(A_i)$ the projection onto the $y$-axis.  As each line has slope $1$ or $-1$, the measures of the projections satisfy $\lambda(\pi_x(A_i)) = \lambda(\pi_y(A_i)).$  For each $i=1,2,3,4$, define $\mu^{(z)}_i(A) := \frac{1}{2} \lambda( \pi_x( A_i ) ) = \frac12 \lambda( \pi_y(A_i)).$
Finally we define the measure $\mu^{(z)} = \mu^{(z)}_1+\mu^{(z)}_2+\mu^{(z)}_3+\mu^{(z)}_4.$ It is simple to check from the construction above that the measure $\mu^{(z)}$ is indeed a permuton.

\begin{figure}[ht]
	\begin{center}		\includegraphics[scale=.8]{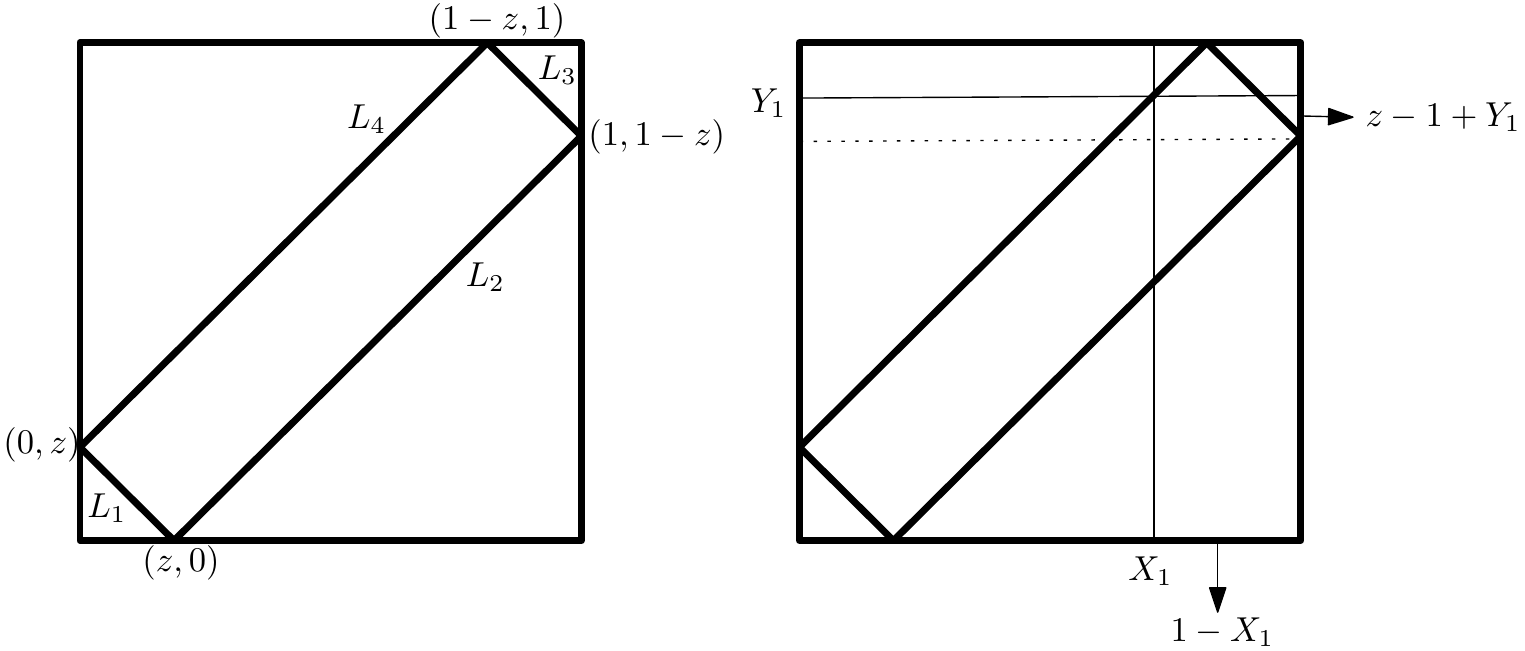}
		\caption{\textbf{Left:} A scheme for the construction of the permutons $\{\mu^{(z)}\}_{z\in[0,1]}$. 
			\textbf{Right:} A scheme for the proof of Proposition \ref{prop:CC_rect_perm}.}
		\label{fig:rect_perm}
	\end{center}
\end{figure}

\begin{proposition}\label{prop:CC_rect_perm}
	For all $z\in[0,1]$, the permuton $\mu^{(z)}$ is a \cc\ permuton with respect to $\sigma=21$.
\end{proposition}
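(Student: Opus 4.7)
The plan is to verify directly that $\mu^{(z)}$ satisfies the defining condition of being \cc\ with respect to $\sigma=21$. Since $\mu^{(z)}$ is a permuton and hence has continuous marginals, the map
\[
(x,y)\mapsto \mu^{(z)}\bigl([0,x]\times[y,1]\bigr)+\mu^{(z)}\bigl([x,1]\times[0,y]\bigr)
\]
is continuous on $[0,1]^2$ and coincides a.s.\ with $\P\bigl(\pi_{{\bf X},{\bf Y}}=21\mid (X_1,Y_1)=(x,y)\bigr)$. Hence $\mu^{(z)}$ will be \cc\ precisely when this continuous function is constant on $\supp(\mu^{(z)})=R^{(z)}=L_1\cup L_2\cup L_3\cup L_4$. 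Passing to complements, it is equivalent to show that
\[
S(x,y):=\mu^{(z)}\bigl([0,x]\times[0,y]\bigr)+\mu^{(z)}\bigl([x,1]\times[y,1]\bigr)
\]
is constant on $R^{(z)}$, and I will check that $S(x,y)=1-z$ everywhere on $R^{(z)}$, so that the CC constant equals $z$.

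To carry out the verification, I would first parameterize the segments explicitly ($L_1=\{(t,z-t):t\in[0,z]\}$, $L_2=\{(z+s,s):s\in[0,1-z]\}$, and similarly $L_3, L_4$). Fixing a point $(x,y)\in L_i$, I would then compute, for each $j\in\{1,2,3,4\}$, the masses $\mu^{(z)}_j\bigl([0,x]\times[0,y]\bigr)$ and $\mu^{(z)}_j\bigl([x,1]\times[y,1]\bigr)$. The key simplification is that each $L_j$ has slope $\pm 1$: the $\mu^{(z)}_j$-mass of any subarc equals one half of the length of its $x$-projection, so every mass computation boils down to an elementary one-dimensional interval length. Summing the four contributions and canceling common terms should then yield $S(x,y)=1-z$, independently of $i$ and of the position of $(x,y)$ on $L_i$.

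A useful shortcut is the central symmetry $T(x,y)=(1-x,1-y)$. One sees that $R^{(z)}$ is invariant under $T$, with $T(L_1)=L_3$ and $T(L_2)=L_4$, so $\mu^{(z)}$ itself is $T$-invariant; moreover $T\bigl([0,x]\times[0,y]\bigr)=[1-x,1]\times[1-y,1]$, whence $S(x,y)=S(T(x,y))$. This reduces the required verification from four sides to two, namely to $(x,y)\in L_1$ and $(x,y)\in L_2$.

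The main obstacle is purely organizational. For a given $(x,y)\in L_i$, each intersection $L_j\cap([0,x]\times[0,y])$ (and its counterpart with the upper-right rectangle) depends, through a short case split, on the position of $(x,y)$ along $L_i$ and occasionally on whether $z\le 1/2$ or $z>1/2$. The proof requires organizing these subcases so that the length contributions from the four $L_j$'s telescope to the single value $1-z$; this computation is routine but must be carried out with care, and completes the argument.
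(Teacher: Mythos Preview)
Your proposal is correct and follows essentially the same approach as the paper: compute $\P(\pi_{{\bf X},{\bf Y}}=21\mid (X_1,Y_1)=(x,y))$ (or its complement $S(x,y)$) on each side $L_i$ of the rectangle, using that each $L_i$ has slope $\pm1$ so the mass reduces to half an $x$-projection length, and conclude it equals the constant $z$. The paper carries out the computation on $L_4$ directly (using $Y_1=X_1+z$ there) and then asserts the other three sides are analogous; your use of the central symmetry $T$ to halve the casework is a minor organizational refinement of the same argument.
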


\begin{proof}
	We consider $\big((X_i,Y_i)\big)_{i=1,2}\stackrel{i.i.d.}{\sim}\mu^{(z)}$ and we condition on the event $\left\{(X_1,Y_1)\in L_4\right\}$ (see the right-hand side  of Figure \ref{fig:rect_perm}). 
	Note that a.s.
	\begin{equation}\label{eq:always-z}
		\P(\pi_{{\bf X},{\bf Y}}=21|(X_1,Y_1))
		=
		\mu^{(z)}\left([0,X_1]\times [Y_1,1]\cup[X_1,1]\times [0,Y_1]\right)=\frac{1-X_1+z-1+Y_1}{2}=z,
	\end{equation}
	where in the last equality we used that $Y_1=X_1+z$ since $(X_1,Y_1)\in L_4$.
	One can check that the same relation as in the last equation holds also when conditioning on $\left\{(X_1,Y_1)\in L_1\right\}$, $\left\{(X_1,Y_1)\in L_2\right\}$, and $\left\{(X_1,Y_1)\in L_3\right\}$. So the statement easily follows.
\end{proof}

\begin{remark}
	We note that some randomized versions of the permutons $\{\mu^{(z)}\}_{z\in[0,1]}$ were already investigated in \cite{borga2020square,borga2021almost} as permuton limits of square and almost square permutations.
\end{remark}

We now construct a more general family of \cc\ permutons with respect to inversions that uses the permutons $\{\mu^{(z)}\}_{z\in[0,1]}$ as building blocks. We denote by $\mathcal S^*$ the set of permutations whose diagrams have the following property: every point in the diagram has the same number of points to its top-left and bottom-right sides (see the left-hand side of Figure \ref{fig:rect_perm2} for an example). Given $\eta \in \mathcal S^*$ and $z\in[0,1]$, we then consider the permuton $\mu^*=\mu^*(\eta,z)$ constructed as follows:
we replace each dot in the diagram of $\eta$ with a rescaled version of the same rectangle $R^{(z)}$ so that it is exactly inscribed in the box originally containing the dot (see the middle picture in Figure \ref{fig:rect_perm2}). Then we spread Lebesgue measure on the support obtained by the union of boundaries of these rescaled rectangles (see the right-hand side of Figure \ref{fig:rect_perm2}).

\begin{remark}
	We note that here are infinitely many  permutations in $\mathcal S^*$. Indeed for every permutation $\eta\in\mathcal S^*$ it is possible to consider the substitution $\eta[\eta,\dots,\eta]$ (see for instance \cite[Definition 1.2.]{bassino2017universal} for a definition of the substitution operation for permutations) that is again a permutation in $\mathcal S^*$ of size $|\eta|^2$. We also note that $\mathcal S^*_1=\mathcal S_1$, $\mathcal S^*_2=\mathcal S_2$, there are 2 permutations of size 3 in  $\mathcal S^*_3$, that are $123$ and $321$, and there are 4 permutations of size 4 in  $\mathcal S^*_4$, that are $1234$,$1234$,$2143$ and $3412$.
\end{remark}

\begin{figure}[ht]
	\begin{center}		\includegraphics[scale=.9]{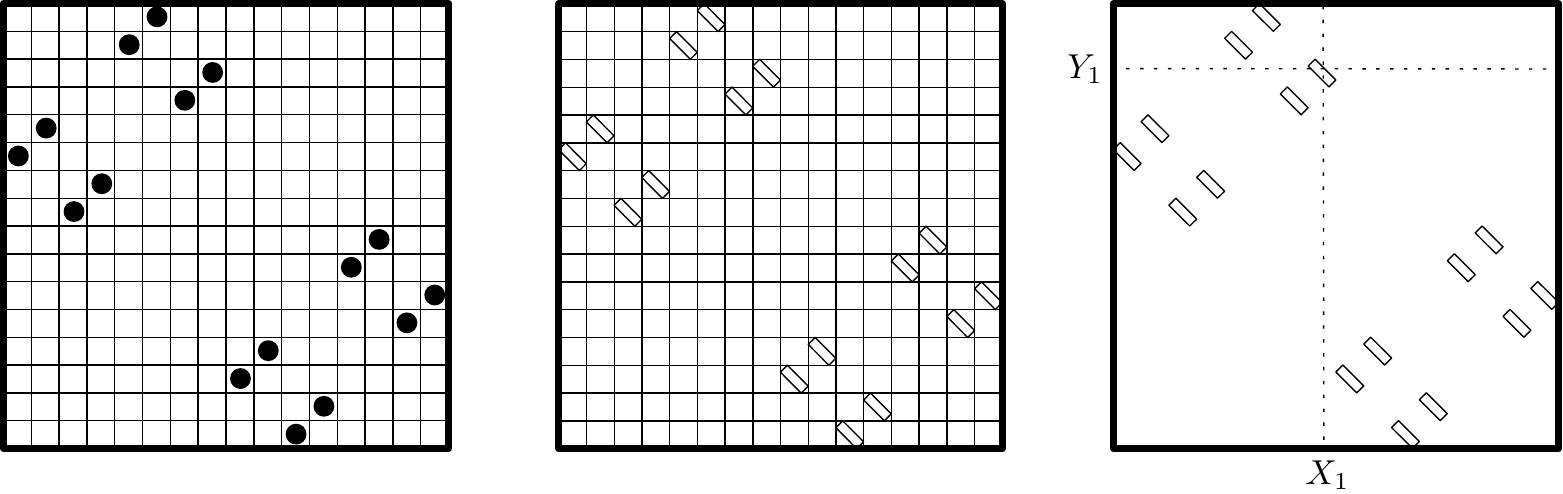}
		\caption{\textbf{Left:} The diagram of a permutation $\eta$ in $\mathcal S^*_{16}$. Note that each point in the diagram has exactly 10 points on its top-left and bottom-right sides. 
			\textbf{Middle:} The diagram of the permutation $\eta$ where each dot has been substituted by a rescaled rectangle $R^{(z)}$ for $z=0.8$.
			\textbf{Right:} The support of the permuton $\mu^*(\eta,z)$.}
		\label{fig:rect_perm2}
	\end{center}
\end{figure}

\begin{proposition}\label{prop:CC_rectangles_perm}
	For all $\eta \in \mathcal S^*$ and $z\in[0,1]$, the permuton $\mu^*=\mu^*(\eta,z)$ is a \cc\ permuton with respect to inversions.
\end{proposition}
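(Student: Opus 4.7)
The plan is to verify Definition~\ref{def:cc_cnc} directly for $\mu^*=\mu^*(\eta,z)$ by computing the conditional inversion probability of a pair of i.i.d.\ samples and showing that the result does not depend on the location of the conditioning point. Write $n=|\eta|$ and let $B_i := [(i-1)/n, i/n]\times [(\eta(i)-1)/n, \eta(i)/n]$ denote the $i$-th box of the diagram of $\eta$. By construction, $\supp(\mu^*)\subseteq \bigcup_{i=1}^n B_i$, each $B_i$ carries $\mu^*$-mass exactly $1/n$, and $n\cdot \mu^*|_{B_i}$ is the push-forward of $\mu^{(z)}$ (from Proposition~\ref{prop:CC_rect_perm}) under the unique orientation-preserving affine bijection $[0,1]^2 \to B_i$.

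Letting $(X_1,Y_1),(X_2,Y_2)\stackrel{i.i.d.}{\sim} \mu^*$, I would condition on the event $\{(X_1,Y_1)\in B_i\}$ (which has probability $1/n$ and exhausts $\supp(\mu^*)$ up to a $\mu^*$-null set of box-boundary intersections) and decompose $\P(\pi_{{\bf X},{\bf Y}}=21 \mid (X_1,Y_1))$ according to which $B_j$ contains $(X_2,Y_2)$. For $j\ne i$, the boxes $B_i$ and $B_j$ are rectangles disjoint in both coordinate projections, so any pair $(X_1,Y_1)\in B_i$, $(X_2,Y_2)\in B_j$ induces the pattern $21$ precisely when $(i,j)$ is an inversion of $\eta$, and induces $12$ otherwise; such $j$ contributes $(1/n)\cdot \mathds{1}\{(i,j)\text{ is an inversion of }\eta\}$ to the conditional probability. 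For $j=i$, the affine bijection sending $B_i$ to $[0,1]^2$ is monotone in each coordinate and hence preserves $h_{21}$, so the conditional probability that an independent sample landing in $B_i$ forms a $21$ with $(X_1,Y_1)$ equals the analogous quantity for $\mu^{(z)}$, namely $z$, by Proposition~\ref{prop:CC_rect_perm}; this contribution is $z/n$.

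Summing and letting $m_i$ denote the number of $j\ne i$ for which $(i,j)$ is an inversion of $\eta$, we obtain $\P(\pi_{{\bf X},{\bf Y}}=21 \mid (X_1,Y_1)) = (m_i+z)/n$ almost surely on $\{(X_1,Y_1)\in B_i\}$. The last step is to invoke the defining property of $\mathcal{S}^*$, illustrated in Figure~\ref{fig:rect_perm2}, which asserts that the number of points of the diagram of $\eta$ forming an inversion with the $i$-th point is a common value $m$ independent of $i$; therefore the conditional probability above reduces to the constant $(m+z)/n$, proving that $\mu^*$ is CC with respect to $\sigma=21$. The one step that needs a moment of care is the within-box computation for $j=i$: Proposition~\ref{prop:CC_rect_perm} is stated only for $\mu^{(z)}$ itself, not its rescalings, but the invariance of $h_{21}$ under a common monotone rescaling of the two coordinates transfers the value $z$ intact to each $B_i$, and this is the only nontrivial input beyond a routine bookkeeping argument.
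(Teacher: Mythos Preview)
Your proposal is correct and takes essentially the same approach as the paper: both decompose the conditional inversion probability into the cross-box contribution (constant equal to $k/n$ by the defining property of $\mathcal S^*$) and the within-box contribution (equal to $z/n$ via the computation in \eqref{eq:always-z}). Your write-up is somewhat more explicit about the decomposition over $j$ and about transferring Proposition~\ref{prop:CC_rect_perm} to each $B_i$ via the monotone affine rescaling, but the argument is the same.
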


\begin{proof}
	Let $\eta \in \mathcal S^*$ of size $n$ and assume that each point in the diagram of $\eta$ has exactly $k$ points on its top-left and bottom-right sides.
	Let $\big((X_i,Y_i)\big)_{i=1,2}\stackrel{i.i.d.}{\sim}\mu^{*}$. 
	Note that 
	\begin{equation}\label{eq:const_prob}
		\P(\pi_{{\bf X},{\bf Y}}=21|(X_1,Y_1))
		=
		\mu^{(z)}\left([0,X_1]\times [Y_1,1]\cup[X_1,1]\times [0,Y_1]\right)=\frac{k}{n}+\frac{z}{n}
	\end{equation}
	where in the last equality we used that there are exactly $k$ rectangles of mass $\frac 1 n$ in $[0,X_1]\times [Y_1,1]\cup[X_1,1]\times [0,Y_1]$ plus a portion or mass $\frac z n$ of the rectangle intersecting the point $(X_1,Y_1)$; for the latter claim we use exactly the argument of \eqref{eq:always-z}.
	Since the expression in \eqref{eq:const_prob} is independent of $(X_1,Y_1)$, the statement follows.
\end{proof}

It looks quite complicated to characterize all the possible \cc\ permutons with respect to inversions. For instance, note that with similar arguments as above, one can show that also the permuton in Figure \ref{ppn:counter} is a \cc\ permuton with respect to inversions and it is not one of the permutons considered in the previous proposition. 

\begin{figure}[ht]
	\begin{center}		\includegraphics[scale=.8]{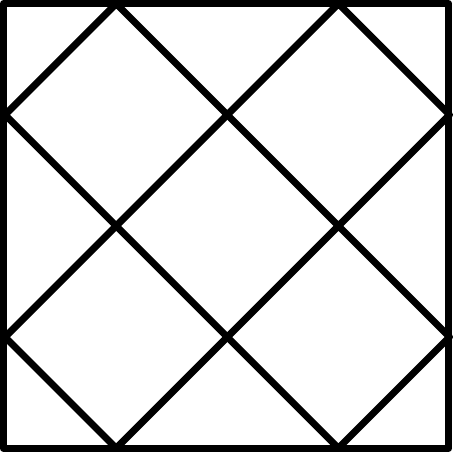}
		\caption{A \cc\ permuton with respect to inversions obtained by distributing Lebesgue measure on the boundaries of the four outer diamonds.}
		\label{fig:rect_perm3}
	\end{center}
\end{figure}

\appendix

\section{Large deviation results} \label{appA}

\begin{lemma}\label{lem:ldp}
	Let $(\mathcal{X},d), (\mathcal{Y},d')$ be metric spaces, equipped with Borel sigma fields $\mathcal{B}, \mathcal{B}'$ respectively. For every $n\ge 1$, let $X_n$ be a random variable taking values in $\mathcal{X}$. Assume that $(X_n)_{n\ge 1}$ satisfies an LDP with speed $n$ and good rate function $J(\cdot)$. Further assume that $T:\mathcal{X}\mapsto \mathcal{Y} $ be a function which is continuous on the set $$\{x\in \mathcal{X}:J(x)<\infty\}.$$ Then the following conclusions hold:
	\begin{enumerate}[(i)]
		\item The minimizers of $J(\cdot)$ over $\mathcal{X}$ are attained on a compact set $\mathcal F$, and $$d(X_n,\mathcal F)\xrightarrow{P}0.$$

		\item The sequence $T(X_n)$ satisfies an LDP on $\mathcal{Y}$ with speed $n$ and good rate function
		\begin{eqnarray*}
			\begin{split}
				J_1(x):=
				\begin{cases}
					\inf_{x\in\mathcal{X}:T(x)=t}J(x)&\text{ if }t\in T(\mathcal X),\\
					\infty& \text{ otherwise }.
				\end{cases}
			\end{split}
		\end{eqnarray*} 
		where the infimum over the empty set is taken to be $+\infty$ by convention.
		
		\bigskip
		
		\noindent We now further assume that $\mathcal{Y}=\R$ with $\mathcal{B}'=\mathcal{B}(\R)$ and that $T$ is bounded. Then we also have the following:
		
		\bigskip
		
		\item The following equation holds:
		$$\lim_{n\to\infty}\frac{1}{n}\log \E\Big[e^{nT(X_n)}\Big]=\sup_{x\in \mathcal{X}}\{T(x)-J(x)\}.$$
		
		\item Define a Gibbs measure $\mathcal{Q}_n$ on $\mathcal{X}$ by setting for all $B\in\mathcal{B}$,
		$$\mathcal{Q}_n(B):=\frac{\E\Big[ e^{nT(X_n)} \mathds{1}\{X_n\in B\}\Big]}{\E \Big[e^{nT(X_n)}\Big]}.$$
		If $Y_n\sim \mathcal{Q}_n$, then $Y_n$ satisfies an LDP with speed $n$ and good rate function $J_2$, given by $$J_2(x):=J(x)-T(x)-\inf_{y\in \mathcal{X}}\{J(y)-T(y)\}.$$
		
		\item Suppose $U\subset \R$ is open, such that $$\inf_{x\in \mathcal{X}:T(x)\in U}J(x)=\inf_{x\in \mathcal{X}: T(x)\in \bar{U}}J(x).$$
		Then, conditioned on the event $\{T(X_n)\in \bar{U}\}$, $X_n$ satisfies an LDP with speed $n$ and good rate function $J_3$, given by
		\begin{eqnarray*}
			\begin{split}
				J_3(x):=
				\begin{cases}
					J(x)-\inf_{y\in \mathcal{X}:T(y)\in \bar{U}}J(y)&\text{ if }T(x)\in \bar{U},\\
					\infty& \text{ otherwise }.
				\end{cases}
			\end{split}
		\end{eqnarray*}
	\end{enumerate}
\end{lemma}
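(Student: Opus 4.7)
The plan is to prove parts $(i)$--$(v)$ in order, with each later part either being a classical large deviations result (contraction principle, Varadhan's lemma, tilted LDP, conditioned LDP) or a straightforward consequence of the earlier parts. Throughout, the key structural fact I will lean on is that $J$ is a good rate function, so every sublevel set $\{x\in\mathcal X : J(x)\le \alpha\}$ is compact, and that $T$ is continuous precisely where $J$ is finite, which is exactly the region that matters for the large deviation upper/lower bounds.

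For part $(i)$, first I would observe that $m:=\inf_{x\in \mathcal X} J(x)$ is attained on $\mathcal F=\{J=m\}$, which is a closed subset of the compact level set $\{J\le m+1\}$, hence compact. For the convergence in probability, fix $\varepsilon>0$ and consider the closed set $F_\varepsilon:=\{x:d(x,\mathcal F)\ge \varepsilon\}$. I would argue by contradiction using the goodness of $J$: if $\inf_{F_\varepsilon} J = m$, then a minimizing sequence would have a cluster point in $F_\varepsilon$ (by compactness of the relevant sublevel sets) on which $J$ equals $m$ by lower semicontinuity, contradicting $F_\varepsilon\cap\mathcal F=\varnothing$. So $\inf_{F_\varepsilon}J>m$, and the LDP upper bound gives $\P(X_n\in F_\varepsilon)\to 0$ exponentially. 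For part $(ii)$, I would apply the standard contraction principle (see e.g.\ \cite[Theorem 4.2.1]{DZ}); the only wrinkle is that $T$ is continuous only on $\{J<\infty\}$, but since $J$ is infinite outside this set, one can use the version that only needs continuity on $\{J<\infty\}$, or equivalently first restrict to the effective domain and then extend.

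For part $(iii)$, Varadhan's lemma applies directly because $T$ is bounded and continuous on $\{J<\infty\}$; the tail condition (moment estimate on $e^{n\alpha|T|}$) is trivial from boundedness. So $\tfrac{1}{n}\log\E[e^{nT(X_n)}]\to\sup_{x\in\mathcal X}\{T(x)-J(x)\}$. For part $(iv)$, I would write for any Borel set $B\subset \mathcal X$
\[
\frac{1}{n}\log \mathcal{Q}_n(B)=\frac{1}{n}\log\E\!\left[e^{nT(X_n)}\mathds{1}\{X_n\in B\}\right]-\frac{1}{n}\log \E[e^{nT(X_n)}],
\]
and apply the Laplace-type upper/lower bounds from Varadhan's lemma restricted to closed/open sets (together with part $(iii)$ for the denominator). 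This yields that $Y_n$ satisfies an LDP with rate function $J(x)-T(x)-\inf\{J-T\}$.

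Finally, for part $(v)$, I would proceed directly: for any closed $C\subset\mathcal X$, the conditional probability satisfies
\[
\P(X_n\in C\mid T(X_n)\in \bar U)=\frac{\P(X_n\in C\cap T^{-1}(\bar U))}{\P(T(X_n)\in\bar U)},
\]
and similarly for open sets. Applying the upper bound to the numerator and the lower bound to the denominator (using that $T^{-1}(\bar U)$ is closed and $T^{-1}(U)$ is open by continuity of $T$ on $\{J<\infty\}$), and invoking the hypothesis $\inf_{T(x)\in U}J(x)=\inf_{T(x)\in \bar U}J(x)$ to match the two infima, one recovers the stated LDP with rate function $J_3$. The main subtlety—and likely the only step worth double-checking carefully—is the interplay between continuity of $T$ only on $\{J<\infty\}$ and the topological preimages needed for the bounds in $(v)$; this is handled by noting that in the large deviation estimates one may intersect all sets with $\{J\le \alpha\}$ for large $\alpha$ (at negligible cost), on which $T$ is genuinely continuous.
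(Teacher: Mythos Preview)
Your proposal is correct and follows essentially the same route as the paper for parts $(i)$, $(ii)$, $(iii)$, and $(v)$: goodness of $J$ plus the LDP upper bound for $(i)$, the contraction principle (in its version requiring continuity only on $\{J<\infty\}$) for $(ii)$, Varadhan's lemma for $(iii)$, and a direct numerator/denominator argument for $(v)$, with the topological subtlety about $T^{-1}(U)$ and $T^{-1}(\bar U)$ correctly flagged. The paper handles that subtlety in $(v)$ slightly differently---rather than intersecting with sublevel sets, it verifies directly that $\inf_{\bar B}J=\inf_B J$ and $\inf_{C^\circ}J=\inf_C J$ for the relevant sets---but this is the same idea in different clothing.

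The one genuine divergence is part $(iv)$. You propose to obtain the tilted LDP by applying Laplace-type upper and lower bounds to $\tfrac1n\log\E[e^{nT(X_n)}\mathds{1}\{X_n\in B\}]$ for closed and open $B$ respectively, subtracting the free energy from part $(iii)$. This works, but requires carrying out a ``localized Varadhan'' argument. The paper instead takes the Bryc route: it computes, for an arbitrary bounded continuous $R$, the limit $\tfrac1n\log\E_{\mathcal Q_n}[e^{nR(Y_n)}]$ by applying part $(iii)$ to $R+T$ and to $T$ separately, and then invokes Bryc's inverse Varadhan lemma \cite[Theorem 4.4.13]{DZ} to conclude the LDP. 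The paper's approach trades the localized Laplace computation for a verification that $J-T$ is a good rate function (needed for Bryc), which it does via compactness of the sublevel sets of $J$ and continuity of $T$ there. Both arguments are standard and of comparable length; the Bryc approach has the mild advantage of reusing part $(iii)$ as a black box rather than reopening its proof.
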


\begin{proof}
	
	$(i)$ The fact that the minimizers of the good rate function are attained on a compact set $\mathcal F$ follows from standard analysis. To show convergence in probability, fixing $\varepsilon>0$, note that the set $\{x:d(x,\mathcal F)\ge \varepsilon\}$ is closed. Thus using the LDP, we get
	\begin{align*}
		\limsup_{n\to\infty}\frac{1}{n}\log\P(d(X_n,\mathcal F)\ge \varepsilon)\le -\inf_{x:d(x,\mathcal F)\ge \varepsilon}J(x).
	\end{align*}
	To show that $d(X_n,\mathcal F)\xrightarrow{P}0$, it suffices to show that $\inf_{x:d(x,\mathcal F)\ge \varepsilon}J(x)>0$. Suppose by way of contradiction we have $$\inf_{x:d(x,\mathcal F)\ge \varepsilon}J(x)=0.$$
	Since $J$ is a good rate function, the infimum on any closed set is attained at a point $x_0$, say, with $d(x_0,\mathcal F)\ge\varepsilon$. But $x_0$ is a global optimizer of $J(\cdot)$, a contradiction as $x_0\notin \mathcal F$.
	
	\medskip

	$(ii)$ Since $T$ is continuous on the set $\{x\in\mathcal{X}:J(x)<\infty\}$, the LDP for $T(X_n)$ follows on invoking the contraction principle (\cite[Thm 4.2.1]{DZ}). The theorem is stated for continuous functions, but the proof applies to functions which are continuous on the set $\{x\in\mathcal{X}:J(x)<\infty\}$ (see remark (c) following the theorem).
	
	\medskip
	
	$(iii)$ By part $(ii)$ above, the random variable $T(X_n)$ satisfies an LDP with the good rate function $J_1$. Since $T$ is also bounded, invoking Varadhan's Lemma, (\cite[Thm 4.3.1]{DZ}) we get
	\[\lim_{n\to\infty}\frac{1}{n}\log \E e^{nT(X_n)}= \sup_{t\in \R}\{t-J_1(t)\}.\]
	Since
	\[\sup_{x\in \mathcal{X}}\{T(x)-J(x)\}=\sup_{t\in \mathbb{R}} \sup_{x\in \mathcal{X}:T(x)=t} \{T(x)-J(x)\}=\sup_{t\in \mathbb{R}} \left\{t-\inf_{x\in \mathcal{X}:T(x)=t}J(x)\right\}=\sup_{t\in \R}\{t-J_1(t)\},\]
	the desired conclusion follows.
	
	\medskip
	
	$(iv)$ Let $R:\mathcal{X}\mapsto \R$ be a bounded continuous function. Then we have
	\begin{equation*}
		\E_{\mathcal{Q}_n} \left[ e^{n R(Y_n)}\right]=\frac{ \E e^{nR(X_n)+nT(X_n)}}{\E e^{nT(X_n)}}.
	\end{equation*}
	By part $(iii)$ applied to the functions $R+T$ and $T$ respectively, we have
	$$\lim_{n\to\infty}\frac{1}{n}\log  \E_{\mathcal{Q}_n} \left[ e^{nR(Y_n)}\right]=\sup_{x\in \mathcal{X}}\{R(x)+T(x)-J(x)\}-\sup_{x\in \mathcal{X}}\{T(x)-J(x)\}.$$
	By Bryc's inverse Varadhan Lemma (\cite[Thm 4.4.13]{DZ}), it follows that $Y_n$ satisfies the desired LDP. To invoke the theorem, one needs to check that $x\mapsto J(x)-T(x)$ is a good rate function, i.e., its level sets are compact. Fixing $\alpha<\infty$, set
	$A_\alpha:=\{x\in \mathcal{X}:J(x)-T(x)\le \alpha\}$, and let $(x_k)_{k\ge 1}$ be a sequence in $A_\alpha$. Note that
	$$A_\alpha\subseteq \{x\in \mathcal{X}:J(x)\le \|T\|_\infty+\alpha\}=:M_{\alpha}$$
	where $M_{\alpha}$ is compact since $J$ is a good rate function. Thus the sequence $(x_k)_{k\ge 1}$ has a subsequence which converges to $x_0\in M_{\alpha}$. To show compactness via sequential compactness, it suffices to show that $x_0\in A_\alpha$. But this follows on noting that
	$$\liminf_{k\to\infty}J(x_k)\ge J(x_0)\quad\text{ and }\quad \lim_{k\to\infty}T(x_k)=T(x_0).$$
	Here the first limit follows from lower semi-continuity of $J(\cdot)$, and the second limit follows from  the fact that $J(x_0)<\infty$, as $x_0\in M_{\alpha}$, along with the assumption that $T$ is continuous on the set $\{x\in \mathcal{X}:J(x)<\infty\}$.
	
	\medskip
	
	$(v)$ Let $A$ be any subset of $\mathcal{X}$ in $\mathcal{B}$. To show an LDP for the conditional measure, one needs to show that
	\begin{align}
		\label{eq:ldp_upper}\limsup_{n\to\infty}\frac{1}{n}\log  \P(X_n\in A|T(X_n)\in \bar{U})\le &-\inf_{x\in A, T(x)\in \bar{U}}J(x)+\inf_{x:T(x)\in \bar{U}}J(x),\text{ if $A$ is closed},\\
		\label{eq:ldp_lower}\liminf_{n\to\infty}\frac{1}{n}\log  \P(X_n\in A|T(X_n)\in \bar{U})\ge &-\inf_{x\in A, T(x)\in \bar{U}}J(x)+\inf_{x:T(x)\in \bar{U}}J(x),\text{ if $A$ is open}.
	\end{align}
	We show \eqref{eq:ldp_upper}, omitting the proof of \eqref{eq:ldp_lower}. To this effect, note that
	\begin{align*}
		\P(X_n\in A|T(X_n)\in \bar{U})=\frac{\P(X_n\in A, T(X_n)\in \bar{U})}{\P(T(X_n)\in \bar{U})}\le \frac{\P(X_n\in A, T(X_n)\in \bar{U})}{\P(T(X_n)\in U)}
	\end{align*}
	which on taking $\log$, dividing by $n$, and taking limits gives
	\begin{align}\label{eq:upper1}
		\limsup_{n\to\infty}\frac{1}{n}\log \P(X_n\in A|T(X_n)\in \bar{U})\le -\inf_{x\in \bar{B}}J(x)+\inf_{x\in C^\circ}J(x)
	\end{align}
	where $$B:=\{x\in A:T(x)\in \bar{U}\}, \quad C:=\{x\in \mathcal{X}:T(x)\in U\}.$$ 
	We now claim that if $A$ is closed, then we have
	\begin{align}\label{eq:upper2}
		\inf_{x\in \bar{B}}J(x)=\inf_{x\in B}J(x),\quad \inf_{x\in C^\circ}J(x)=\inf_{x\in C}J(x).
	\end{align}
	Given \eqref{eq:upper1} and \eqref{eq:upper2}, we get
	\begin{align*}
		\limsup_{n\to\infty}\frac{1}{n}\log \P(X_n\in A|T(X_n)\in \bar{U})\le -\inf_{x\in A:T(x)\in \bar{U}}J(x)+\inf_{x\in \mathcal{X}:T(x)\in U}J(x).
	\end{align*}
	From this \eqref{eq:ldp_upper} follows on using the assumption 
	$$\inf_{x\in \mathcal{X}:T(x)\in U}J(x)=\inf_{x\in \mathcal{X}:T(x)\in \bar{U}}J(x).$$ 
	It thus remains to verify \eqref{eq:upper2}, which we carry out below.
	
	\medskip
	
	\underline{Proof of $\inf_{x\in \bar{B}}J(x)=\inf_{x\in B}J(x)$.}  Since $B\subseteq \bar{B}$, we have $\inf_{x\in \bar{B}}J(x)\le \inf_{x\in B}J(x)$. Suppose the inequality is strict. Then we must have $\inf_{x\in \bar{B}}J(x)<\infty$.  Let $x_0\in \bar{B}/B$ be a point  where the infimum is attained (such an $x_0$ exists as $J(\cdot)$ is a good rate function and $\bar{B}$ is closed). Since $x_0\in \bar{B}$, there exists a sequence $(x_k)_{k\ge 1}\in B$ converging to $x_0$. But then $T(x_k)\in \bar{U}$ and $x_k\in A$, which, using the continuity of $T$ at $x_0$ and the closed-ness of $A$ and $\bar{U}$, implies that $T(x_0)\in \bar{U}$ and $x_0\in A$. This implies $x_0\in B$, a contradiction. This shows that $\inf_{x\in \bar{B}}J(x)=\inf_{x\in B}J(x)$, as desired.
	
	\medskip
	
	\underline{Proof of $\inf_{x\in C^\circ}J(x)=\inf_{x\in C}J(x)$:} Since $C^\circ\subseteq C$, we have $\inf_{x\in C}J(x)\le \inf_{x\in C^\circ}J(x)$. Suppose the inequality is strict. Then there exists $x_0\in C/C^\circ$ such that $J(x_0)<\inf_{x\in C^\circ}J(x)$. Since $x_0\notin C^\circ$, there exists a sequence $(x_k)_{k\ge 1}$ with $x_k\in C^c$ converging to $x_0$. But then we have $T(x_k)\in U^c$, which along with the continuity of $T$ at $x_0$ gives $T(x_0)\in U^c$. Thus $x_0\in C^c$, a contradiction. 
\end{proof}

\section{Other supporting lemmas} \label{appB}

\begin{lemma}\label{lem:dmat} Let $\nu\in \mathcal{M}$. 
	\begin{enumerate}[(i)]
		\item Set $A_x:=([0,x]\times [x,1])\cup ([x,1]\times [0,x])$ for $x\in [0,1]$, and 
		$\gamma:=\mathcal{O}(\nu)$. Then we have
		\begin{align}\label{eq:gamma}
			\gamma\left(A_{\nu(D_{11})}\right)\le 4\cdot \nu(D_{12}\cup D_{21}).
		\end{align}
		\item Suppose $\widetilde{\nu}$ is the push-forward of the measure $\nu$ under the map $T(x,y)=(1-x,1-y)$. If
		\begin{align}\label{eq:dmat}
			\nu(D_{11})-\nu(D_{22})>8\cdot\nu(D_{12}\cup D_{21}),
		\end{align} 
		we have $\mathcal{O}(\nu)\ne \mathcal{O}(\widetilde{\nu})$.
	\end{enumerate}
	
\end{lemma}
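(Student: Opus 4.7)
For part (i), my plan is to translate the claim into $\nu$-coordinates via the marginal CDFs. Set $a:=\nu(D_{11})$, $b:=\nu(D_{12})$, $c:=\nu(D_{21})$, and write $F_1,F_2$ for the marginal CDFs of $\nu$. Since $F_1(1/2)=a+b\ge a$ and $F_1$ is continuous and non-decreasing, there exists $x^\star\le 1/2$ with $F_1(x^\star)=a$; symmetrically $y^\star\le 1/2$ with $F_2(y^\star)=a$. Recognising $\mathcal{O}$ as the pushforward by $(x,y)\mapsto(F_1(x),F_2(y))$, I would then identify $\gamma([0,a]\times[a,1])=\nu([0,x^\star]\times[y^\star,1])$. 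Splitting the interval $[y^\star,1]=[y^\star,1/2]\cup[1/2,1]$ and bounding each piece by the corresponding marginal slab --- namely $\nu([0,1]\times[y^\star,1/2])=F_2(1/2)-F_2(y^\star)=c$ and $\nu([0,1/2]\times[1/2,1])=b$ --- gives $\gamma([0,a]\times[a,1])\le b+c$. The symmetric bound for the other anti-diagonal rectangle follows identically, yielding $\gamma(A_a)\le 2(b+c)\le 4(b+c)$.

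For part (ii), I would argue by contradiction. Assume $\gamma:=\mathcal{O}(\nu)=\mathcal{O}(\widetilde\nu)$, and set $d:=\nu(D_{22})$. Since $T$ swaps the quadrants $D_{11}\leftrightarrow D_{22}$ and $D_{12}\leftrightarrow D_{21}$, one has $\widetilde\nu(D_{11})=d$ and $\widetilde\nu(D_{12}\cup D_{21})=b+c$; part (i) applied separately to $\nu$ and to $\widetilde\nu$ therefore yields $\gamma(A_a)\le 4(b+c)$ and $\gamma(A_d)\le 4(b+c)$. Using the uniform marginals of $\gamma$, the first bound produces $\gamma([0,a]^2)\ge a-4(b+c)$, and the second produces $\gamma([0,d]^2)\ge d-4(b+c)$. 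Because $[d,a]\times[0,d]\subset A_d$ and $[d,a]\times[a,1]\subset A_a$, I would also extract $\gamma([d,a]\times[0,d])\le 4(b+c)$ and $\gamma([d,a]\times[a,1])\le 4(b+c)$; the marginal identity $\gamma([d,a]\times[0,1])=a-d$ would then pin down
\[
\gamma([d,a]^2)\ge (a-d)-8(b+c),
\]
a positive quantity by hypothesis.

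The final step of my plan is to carry out the symmetric accounting on the transposed row $[0,1]\times[d,a]$, and then play the resulting four inequalities off the uniform-marginal identities at the two scales $[0,d]$ and $[0,a]$ simultaneously. The contradiction should come from comparing the forced lower bound on $\gamma([d,a]^2)$ against the upper bound extracted from the joint constraint that $\gamma$ must satisfy \emph{both} of the anti-diagonal inequalities (at scales $a$ and $d$) under the assumption $\mathcal{O}(\nu)=\mathcal{O}(\widetilde\nu)$. I expect this last quantitative comparison to be the main obstacle: the individual mass bounds listed above are internally consistent, so the inconsistency must emerge from pairing the two ``base-measure perspectives'' (from $\nu$ and from $\widetilde\nu$) with the uniform-marginal identities in a way that exhausts precisely the $8(b+c)$ slack encoded in the hypothesis $\nu(D_{11})-\nu(D_{22})>8\cdot\nu(D_{12}\cup D_{21})$.
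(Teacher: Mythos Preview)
Your argument for part~(i) is correct and takes a different route from the paper's. The paper bounds $\gamma([0,\nu(D_{11})]^2)$ from below using the uniform marginals of $\gamma$ together with the identity $\gamma\big([0,F_1(\tfrac12)]\times[0,F_2(\tfrac12)]\big)=\nu(D_{11})$, and then reads off the bound on $\gamma(A_{\nu(D_{11})})$ from there. Your direct pullback through the marginal CDFs is cleaner and in fact yields the sharper constant $2$ in place of $4$.

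For part~(ii), your suspicion that ``the individual mass bounds listed above are internally consistent'' is exactly right, and no amount of further bookkeeping will close the argument, because part~(ii) is false as stated. Take $\nu$ supported on the main diagonal of $[0,1]^2$, placing mass $a$ on $\{(t,t):t\in[0,\tfrac12]\}$ and mass $d$ on $\{(t,t):t\in[\tfrac12,1]\}$ with $a>d>0$, $a+d=1$. Then $\nu\in\mathcal{M}$, $\nu(D_{11})=a$, $\nu(D_{22})=d$, $\nu(D_{12}\cup D_{21})=0$, so the hypothesis $a-d>0$ is satisfied; yet both $\mathcal{O}(\nu)$ and $\mathcal{O}(\widetilde\nu)$ equal the identity permuton, since each measure is supported on a strictly increasing curve. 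The paper's own proof of~(ii) contains the matching error: it implicitly uses the inclusion $[\nu(D_{22}),\nu(D_{11})]\times[0,1]\subseteq A_{\nu(D_{11})}\cup A_{\nu(D_{22})}$, which fails on the sub-square $[\nu(D_{22}),\nu(D_{11})]^2$. In the applications (Propositions~\ref{prop:counter} and~\ref{prop:xi}, Theorem~\ref{ppn:counter}) the optimizers carry genuine anti-diagonal structure and the desired conclusion $\mathcal{O}(\nu)\ne\mathcal{O}(\widetilde\nu)$ does hold there, but it needs a direct verification rather than an appeal to this lemma.
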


\begin{proof} 
	$(i)$ For convenience, set $\nu(D_{12})=a, \nu(D_{22})=b$, $\nu(D_{11})=c$ and $\nu(D_{21})=d$. 
	Observe that by definition of $\gamma=\mathcal{O}(\nu)$ (recall \eqref{eq:map_proj})
	\begin{align*}
		\gamma\left([0,c+a+d]^2\right) \ge \gamma\left([0,c+a]\times [0,c+d]\right) = c.
	\end{align*}
	Since $\gamma$ has uniform marginals,
	\begin{align*}
		\gamma\left([0,c]^2\right)\ge \gamma\left([0,c+a+d]^2\right)-2(a+d) \ge c-2(a+d).
	\end{align*}
	However, $\gamma([0,c]\times [0,1])=c$. Thus $\gamma([0,c]\times [c,1])\le 2(a+d)$. Similarly $\gamma([c,1]\times [0,c])\le 2(a+d)$. Thus, $\gamma(A_c)\le 4(a+d),$ as desired. 
	
	\medskip
	
	$(ii)$ Assume for the sake of contradiction that $\gamma=\mathcal{O}(\nu)= \mathcal{O}(\widetilde\nu)$. Then applying part $(a)$ we have
	\begin{align*}
		\gamma\left(A_{\nu(D_{11})}\cup A_{\nu(D_{22})}\right) \le 8\cdot \nu(D_{12}\cup D_{21}).
	\end{align*}
	But $\nu(D_{11})-\nu(D_{22})>0$, and so $$\gamma\left(A_{\nu(D_{11})}\cup A_{\nu(D_{22})}\right)\ge  \gamma\left([\nu(D_{11}),\nu(D_{22})]\times [0,1]\right)=\nu(D_{11})-\nu(D_{22}).$$
	Combining the last two displays contradict \eqref{eq:dmat}, and so the proof is complete.
\end{proof}

\bibliographystyle{alpha}		
\bibliography{pp}
\end{document}